\documentclass[11pt]{article}
\usepackage[top=1in, bottom=1in, left=1.25in, right=1.25in]{geometry}
\usepackage{amsmath,amssymb,amsthm,xcolor,enumerate,bbm}
\usepackage[unicode,breaklinks=true,colorlinks=true]{hyperref}

\usepackage{theoremref}
\usepackage{mathrsfs}
\usepackage[]{authblk}
\usepackage{comment}
\usepackage{cancel}
\usepackage{graphicx}

\numberwithin{equation}{section}
\newtheorem{thm}{Theorem}[section]
\newtheorem{cor}{Corollary}[section]
\newtheorem{lem}[thm]{Lemma}
\newtheorem{prop}{Proposition}[section]

\theoremstyle{remark}
\newtheorem{remark}{Remark}[section]
\theoremstyle{definition}

\newcommand{\R}{\mathbb{R}}

\renewcommand{\div}{\mathop{\rm div}\nolimits}

\newcommand{\lec}{{\ \lesssim \ }}
\newcommand{\gec}{{\ \gtrsim \ }}

\newcommand{\EQS}[1]{\begin{equation}\begin{split} #1 \end{split}\end{equation}}
\newcommand{\EQs}[1]{\begin{equation*}\begin{split} #1 \end{split}\end{equation*}}

\allowdisplaybreaks
\begin{document}

\title
{Quantitative regularity for the MHD equations via the localization technique in frequency space}

\author[1]{\rm Baishun Lai}
\author[2]{\rm Shihao Zhang}
\affil[1]{\footnotesize MOE-LCSM, School of Mathematics and Statistics, Hunan Normal University, Changsha, Hunan 410081, P. R. China}
\affil[2]{\footnotesize School of Mathematics and Statistics, Hunan Normal University, Changsha, Hunan 410081, P. R. China}

\date{}
\maketitle

\begin{abstract}
In this paper, we employ the localization technique in frequency space developed by Tao in \cite{MR4337421} to investigate the quantitative estimates for the MHD equations. With the help of quantitative Carleman inequalities given by Tao in \cite{MR4337421} and the pigeonhole principle, we establish the quantitative regularity for the critical $L^3$ norm bounded solutions which enables us  explicitly quantify the blow-up behavior in terms of $L^3$ norm near a potential first-time singularity. Some technical innovations, such as introducing the corrector function, are required due to the fact that the scales are inconsistent between the magnetic field and the vorticity field.

\medskip

\emph{Key words}:  Blow-up rate; Quantitative analysis; Carleman inequalities; Critical space; Pigeonhole principle.

\medskip

\emph{2010 Mathematics Subject Classifications}: 35B44, 35B65, 35Q30, 76D03.
\end{abstract}

\renewcommand{\baselinestretch}{0.8}\normalsize
\tableofcontents
\renewcommand{\baselinestretch}{1.0}\normalsize

\section{Introduction and main results}
The purpose of this article is to study the
explicit quantification of the blow-up rate with respect to time
of the critical norm near a potential singularity for the Magneto-Hydrodynamics (MHD for short) equations, which is a branch of continuum mechanics that examines the flow of electrically conducting fluids under the influence of magnetic fields. Due to its numerous practical applications, such as magnetic separation and targeted delivery of drugs or radioisotopes via magnetic guidance, research on magnetohydrodynamics has attracted widespread attention. The movement of conductive fluids (such as liquid metal mercury, liquid sodium, and plasma) in a magnetic field is a result of the coupling of these two fundamental forces. Firstly, fluid motion generates an electric current that modifies the existing magnetic field. Secondly, the interaction between the current and the magnetic field produces a mechanical force in the fluid, accelerating it in a direction perpendicular to both the magnetic field and the current. Therefore, in mathematical physics, the MHD equations are derived by coupling the Navier-Stokes equations from fluid mechanics with Maxwell's equations from electromagnetic fields. For more physical background on the MHD equation, we refer the interested readers
to the reference \cite{MR1393572, MR98556, MR346289}. In three-dimensional space, the incompressible MHD equations on $(0,\infty)\times \R^3$ can be expressed as:
\begin{equation}\label{mhdeq}
\begin{cases}
\partial_{t}v-\Delta v+(v\cdot\nabla)v+\nabla\Pi=(H\cdot\nabla)H,\\
\partial_{t}H-\Delta H+(v\cdot\nabla)H-(H\cdot\nabla)v=0, \\
\div v=0,\quad\div H=0,
\end{cases}
\end{equation}
where $v: (0,\infty) \times \mathbb{R}^3 \to \R^3$, $H: (0,\infty) \times \mathbb{R}^3 \to \mathbb{R}^3$, $\Pi\triangleq \pi+\frac{|H|^2}{2}$, and $\pi: (0,\infty) \times \R^3 \to \R$  correspond to
the velocity field, magnetic field and pressure of the fluid, respectively. We note that, in the case $H\equiv0$, \eqref{mhdeq} reduces to the incompressible Navier-Stokes equation (NSE for short)
\begin{equation}\label{nseq}
\begin{cases}
\partial_{t}v+(u\cdot\nabla)v-\Delta v+\nabla \pi=0,\quad \text{in} \quad (0,\infty)\times \R^3\\
\div v=0,
\end{cases}
\end{equation}
which has a simple form but rich mathematical structure.  The above equation $\eqref{mhdeq}_1$ can be interpreted as the NSE \eqref{nseq} perturbed by an external force term $(H\cdot\nabla) H$, which is controlled by the linear equation $\eqref{mhdeq}_2$. Therefore, before describing our contribution, we first find it instructive to review the research progress on the singular solutions of Navier-Stokes equation \eqref{nseq}.

\subsection{Research progress of $3$D incompressible Navier-Stokes equations}

The research on singular solutions to the incompressible NSE \eqref{nseq} in $\R^3$ can be traced back to Leray's pioneering work in \cite{MR1555394}. In \cite{MR1555394}, Leray used the principle of extremal values for integral equations to prove that if the existence interval of a smooth solution $v$ is a finite interval $(0,T)$, where the first blow-up time of the solution is $t=T$, then it must exhibit the following blow-up behavior:
\begin{align}\label{ns.quan1}
\|v(t)\|_{L_x^p(\R^3)}\geq\frac{c(p)}{(T-t)^{\frac12(1-\frac3p)}},\quad p\in(3,\infty],
\end{align}
with some $c(p)$  depending only on $p$. However, it, for the critical case $p=3$,  is extremely complicated. In fact, the qualitative analysis about the $L^3(\R^3)$ norm of  potential singularity solutions  at the maximum existence time $T$  has been a well-known open problem for a long time, where the time $T$ is finite,
i.e. whether
\begin{align}\label{ns.case1}
\limsup\limits_{t\rightarrow T}\|v(t)\|_{L_x^3(\R^3)}=\infty
\end{align}
holds true or equivalently whether
\begin{align}\label{ns.case2}
v\in L^\infty(0,T; L^3(\R^3)) \Rightarrow v~\text{is~regular~at}~t=T.
\end{align}
 One of the important reasons for the complexity of this situation is that the so-called \emph{concentrated compaction} phenomenon might happen, that is, the condition $u\in L^\infty(0,T; L^3(\R^3))$ does not guarantee the fact that for any $ \varepsilon>0$, there exists $\Omega\subset \R^3$ makes $\|u\|_{L^\infty(0,T; L^3(\Omega))}<\varepsilon$. As a result of this phenomenon, one cannot directly prove  \eqref{ns.case1} or \eqref{ns.case2} is true by exploiting the usual regularity estimate of heat operator (for example the $L^p$-$L^q$ estimate). Until $2003$, Escauriaza, Seregin and \v{S}ver\'{a}k in their celebrated paper \cite{MR2005639}  suppressed  the concentration by using the rescaling procedure and  a backward uniqueness for parabolic operators and showed \eqref{ns.case1} is true.
 Precisely, the authors in \cite{MR2005639} assumed that \eqref{ns.case1} is not valid and then obtained a blow-up sequence via using Navier-Stokes rescaling\footnote{ if $(v,\pi)$ is the solution of the system \eqref{nseq}, then for any $\lambda>0$ the functions
\begin{align*}
 v^\lambda(t,x) \triangleq \lambda v(\lambda^2 t, \lambda x),~~\pi^\lambda(t,x) \triangleq \lambda^2 \pi(\lambda^2 t, \lambda x)
 \end{align*}
is also a solution of the system \eqref{nseq}.}. By using compactness techniques, they showed that the blow-up sequence converges to a nontrivial solution of the system \eqref{nseq}, often referred to as the limiting solution which satisfies the differential inequality corresponding to the heat operator due to Caffarelli-Kohn-Nirenberg type spatial localization technique \cite{MR673830}. Then, by employing the well-known Carleman inequality and the backward uniqueness of parabolic operators, they proved that the limiting solution must be zero. This is a contradiction. Throughout this proof, the localization technique in physical space and the Carleman inequality for parabolic operators play an important roles. Subsequently, the results of \cite{MR2005639} were extended to various critical space,
and the specific details can be found in \cite{MR3713543, MR3475661} and related  references. It is worth noting that the results mentioned in \cite{MR2005639, MR3475661}, and \cite{MR3713543} are qualitative and their proofs are derived by contradiction and compactness arguments.

It is natural to ask: for critical cases, is there a quantitative description of the singularity behavior of solutions similar to \eqref{ns.quan1}?  In a recent celebrated paper \cite{MR4337421}, Tao developed  a localization technique in frequency space and a quantitative version of the Carleman inequality from which he, by combining the pigeonhole principle, derived a explicit quantitative estimate for solutions of the NSE \eqref{nseq} belonging to the critical space $L^\infty(0,T; L^3(\R^3))$. As a result of this quantitative estimate, Tao showed in \cite{MR4337421} that if a finite energy solution $v$ (with Schwartz class initial data) that first loses smoothness at $T_\ast>0$ then
\begin{align}\label{ns.bur}
 \limsup_{t \to T_*^-} \frac{\|v(t)\|_{L^3_x(\R^3)}}{(\log\log\log( \frac{1}{T_*-t}))^c} = +\infty.
 \end{align}
with some small enough constant $c>0$.

 To illuminate the motivations of this paper in detail, we sketch Tao's strategy as follows (see Section 6 in \cite{MR4337421} for details). Assume that $u$ is a classical solution to the NSE \eqref{nseq} in $(0,T]\times\R^3$ satisfies
$$\|v\|_{L^\infty_t L^3_x((0,T) \times \R^3)} \leq A,$$
 with some sufficiently large absolute constant $A>C_0\gg 1$,  and there exists a universal constant $\varepsilon_0$ such that if the frequency is satisfied $N\geq N^\ast$ we have
$$N^{-1}\|P_N v\|_{L^\infty_t L^\infty_x([\frac T2,T] \times \R^3)}<\varepsilon_0,
$$
then $\|v\|_{L^\infty_t L^\infty_x([\frac78 T,T] \times \R^3)}$ can be estimated explicitly in terms of $A$ and $N^\ast$. Thus, the key to the problem is to find an upper bound on $N^\ast$. Employing the localization technique in frequency space and the quantitative version of the Carleman inequality, Tao showed in \cite{MR4337421} $N^\ast\simeq\exp(\exp(\exp(A^{C_0^7})))$ whose proof is divided into three steps:\\
$\mathbf{1)~Backward~frequency~bubbling.}$\\
Suppose $\|v\|_{L^\infty_t L^3_x([t_0-T,t_0] \times \R^3)}\leq A$ is such that $N_0^{-1}|P_{N_0}v(x_0,t_0)|>A^{-C_0}.$
Then for any $n\in\mathbb{N}$, there exists $N_n>0$ and $(t_n,x_n)\in (t_0-T,t_{n-1})\times\R^3$ such that
$$N_n^{-1}|P_{N_n}v(x_n,t_n)|>A^{-C_0},$$
with
$$x_n=x_0+O((t_0-t_n)^{\frac12}),\quad N_n\sim|t_0-t_n|^{-\frac12}.$$
$\mathbf{2)~The~vorticity~lower~bound~converted ~to~a~lower~bound~on~the~velocity.}$
\begin{itemize}
\item  {\em Transfer of concentration in Fourier space to physical space}. The previous step and $\|v\|_{L^\infty_t L^3_x((0,T) \times \R^3)}\leq A$ imply that for some small scales $S>0$ and $I_S\subset [t_0-S,t_0-A_3^{-O(1)}S]$ such that
\begin{align}\label{low bound-01}
 \int_{|x-x_0|\leq A_4^{O(1)} S^{\frac12}}|\nabla\times v(t',x)|^2\ \mathrm{d}x\geq A_3^{-O(1)}S^{-\frac12}\quad \text{for~all}\quad t'\in I_S
 \end{align}
 called as an enstrophy-type lower bound, where $A_j\triangleq A^{C_0^j}$ for all $j\in\mathbb{Z}^{+}$ and $A_{j+1}=A_j^{C_0}$.
\item {\em Large-scale  propagation of concentration.} Using quantitative versions of the Carleman inequalities which requires the ``epochs of regularity'', Tao showed that the enstrophy-type lower bound \eqref{low bound-01}
can be transferred from small scales $\{x:|x-x_0|\leq A_4^{O(1)} S^{\frac12}\}$ to large scale, i.e., for any $t'\in I_S$ and $S'=A^{-O(1)}S$, one has
\EQS{\label{low bound-02}
\int_{R<|x-x_0|\leq 2R}|\nabla\times v(t',x)|^2\ \mathrm{d}x\gtrsim \exp(-O(A_5^3R^3/S')(S')^{-\frac{1}{2}}
} with any $R\geq A_5S^{1/2}$.
\item  {\em Forward-in-time propagation of concentration.} With the help of the quantitative Carleman inequalities, one can propagate the lower bound on $I_S\times\{x: R<|x-x_0|\leq 2R\}$ forward in time until one returns  to original time $t_0$, which finally leads to
\begin{align}\label{ann.1}
 \int_{R_S<|x-x_0|<R'_S}|v(t_0,x)|^3\ \mathrm{d}x\geq \exp(-\exp(A_6^{O(1)})).
 \end{align}
\end{itemize}
$\mathbf{3)~Conclusion:~summing~scales~to~bound~TN_0^2.}$\\
Letting $S$ vary for certain permissible scale, the annuli in \eqref{ann.1} become disjoint. The sum of \eqref{ann.1} over such disjoint permissible annuli is bounded above by $\|v(t_0,x)\|_{L^3(\R^3)}$. This gives the desired bounded on $N_0$, i.e.
$$TN_0^2\lesssim \exp(\exp(\exp(A_6^{O(1)}))),$$
which implies that $N^\ast\simeq\exp(\exp(\exp(A^{C_0^7})))$. This, along with the classical energy method, yields
$$
\|v(t,x)\|_{L^\infty(\R^3)}\leq\exp(\exp(\exp(A^{C_0^7})))t^{-\frac12},\quad 0<t\leq T.
$$
Finally, assume by contradiction that \eqref{ns.bur} fails and take
$A=(\log\log\log( \frac{1}{T_*-t}))^c,$
one derives a contradiction by using the Prodi-Serrin-Ladyshenskaya criterion (see \cite{MR236541, MR126088, MR136885}). By now, a series of generalizations of Tao's work were promoted
 by Barkr, Prange \cite{MR4278282}, Palasek \cite{MR4334731, MR4502800}, and Hu et al. \cite{hu2024quantitativeboundsboundedsolutions}.
 Combining the local-in-space smoothing techniques (near the initial time) established by Jia and \v{S}ver\'{a}k \cite{MR3179576} with the quantitative Carleman inequality obtained by Tao \cite{MR4337421}, Barker and Prange in \cite{MR4278282} investigate the behavior of critical norms near a potential singularity to the solutions of \eqref{nseq} with Type I bound $\|v\|_{L_t^{\infty}L_x^{3,\infty}}\leq M$. Namely, if
$T^\ast$ is a first blow-up time and $(0,T^\ast)$ is a singular point, then
$$\|v\|_{L^{3}(B_R(0))}\geq C_M\log\Big(\frac{1}{T^\ast-t}\Big),\quad R=O({T^\ast-t}^{-\frac12}).$$
What's more, this blow-up rate is optimal for a class of potential non-zero backward discretely self-similar solutions. In addition, they quantified the result of Seregin \cite{MR2925135}, which say that if $v$ is a smooth finite-energy solution to the system \eqref{nseq} on $(-1,0)\times\R^3$ with
$$\limsup_n\|v(\cdot,t_{n})\|_{L^{3}(\R^3)}<\infty~\text{and}~t_{n}\uparrow 0,$$
 then for  $j\gg 1$
 $$
 \|v\|_{L^{\infty}\left(\R^3\times\big(\frac{t_{j+1}}{4},0\big)\right)}=O\Big(\frac{1}{\sqrt{-t_{j+1}}}\Big).
 $$  In \cite{MR4334731}, Palasek proved that if the solution of \eqref{nseq} satisfies the critical bound
 $$\|r^{1-\frac3{q}}v\|_{L^\infty_tL^q_x}\leq A,$$
 with $r=\sqrt{x_1^2+x_2^2}$, and $v,~q$ fall into one of two case:
 $$\text{either}~q\in(3,+\infty),~\text{or}~v~\text{is~axisymmetric~and}~q\in(2,3],
 \footnote{The condition that $v$ is an axisymmetric solution can actually be relaxed to its equivalence with another axisymmetric function. In other words, there exists an axisymmetric function $f:\R\times \R^3\rightarrow [0,\infty)$ and $C>0$ such that $C^{-1}f\leq|v|\leq Cf$.}$$
then the blow-up rate \eqref{ns.bur} can be improved  to
\begin{align*}
 \limsup_{t \to T_*^-} \frac{\|r^{1-\frac3{q}}v(t)\|_{L^q_x(\R^3)}}{(\log\log( \frac{1}{T_*-t}))^c} = +\infty
 \end{align*}
for some constant $c\in(0,+\infty)$.  Since then, Palasek \cite{MR4502800} got the quantitative regularity for solutions $v\in L^\infty_tL^d_x$ to the system \eqref{nseq} for case $d\geq 4$, which gives a quantification of the qualitative result obtained by Dong and Du \cite{MR2551795}. Very recently, Hu et al. \cite{hu2024quantitativeboundsboundedsolutions} studied the quantitative regularity and blow-up criterion of the classical solution to the NSE \eqref{nseq}  in $\dot{B}_{p,\infty}^{-1+\frac3{p}}(\R^3)~(3<p<\infty)$. Due to the low regularity in $\dot{B}_{p,\infty}^{-1+\frac3{p}}(\R^3)$, some new ideas are given to fix the related blocks in \cite{hu2024quantitativeboundsboundedsolutions}.
For more details in this direct, refer to \cite{MR4278282,hu2024quantitativeboundsboundedsolutions,MR4334731,MR4502800} and their references. Additionally, whether the blow-up rate \eqref{ns.bur} for this problem is optimal and whether the results can be extended to other critical spaces such as the Lorentz space $L^{3,\infty}(\R^3)$ remain open problems.

\subsection{Formulation of main results}

In this paper, as an attempt to understand Tao's idea in \cite{MR4337421}, we investigate  quantitative estimates for MHD equations. It is well-known \cite{MR716200} that the system \eqref{mhdeq} admits a local strong solution and a global energy weak solution for any given Schwartz class initial data. As in NSE \eqref{nseq}, the question of the regularity and uniqueness of weak solutions is still open. For convenience, we sketch  the research on the regularity criteria for solutions of the MHD system. The Ladyzhenskaya-Prodi-Serrin-type criteria to the MHD equations in terms of both the velocity field $v$ and the magnetic field $H$ is established by Wu in \cite{MR1915942, MR2026210}, which says: if the solution $(v, H)$ of the MHD equation satisfies
 $$\int_{0}^T(\|v\|^2_{L^\infty(\R^3)}+\|H\|^2_{L^\infty(\R^3)})\ \mathrm{d}t<\infty,$$
 then $(v, H)$ is smooth on $[0,T]$. Subsequently, people via some numerical experiments find that the velocity field should play a more important role than the magnetic field in the regularity theory of solutions, see for example \cite{MR1343429}. In fact, He and Xin \cite{MR2142366} and
Zhou \cite{MR2128731} have presented some regularity criteria to the MHD equations in terms of the velocity field only. Subsequently, Chen, Miao, and Zhang \cite{MR2336368, MR2452599} extended and improved upon the results of \cite{MR2142366} and \cite{MR2128731}. Additionally, Cao and Wu \cite{MR2595721} explored the scenario where the integrability condition is solely imposed on the directional derivative of the velocity field. They demonstrated that if the directional derivative of $v$, denoted as $\partial_{x_3} v$, satisfies the integrability condition
$$\partial_{x_3} v\in L^{p}(0,T; L^q(\R^3)),\quad\frac2p+\frac3q=1,\quad q\in(3,\infty),$$
then $(v, H)$ is regular in $\R^3 \times (0, T]$. For more details on the regularity criteria of weak solutions of \eqref{mhdeq},  we refer the reader to \cite{MR2334589,MR2165089,MR2398230} and the references therein.
For the limit case $L^\infty(0,T;L^3(\R^3))$, Mahalov, Nicolaenko and Shikin in \cite{MR2270882} considered the qualitative regularity of solution to \eqref{mhdeq}  and showed that
\begin{itemize}
\item  Suppose $(v,H,\pi)$ is a classical solution to MHD equations whose maximal time of existence $T_*$ is finite.  Then
\begin{align}\label{MNSbup1}
\limsup_{t \to T_*^-} \| (v,H)(t) \|_{L^3_x(\R^3)} = +\infty.
 \end{align}
 \end{itemize}
 Inspired by the breakthrough work of Tao \cite{MR4337421}, we quantify \eqref{MNSbup1}. Let us now state our first theorem
 \begin{thm}\label{main-1}  Let
$(v,H): [-1,1] \times \R^3 \to \R^3\times\R^3 $, $\pi: [-1,1] \times \R^3\to \R$ be a classical solution to the MHD equations with
\begin{equation}\label{u3}
 \|(v,H)\|_{L^\infty_t L^3_x([-1,1] \times \R^3)} \leq A
\end{equation}
for some $A>C_0 \gg 1$.  Then, for $j = 0,1,$  the following quantitative estimates hold
\EQS{\label{quanti-estimate}
 &|\nabla^j_x (v,H)(t,x)| \leq \exp\exp\exp(A^{C_0^8}), \\&
 |\nabla^j_x (\omega,J)(t,x)| \leq \exp\exp\exp(A^{C_0^8}),
 }
whenever $\frac78\leq t \leq 1$, $x \in \R^3$.
\end{thm}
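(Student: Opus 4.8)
The plan is to adapt Tao's three-step scheme for the Navier–Stokes equations to the MHD system, tracking the magnetic field $H$ alongside $v$ and the current $J = \nabla \times H$ alongside the vorticity $\omega = \nabla \times v$. First I would reduce the theorem to a frequency-localized smallness criterion: if for all frequencies $N \geq N^*$ one has $N^{-1}\|P_N(v,H)\|_{L^\infty_t L^\infty_x([\frac12,1]\times\R^3)} < \varepsilon_0$, then the full $L^\infty$ bounds on $(v,H)$ and $(\omega,J)$ up to time $1$ follow by the standard (quantitative) energy/parabolic-smoothing machinery applied to the MHD system, which has the same scaling as NSE under $(v,H)(t,x)\mapsto \lambda(v,H)(\lambda^2 t,\lambda x)$, $\pi\mapsto\lambda^2\pi$. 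The task is then to bound $N^*$, and for this I would argue by contradiction: assuming the $L^3$ bound $A$ holds but concentration persists at some $(t_0,x_0)$ and scale $N_0$, i.e. $N_0^{-1}|P_{N_0}(v,H)(t_0,x_0)| > A^{-C_0}$, I would derive $T N_0^2 \lesssim \exp\exp\exp(A^{C_0^7})$, a quantitative bound on how concentrated the solution can be, which closes the argument.

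The three steps are: (1) \textbf{backward frequency bubbling}, producing a sequence $(t_n,x_n,N_n)$ with $N_n^{-1}|P_{N_n}(v,H)(t_n,x_n)| > A^{-C_0}$, $x_n = x_0 + O((t_0-t_n)^{1/2})$, $N_n \sim |t_0-t_n|^{-1/2}$ — this uses only the $L^3$ bound, the Duhamel formula for the MHD equations, and local smoothing, and goes through essentially as in Tao; (2) \textbf{converting the frequency concentration to an enstrophy-type lower bound} on small balls, namely $\int_{|x-x_0|\leq A_4^{O(1)}S^{1/2}}(|\omega|^2 + |J|^2)(t',x)\,\mathrm{d}x \gtrsim A_3^{-O(1)}S^{-1/2}$ on a time interval $I_S$, then propagating it to large annuli via the quantitative Carleman inequalities of \cite{MR4337421}, using epochs of regularity; (3) \textbf{forward-in-time propagation} of the annular lower bound back to $t_0$ to get $\int_{R_S < |x-x_0| < R'_S} (|v|^3 + |H|^3)(t_0,x)\,\mathrm{d}x \geq \exp(-\exp(A_6^{O(1)}))$, and finally summing over a dyadic family of permissible scales $S$ so that the annuli are disjoint; the sum is controlled by $\|(v,H)(t_0)\|_{L^3_x}^3 \leq A^3$, yielding the bound on $TN_0^2$.

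The main obstacle — and the source of the ``technical innovations'' advertised in the abstract — is that the equation for the magnetic field is only \emph{first order} in $H$ on the right-hand side of $\partial_t v$, whereas the vorticity equation for NSE is genuinely second order; more precisely, the natural Carleman-propagated quantity is the enstrophy/current density at the scale $N^{-1} \sim |t_0-t_n|^{1/2}$, but $H$ and $\omega$ do not sit at the same scaling level — the Biot–Savart-type relation $v = (-\Delta)^{-1}\nabla\times\omega$ loses a derivative relative to how $H$ enters $\eqref{mhdeq}_1$. To reconcile this I would introduce a \emph{corrector function}: rather than Carleman-estimating $(\omega,J)$ directly, one estimates a modified pair in which $H$ (or a suitable antiderivative/potential of it) is shifted to match the vorticity's scaling, so that the forcing term $(H\cdot\nabla)H$ and the coupling $(H\cdot\nabla)v - (v\cdot\nabla)H$ in the $J$-equation can be absorbed into the error terms of the quantitative Carleman inequality without destroying the exponential lower bounds. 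Once the corrector is in place, the algebra of steps (2) and (3) — tracking the constants $A_j = A^{C_0^j}$ through the Carleman weights, the choice of $R$-ranges, and the disjointness of annuli in the summation — is routine but bookkeeping-heavy, and care is needed to ensure the coupling terms between $v$ and $H$ in both the momentum and induction equations are each dominated by the main enstrophy term on the relevant space-time regions.
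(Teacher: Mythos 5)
Your proposal follows essentially the same route as the paper: reduce the theorem to a frequency-localized smallness threshold $N_*$ (Proposition \ref{main-est}) and then conclude by an energy/enstrophy argument for the split $(v,H)=(v^{l},H^{l})+(v^{n},H^{n})$, while $N_*$ itself is bounded by Tao's three-step scheme (backward frequency bubbling, Carleman propagation of enstrophy-type lower bounds through epochs and annuli of regularity, and summation over disjoint annuli against the $L^3$ bound), with a corrector to reconcile the scaling mismatch between $H$ and $\omega$. The only imprecision is the form of that corrector: the paper does not Carleman-estimate $(\omega,J)$ at all — the term $R(v,H)$ in the $J$-equation makes the required differential inequality fail — but instead works with the augmented vector $W=(H,\omega,\partial_{x_1}H,\partial_{x_2}H,\partial_{x_3}H)$ after a parabolic rescaling, together with the weighted density $W_c=T_3^{-1}|H|^2+|\omega|^2+\sum_{i}|H_{x_i}|^2$.
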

\begin{remark}
In fact, the above quantitative estimates are valid for any $j\geq0$ by using Lemma \ref{theo.higher}.
\end{remark}

Theorem \ref{main-1}, along with MHD equations version of Ladyzhenskaya-Prodi-Serrin criteria, gives the following quantitative blow-up criterion.

\begin{thm}\label{main-2}  Let
$(v,H): [-1,1) \times \R^3 \to \R^3\times\R^3$, $\pi: [-1,1) \times \R^3 \to \R$ be a classical solution to the MHD equations which blows up at $t=1$.  Then
$$ \limsup_{t \to 1^-} \frac{\|(v,H)(t)\|_{L^3_x(\R^3)}}{(\log\log\log( \frac{1}{1-t}))^c} = +\infty,$$
for some constant $c\in(0,+\infty)$.
\end{thm}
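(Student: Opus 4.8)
The proof proceeds by contradiction, combining a rescaled form of Theorem~\ref{main-1} with the Ladyzhenskaya--Prodi--Serrin criterion for the MHD equations due to Wu \cite{MR1915942, MR2026210}: if a classical solution $(v,H)$ satisfies $\int_{t_0}^{1}(\|v(t)\|_{L^\infty_x}^2+\|H(t)\|_{L^\infty_x}^2)\,dt<\infty$, then it extends to a smooth solution on the closed interval $[t_0,1]$. Set $c:=\tfrac1{2C_0^{8}}$ (any smaller value works equally well) and suppose, for contradiction, that $\limsup_{t\to1^-}\|(v,H)(t)\|_{L^3_x}/(\log\log\log\tfrac1{1-t})^{c}=L<\infty$. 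Then there is $t_1\in(-1,1)$, which we take so close to $1$ that $\log\log\log\tfrac1{1-t}>0$ on $[t_1,1)$, such that $\|(v,H)(t)\|_{L^3_x}\le \phi(t):=(L+1)(\log\log\log\tfrac1{1-t})^{c}$ for all $t\in[t_1,1)$. Since $\phi$ is increasing with $\phi(t)\to\infty$ as $t\to1^-$, after shrinking $[t_1,1)$ we may further assume $\phi(t)>\max\{\,C_0,\ \sup_{[-1,t_1]}(\|v(\cdot)\|_{L^3_x}+\|H(\cdot)\|_{L^3_x})\,\}$ on $[t_1,1)$, so that in particular $\sup_{s\in[-1,t]}(\|v(s)\|_{L^3_x}+\|H(s)\|_{L^3_x})\le\phi(t)$ for every $t\in[t_1,1)$.

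The first step is to convert this critical bound into a pointwise bound carrying \emph{no factor $(1-t)^{-1/2}$}, which forces us to apply Theorem~\ref{main-1} over the entire history interval $[-1,t]$ rather than over a short window before $t$. For $t\in[\max\{t_1,-\tfrac78\},1)$ set $\lambda=\sqrt{(1+t)/2}\in[\tfrac14,1]$ and $t_c=(t-1)/2$, and let $(\tilde v,\tilde H,\tilde\pi)(s,y)=(\lambda v,\lambda H,\lambda^2\pi)(t_c+\lambda^2 s,\lambda y)$, which is a classical solution of the MHD equations on $[-1,1]\times\R^3$ (the rescaling maps $[-1,1]\times\R^3$ onto $[-1,t]\times\R^3$). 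By scale invariance of $\|\cdot\|_{L^3_x}$ together with the uniform bound above, $\|(\tilde v,\tilde H)\|_{L^\infty_s L^3_x([-1,1]\times\R^3)}\le\phi(t)$. Applying Theorem~\ref{main-1} with $A=\phi(t)>C_0$, evaluating at $s=1$ (where $t_c+\lambda^2=t$), and undoing the scaling yields
\[
 \|(v,H)(t)\|_{L^\infty_x(\R^3)}\ \le\ \lambda^{-1}\exp\exp\exp\!\big(\phi(t)^{C_0^{8}}\big)\ \le\ 4\exp\exp\exp\!\big(\phi(t)^{C_0^{8}}\big),
\]
valid for all $t\in[\max\{t_1,-\tfrac78\},1)$; the decisive point is that $\lambda\ge\tfrac14$ is bounded below by an absolute constant, so no power of $(1-t)$ is produced.

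It remains to verify that the right-hand side is square-integrable in time near $t=1$, and this is exactly where the smallness of $c$ enters. Since $cC_0^{8}=\tfrac12$ we have $\phi(t)^{C_0^{8}}\le(L+1)^{C_0^{8}}\big(\log\log\log\tfrac1{1-t}\big)^{1/2}$. Writing $x=\log\tfrac1{1-t}\to\infty$, the inequality $\big(\exp\exp\exp(\phi(t)^{C_0^{8}})\big)^{2}\le(1-t)^{-1/2}$ is equivalent to $(L+1)^{C_0^{8}}(\log\log x)^{1/2}\le\log(\log x-\log 4)$, which holds for all large $x$ because the left-hand side is $o(\log\log x)$ while the right-hand side is $\sim\log\log x$. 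Hence, for $t$ sufficiently close to $1$, $\|v(t)\|_{L^\infty_x}^{2}+\|H(t)\|_{L^\infty_x}^{2}\le 32\,(1-t)^{-1/2}$, so $\int_{t_1}^{1}(\|v(t)\|_{L^\infty_x}^2+\|H(t)\|_{L^\infty_x}^2)\,dt<\infty$ (the contribution of any compact subinterval of $[t_1,1)$ being finite by smoothness). By the Ladyzhenskaya--Prodi--Serrin criterion, $(v,H)$ is then smooth on $[t_1,1]$, contradicting the hypothesis that the solution blows up at $t=1$.

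The two genuinely delicate points are precisely those just highlighted. First, one must use the long history interval $[-1,t]$ when invoking Theorem~\ref{main-1}: a window of length comparable to $1-t$ would give only $\|(v,H)(t)\|_{L^\infty_x}\lesssim(1-t)^{-1/2}\exp\exp\exp(\cdots)$, whose square is not integrable near $t=1$ — the bare $(1-t)^{-1}$ is already borderline — and the argument would collapse. Second, the calibration $cC_0^{8}<1$ is what makes the triple exponential of the very slowly growing critical bound $\phi(t)$ still $o\big((1-t)^{-\varepsilon}\big)$ for every $\varepsilon>0$, hence square-integrable in time. Everything else is routine parabolic-rescaling bookkeeping together with the quoted continuation criterion.
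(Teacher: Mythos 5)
Your proof is correct and follows essentially the same route as the paper's: argue by contradiction, feed the assumed triple-logarithmic bound into Theorem \ref{main-1} as the constant $A$, verify that the resulting triple-exponential $L^\infty$ bound is $o\big((1-t)^{-\varepsilon}\big)$ and hence square-integrable in time near $t=1$, and conclude via the Ladyzhenskaya--Prodi--Serrin criterion of Wu. The only difference is that you make explicit the parabolic rescaling of $[-1,1]$ onto $[-1,t]$ (with scaling factor bounded below, so no $(1-t)^{-1/2}$ loss) needed to apply Theorem \ref{main-1} at times $t$ near $1$, a bookkeeping step the paper leaves implicit.
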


 \subsection{New ingredients of the proof}
To obtain the quantitative estimates \eqref{quanti-estimate}, we, by using the same argument as \cite{MR4337421}, need to establish the upper bound for $N_0$\footnote{Under the scale-invariant assumption \eqref{u3}, if $N^{-1}\|P_{N}(v,H)\|_{L_{t}^{\infty}(0,1;L_{x}^{\infty}(\R^3))}<A^{-C}$ fails for $N=N_0$, what is an upper bound for $N_0$?}.
To gain the upper bound of $N_0$, the core of the proof  is to derive enstrophy-type lower bounds for the {\em corrector} function $W_c(t,x)$ which is defined below, the key ingredient is the quantitative Carleman inequalities. To exploit the quantitative Carleman inequality, we have to establish the related differential inequality. To this end, we need to derive the quantitative $L^\infty$ estimates for $u, \nabla u, \nabla H$, etc. However, to establish the related differential inequality, we have to overcome  blocks arose by inconsistent scales between  the magnetic field $H$ and the vorticity $w$.

For convenience, we first introduce the equations satisfied by $\nabla\times v(t,x)$ and $\nabla\times H(t,x)$, denoted by $\omega(t,x)$ and $J(t,x)$ respectively.  Now taking the curl on both sides of $\eqref{mhdeq}_1$ and $\eqref{mhdeq}_2$, one has
\begin{equation}\label{wJeq}
\begin{cases}
\partial_{t}\omega-\Delta \omega+(v\cdot\nabla)\omega-(\omega\cdot\nabla)v-(H\cdot\nabla)J
+(J\cdot\nabla)H=0,\\
\partial_{t}J-\Delta J+(v\cdot\nabla)J-(H\cdot\nabla)\omega-(J\cdot\nabla)v+(\omega\cdot\nabla)H=2R(v,H)
\end{cases}
\end{equation}
with
\begin{align*}
R(v,H)=\left(\begin{matrix}
 \partial_{3}v\cdot\partial_{2}H-\partial_{2}v\cdot\partial_{3}H\\
 \partial_{1}v\cdot\partial_{3}H-\partial_{3}v\cdot\partial_{1}H\\
 \partial_{2}v\cdot\partial_{1}H-\partial_{1}v\cdot\partial_{2}H\\
\end{matrix}
\right).
\end{align*}
This system can be viewed as a heat system with variable coefficients (in which the lower order coefficients v, H, $\nabla v$, $\nabla H$ depend on the velocity field and magnetic field).  However, it is impossible that
the pair ($\omega,J$) satisfies  the following differential inequality
$$\Big|(\partial_t-\Delta)(\omega, J)\Big|\leq C_{carl}^{-1}\widetilde{T}^{-1}|(\omega, J)|+C_{carl}^{-\frac12}\widetilde{T}^{-\frac12}|\nabla(\omega, J)|,$$
due to the additional term $R(v,H)$. Thus, we can not directly use system \eqref{wJeq} to derive the analogous  lower bound \eqref{ann.1}. To fixed this problem, we introduce $W=(H, \omega, \partial_{x_k}H)$, where $\partial_{x_k}H\triangleq H_{x_k},~(k=1,~2,~3)$ satisfies
\begin{equation}\label{Hxkeq-1}
 \partial_tH_{x_k}-\Delta H_{x_k}=(H_{x_k}\cdot\nabla)v
 +(H\cdot\nabla)v_{x_k}-(v_{x_k}\cdot\nabla)H-(v\cdot\nabla)H_{x_k}.
\end{equation}
 With the help of epochs of estimate and annuli of regularity for $v$ and $H$, we can obtain
\begin{align}\label{dif-con-1}
|(\partial_t-\Delta)W|&\leq \Big(C_{carl}^{-1}\widetilde{T}^{-1}
+C_{carl}^{-\frac32}\widetilde{T}^{-\frac32}\Big)|W| +\Big(C_{carl}^{-1}\widetilde{T}^{-1}
+C_{carl}^{-\frac12}\widetilde{T}^{-\frac12}\Big)|\nabla W|\nonumber\\
&\leq C_{carl}^{-\frac32}\widetilde{T}^{-\frac32}|W|
+C_{carl}^{-1}\widetilde{T}^{-1}|\nabla W|,
\end{align}
due to the fact that the scale of time $\widetilde{T}$ is  small. On the other hand, we notice that the MHD system \eqref{mhdeq}, as the NSE \eqref{nseq}, is invariant with respect to the following rescaling
$$
(v_{\lambda}(x,t), \pi_{\lambda}(x,t), H_{\lambda}(x,t)):=(\lambda v(\lambda x,\lambda^2t),\lambda^2 \pi(\lambda x,\lambda^2t),\lambda H(\lambda x,\lambda^2t))
$$
This scale-invariance property enable us to assign a ``dimension'' to the following quantities
\EQs{
\widetilde{T}^{-\frac{1}{2}}\,~\mbox{has dimension}\, -1; \quad
\partial_t \,~(\mbox{or}\,~\partial_x^2) \,~\mbox{has dimension}\, -2.
}
That is to say the scales (or dimension) of $\widetilde{T}^{-1}$ and $\partial_t$ (or $\partial_x^2$)  are equivalent. However, to exploit the quantitative version of the Carleman inequality, the dimensions of left and right sides of the differential inequality \eqref{lu} (or \eqref{weifen-3}) must be consistent  since the scale of time is small. Obviously, the scales of left and right sides of the differential inequality \eqref{dif-con-1} are inconsistent.
Thus we can not directly use quantitative Carleman inequalities for $W$ to derive  desired lower bound estimates. To overcome this block, some technical innovations are required. Precisely,
we perform a translation and scaling transformation to $W$ by setting
$$
v_\lambda (t,x)=\lambda v(t'_1-\lambda^2t,x_*+\lambda x),\quad H_\lambda (t,x)=\lambda H(t'_1-\lambda^2t,x_*+\lambda x),
$$
and
 $$
 (v_\lambda, H_\lambda)_{x_k} (t,x)=\lambda^2 (v_{y_k}, H_{y_k})(t'_1-\lambda^2t,x_*+\lambda x),\quad
(\omega_\lambda,J_\lambda) (t,x)=\lambda^2 (\omega,J)(t'_1-\lambda^2t,x_*+\lambda x),
$$
where $\lambda=\sqrt{\widetilde{T}}, y=x_*+\lambda x$ and $(t,x)\in[0,1]\times \R^3$. It is clear that $(v_\lambda, H_\lambda, (H_\lambda)_{x_k})$ is also a solution of \eqref{mhdeq}, \eqref{Hxkeq-1}, and
$W_{\lambda}\triangleq(H_\lambda, \omega_{\lambda}, (H_{\lambda})_{x_1}, (H_{\lambda})_{x_2}, (H_{\lambda})_{x_3})$ fulfills the following differential inequality
\EQS{\label{dif-con-2}
|\partial_t W_\lambda+\Delta W_\lambda|\leq \frac{1}{4} |W_\lambda|+\frac{1}{2}|\nabla W_\lambda|\quad\text{on}\quad {[0,1] \times \Omega}.
}
Here the space domain $\Omega$ is $\R^3$ or the given annulus, see Section \ref{6.upbound} for more details. The advantage of the differential inequality \eqref{dif-con-2} is that  the scale of time is normalised, and so the quantitative Carleman inequality is valid for $W_{\lambda}$. However, notices that $W_\lambda(x,t)\neq \lambda^2 W(y,t'_1-\lambda^2t)$ since scales are inconsistent between the magnetic field $H$ and the vorticity $w$, one can not derive the analogous enstrophy-type lower bound \eqref{low bound-02} for $W_{\lambda}$, which is the second main block of the paper.  The idea, fixing this difficulty, is  now to introduce a {\em corrector} function $W_c(t,x)$ defined by
$$
W_c(t,x)=T_3^{-1}|H(t,x)|^2+|\omega(t,x)|^2+\sum_{i=1}^{3}|H_{x_i}(t,x)|^2
$$
with $A_4^2N_0^{-2}\leq T_3\leq A_4^{-1}$. Using the quantitative Carleman inequality to $W_{\lambda}$ and the pigeonhole principle,  we, scaling back to the original variables, finally derive the exponentially small yet significant enstrophy-type lower bound for the {\em corrector} function $W_c(t,x)$ at the final moment of time $t_0$:
\begin{align*}
\int_{5\widetilde{R} \leq |x| \leq \frac{3A_6\widetilde{R}}{10}} W_c(t_0,x)\ \mathrm{d}x \gtrsim e^{-e^{A_6^{8}}} T_3^{-\frac{1}{2}},
 \end{align*}
 with $A_6T_3^{\frac12}\leq \widetilde{R}\leq e^{A_6^7} T_3^{\frac{1}{2}}$. Finally, the pigeonhole principle and H\"{o}lder's inequality enable us  convert the lower bound of $W_c(t_0,x)$ to the lower bound on $(v(t_0,x),H(t_0,x))$, i.e.,
 $$
\int_{B(\tilde{x}, \widetilde{r})} |v(t_0,x)|^3\ \mathrm{d}x +\int_{B(\tilde{x}, \widetilde{r})} |H(t_0,x)|^3\ \mathrm{d}x\gtrsim e^{-9e^{A_6^{11}}},
 $$
where $5\widetilde{R}\leq |\tilde{x}|\leq \frac{3A_6\widetilde{R}}{10}$ and $\widetilde{r}=e^{-e^{A_6^{11}}}T_3^{\frac12}$. The above lower bound finally leads to the the upper bound for $N_0$. Precisely,
\begin{prop}\label{main-est}
Let $(v,H,\pi): [-1,1] \times \R^3 \to \R^3\times \R^3\times\R$ be a classical solution of \eqref{mhdeq} satisfying
\begin{equation}\label{able}
 \|(v,H)\|_{L^\infty_t L^3_x([-1,1] \times \R^3)}\leq A.
\end{equation}
Suppose that there exists $(t_0,x_0)\in [0,1]\times\R^3$ and $N_0 > A_4$ such that
$$ |P_{N_0} (v,H)(t_0,x_0)| \geq A_1^{-1} N_0.$$
Then
$$ N_0 \leq e^{e^{e^{A_6^{12}}}}.$$
\end{prop}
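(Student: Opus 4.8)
My plan is to adapt Tao's three-step scheme (backward frequency bubbling $\to$ propagation of concentration via quantitative Carleman inequalities $\to$ summation over scales) to the MHD system, incorporating the two new wrinkles already flagged: the Carleman machinery must be run on the augmented field $W=(H,\omega,\partial_{x_k}H)$ rather than on $(\omega,J)$ (because of the forcing term $R(v,H)$ in \eqref{wJeq}), and, since $W_\lambda\ne\lambda^2 W(\cdot,\cdot)$ under the relevant translation/rescaling, concentration must be tracked through the corrector $W_c=T_3^{-1}|H|^2+|\omega|^2+\sum_i|H_{x_i}|^2$.

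\textbf{Step 1 (backward frequency bubbling).} Beginning from the hypothesis $|P_{N_0}(v,H)(t_0,x_0)|\ge A_1^{-1}N_0$ and using the quantitative $L^\infty$-regularity for $(v,H)$ already available under \eqref{able} together with Bernstein/interpolation inequalities, I would produce, exactly as in Tao, a sequence $(t_n,x_n,N_n)_{n\in\N}$ with $t_n\in(t_0-1,t_{n-1})$, $N_n\sim|t_0-t_n|^{-1/2}$, $x_n=x_0+O(|t_0-t_n|^{1/2})$, and $N_n^{-1}|P_{N_n}(v,H)(t_n,x_n)|\gtrsim A_1^{-1}$. The MHD nonlinearity is, like the Navier--Stokes one, subcritical away from the critical frequency shell, so nothing genuinely new is needed at this stage.

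\textbf{Step 2 (physical-space concentration and Carleman propagation).} From the unit-scale Fourier concentration at $(t_n,x_n)$, after the translation-and-scaling $\lambda=\sqrt{\widetilde T}\sim N_n^{-1}$ described before the proposition, I would use pigeonholing over a short time window $I_S$ to get a small-ball enstrophy-type bound $\int_{|x-x_0|\le A_4^{O(1)}S^{1/2}}W_c(t',x)\,dx\gtrsim A_3^{-O(1)}S^{-1/2}$ for $t'\in I_S$, with the $|\omega|^2$-term carrying the velocity concentration. Next, feeding the quantitative $L^\infty$ bounds for $v,\nabla v,\nabla H$ on the epochs of regularity into \eqref{dif-con-1}, the rescaled field $W_\lambda$ obeys the \emph{normalized} differential inequality \eqref{dif-con-2} on $[0,1]\times\Omega$; I would then apply Tao's spatial quantitative Carleman inequality to spread the lower bound to annuli $\{R<|x-x_0|\le 2R\}$ with $R\ge A_5 S^{1/2}$, and his second (backward-uniqueness-type) Carleman inequality to propagate it forward in time to $t_0$. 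Scaling back --- and this is exactly where $W_c$ rather than $W_\lambda$ must be used --- yields the exponentially small annular lower bound $\int_{5\widetilde R\le|x|\le 3A_6\widetilde R/10}W_c(t_0,x)\,dx\gtrsim e^{-e^{A_6^8}}T_3^{-1/2}$ for $A_6 T_3^{1/2}\le\widetilde R\le e^{A_6^7}T_3^{1/2}$.

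\textbf{Step 3 (conversion to $L^3$ and summation), and the main obstacle.} A further pigeonhole over $\widetilde R$ and a sub-ball, plus Hölder's inequality to trade $|\omega|^2,|H_{x_i}|^2,T_3^{-1}|H|^2$ for $|v|^3+|H|^3$ on $B(\tilde x,\tilde r)$ with $\tilde r=e^{-e^{A_6^{11}}}T_3^{1/2}$, gives $\int_{B(\tilde x,\tilde r)}(|v|^3+|H|^3)(t_0,\cdot)\gtrsim e^{-9e^{A_6^{11}}}$. Letting the scale $T_3$ run dyadically over $[A_4^2N_0^{-2},A_4^{-1}]$, the associated balls at time $t_0$ are pairwise disjoint, so summing and comparing with $\|(v,H)(t_0)\|_{L^3_x}^3\le 2A^3$ bounds the number $\sim\log_2(N_0^2/A_4^3)$ of admissible scales, and absorbing all constants into the tower gives $N_0\le e^{e^{e^{A_6^{12}}}}$. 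The crux of the whole argument is Step 2: one must verify that $v,H,\nabla v,\nabla H$ can be controlled pointwise by the correct powers of $\widetilde T^{-1/2}$ on a sufficiently dense family of times (the epochs of regularity) so that \eqref{dif-con-1} degenerates to the clean normalized form \eqref{dif-con-2}, and then organize the Carleman propagation so that the scale mismatch between the first-order quantity $H$ and the second-order quantities $\omega,\partial_{x_k}H$ does not destroy the annular lower bound --- which is precisely the role of the weight $T_3^{-1}$ in $W_c$. The bookkeeping of the iterated exponentials is tedious but mechanical once this is in place.
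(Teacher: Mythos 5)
Your proposal follows essentially the same route as the paper's proof: frequency bubbling down to a scale $T_1$, transfer of Fourier concentration to an enstrophy-type lower bound on a small ball, spatial spreading and forward-in-time propagation via the two quantitative Carleman inequalities applied to the rescaled augmented field $W_\lambda$ (with the scale mismatch handled exactly by the weighted corrector $W_c$), and finally a pigeonhole/Hölder conversion to an $L^3$ lower bound on disjoint annuli summed over geometrically increasing scales $T_3$. The only caveat is that the proposal is a strategy outline deferring the epoch/annuli regularity estimates and the Carleman bookkeeping, but the steps it defers are precisely the ones the paper carries out, in the same order.
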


\subsection{Notations}
\begin{enumerate}
\item we denote $\partial_t=\frac{\partial}{\partial t}, \partial_i=\frac{\partial}{\partial x_i}, \partial_{ij}=\frac{\partial^2}{\partial x_ix_j}$ and  adapt the notation
$$
\nabla^m~(\mbox{or}\,~D^m)=\partial_{x_1}^{m_1}\partial_{x_2}^{m_2}\partial_{x_3}^{m_3}\quad  \mbox{with} \quad m_1+m_2+m_3=m.
$$
\item Let $u,v$ be two vector fields, we define second order tensor product
$$
u\otimes v=(u_iv_j)_{1\leq i,j\leq 3}
$$
and $v\cdot\nabla u=v_i\cdot\partial_iu$. Throughout the paper, we use Einstein's convention for the sum on repeated indices.
\item The  notation $X = O(Y)$ means $X \lesssim Y$, i.e., there exists a positive constant $C$ such that $|X| \leq CY$.  We also use the notation $X \lesssim_j Y$, which means that $|X| \leq C_jY$ for some $C_j>0$ depending only $j$.
\item  The usual parabolic cylinder is defined  as
$$Q_r(a,s)\triangleq \{(x,t)~:~|x-a|<r,~s-r^2<t<s\}.$$
If $x_0 \in \R^3$ and $R > 0$, we use $B_R(x_0)$ to denote the ball $B(x_0, R)=\{ x \in \R^3: |x-x_0| \leq R \}$.
	\item We use the mixed Lebesgue norms
$$ \| v \|_{L^q_t L^r_x(I \times \Omega)} \triangleq \left( \int_I \| v(t) \|_{L^r_x(\Omega)}^q\ \mathrm{d}t \right)^{1/q}$$
where
$$ \| v(t) \|_{L^r_x(\Omega)} \triangleq  \left( \int_{\Omega} |v(t,x)|^r\ \mathrm{d}x \right)^{1/r}$$
with the usual modifications when $q=\infty$ or $r=\infty$. Here $\Omega\subset \R^3$ is a domain and $I\subset \R$ is a interval.
If $\Omega$ is a bounded domain,  we denote $C^{0,\alpha}(\overline{\Omega}),~0<\alpha\leq 1$, by the space of all uniformly $\alpha$-H\"{o}lder continuous functions
on $\overline{\Omega}$, i.e. functions $f:\overline{\Omega}\rightarrow\R$ for which there exists the constant $C>0$ such that
$$|f(x)-f (y)|\leq C|x-y|^\alpha,\quad\text{for~every}\quad x,y\in\overline{\Omega}.$$
The space $C^{0,\alpha}(\overline{\Omega})$ of $\alpha$-H\"{o}lder continuous functions on $\overline{\Omega}$ with norm
$$\|f\|_{C^{0,\alpha}(\overline{\Omega})}\triangleq \sup_{x\in\overline{\Omega}}|f(x)|+\sup_{x,y\in\overline{\Omega}}
\frac{|f(x)-f(y)|}{|x-y|^\alpha}$$
is a Banach space. The space $C^k(\overline{\Omega})$, of all $k$-times continuously differentiable functions with derivatives up to order $k$ continuous on $\overline{\Omega}$ is a Banach space when equipped with the norm
$$\|f\|_{C^{k}(\overline{\Omega})}\triangleq \sum_{|\alpha|\leq k}\|\nabla^\alpha f\|_{C^{0}(\overline{\Omega})}.$$
$C^{k,\alpha}{(\overline{\Omega})},~0< \alpha \leq 1$, consists of all functions $f\in C^k (\overline{\Omega})$ for which all the $k$th
derivatives are H\"{o}lder continuous with exponent $\alpha$, i.e. $\partial^\gamma f\in C^{0,\alpha}(\overline{\Omega})$ for
every multi-index $\gamma$ with $|\gamma|=k$.

  \item Given a Schwartz function $f: \R^3 \to \R$, we define the Fourier transform
$$ \hat f(\xi) \triangleq \int_{\R^3} f(x) e^{-2\pi i \xi \cdot x}\ \mathrm{d}x\quad \Big(\text{or}~\mathcal{F}(f)(\xi) \triangleq \int_{\R^3} f(x) e^{-2\pi i \xi \cdot x}\ \mathrm{d}x\Big),$$
and then for any $N>0$ we define the Littlewood-Paley projection $P_{\leq N}$ by the formula
$$ \widehat{P_{\leq N} f}(\xi) \triangleq \varphi(\frac{\xi}{N}) \hat f(\xi)$$
where $\varphi: \R^3 \to \R$ is a fixed bump function supported on $B(0,1)$ that equals $1$ on $B(0,\frac{1}{2})$.  We also define the companion Littlewood-Paley projections
\begin{align*}
P_N \triangleq P_{\leq N} - P_{\leq\frac{N}{2}}, \quad
P_{>N} \triangleq \mathbb{I} - P_{\leq N}, \quad
\widetilde{P}_N \triangleq P_{\leq2N} - P_{\leq\frac{N}{4}}
\end{align*}
where $\mathbb{I}$ denotes the identity operator; thus for instance
$$P_{\leq N} f= \sum_{k=0}^\infty P_{2^{-k} N} f,\quad P_{>N} f = \sum_{k=1}^\infty P_{2^k N} f$$
for Schwartz $f$.  Also we have
$$P_N = P_N \widetilde{P}_N.$$
These operators can also be applied to vector-valued Schwartz functions by working component by component.  These operators commute with other Fourier multipliers such as the Laplacian $\Delta$ and its inverse $\Delta^{-1}$, partial derivatives $\partial_i$, heat propagators $e^{t\Delta}$, and the Leray projection
$$\mathbb{P} \triangleq - \nabla \times \Delta^{-1} \nabla \times$$
to divergence-free vector fields. It is important to emphasize that if $N'\sim N''<< N$, then
\begin{align}\label{low20}
\widetilde{P}_{N}(P_{\leq N'}v P_{\leq N''}v)=0.
\end{align}
In fact,
\begin{align*}
&\mathcal{F}\Big[\widetilde{P}_{N}(P_{\leq N'}v P_{\leq N''}v)\Big](\xi)\nonumber\\
&\quad=\mathcal{F}
\Big[\widetilde{P}_{N}(P_{\leq N'}v_i P_{\leq N''}v_j)\Big]\nonumber\\
&\quad=\Big[\varphi(\frac{\xi}{2N'})-\varphi(\frac{4\xi}{N''})\Big]
\mathcal{F}\Big(
P_{\leq N'}v_iP_{\leq N''}v_j\Big)\nonumber\\
&\quad=\Big[\varphi(\frac{\xi}{2N})-\varphi(\frac{4\xi}{N})\Big]\int_{\R^3}
[\mathcal{F}(
P_{\leq N'}v_i)](\xi-\eta)[
\mathcal{F}(P_{\leq N''}v_j)](\eta)\ \mathrm{d}\eta\nonumber\\
&\quad=\Big[\varphi(\frac{\xi}{2N})-\varphi(\frac{4\xi}{N})\Big]\int_{\R^3}
\Big[\varphi\Big(\frac{\xi-\eta}{N'}\Big)\mathcal{F}(v_i)\Big]
\Big[\varphi\Big(\frac{\eta}{N''}\Big)\mathcal{F}(v_j)\Big]\ \mathrm{d}\eta\nonumber\\
&\quad=\Psi_1\cdot\Psi_2,
\end{align*}
with
$$\Psi_1(\xi)\triangleq \varphi(\frac{\xi}{2N})-\varphi(\frac{4\xi}{N}),\quad
\Psi_2(\xi)\triangleq \int_{\R^3}
\Big[\varphi\Big(\frac{\xi-\eta}{N'}\Big)\mathcal{F}(v_i)\Big]
\Big[\varphi\Big(\frac{\eta}{N''}\Big)\mathcal{F}(v_j)\Big]\ \mathrm{d}\eta.$$
It is clear that
$$\mathrm{supp} \Psi_1(\xi)\cap \mathrm{supp} \Psi_2(\xi)=\varnothing$$
due to $N'\sim N''<< N$. This leads to \eqref{low20}.
\end{enumerate}

\subsection{Plan of the paper}
In Section \ref{2.pre}, we discuss the various  tools used in this paper, such as the multiplier theorem, basic estimates of heat operator, the the quantitative $\epsilon$-regularity for higher order derivative, the quantitative Carleman inequality, etc.
Section \ref{7.proof} presents the proof of the main results of this paper.
Section \ref{6.upbound} derives the  upper bound of $N_0$ under the $L^3$ critical bounds of $(v,H)$ which is the core of this paper. In particular:
\begin{itemize}
\item In Section \ref{3.basic}, we present the basic estimates, which are built upon the breakthrough work by Tao \cite{MR4337421}.
\item In Section \ref{4.epoch}, we introduce new methods for proving the epoch regularity estimates of $v$ and $H$ and the annuli regularity estimates, both of which are crucial for proving the main results in Section \ref{6.upbound}.
\item In Section \ref{5.bubble}, we construct a sequence of frequency bubbles, which are essential for proving the upper bound of $N_0$.
\item In Section \ref{pf-N0}, we devote to the proof of Proposition \ref{main-est}. The core of the proof is that we transform the lower bound of $W_c$ into that of $v$ and $H$.  Notice that unlike Tao's method, which relies on the treatment of the vorticity equation, the direct application of Tao's method to convert the concentration compactness of $\omega$ and $J$ into that of $v$ and $H$ is no longer valid due to the influence of $R(v,H)$ in $\eqref{wJeq}_2$. To solve this problem, we consider the equation of $\nabla H$ and make use of the scaling invariant property of the system \eqref{mhdeq} to fix the blocks due to the fact that scales are inconsistent between the magnetic field $H$ and the vorticity $w$.
\end{itemize}
In the final Appendix \ref{app.a}, we primarily  devote to the proof of Lemma \ref{theo.higher}.

\section{Preliminaries}\label{2.pre}
Due to the localized technique in frequency space adopted in this paper, the multiplier theorem and the heat kernel estimate below will play an important role in this paper. We, for their  proofs, refer the interested reader to \cite{MR4337421}.

\subsection{Multiplier theorem, some basic estimates of the heat operator and the quantitative Carleman inequality}
\begin{lem}\textsuperscript{{\rm \cite{MR4337421}}}[Multiplier theorem]\label{mult}
Let $N>0$, and let $m: \R^3 \to \mathbb{C}$ be a smooth function supported on $B(0,N)$ that obeys the bounds
$$ |\nabla^j m(\xi)| \leq M N^{-j}$$
for all $0 \leq j \leq 100$ and some positive constants $M$.  Let $T_m$ denote the associated Fourier multiplier, i.e.
$$ \widehat{T_m f}(\xi) \triangleq m(\xi) f(\xi).$$
Then one has
\begin{equation*}
 \| T_m f \|_{L^q(\R^3)} \lesssim M N^{\frac{3}{p}-\frac{3}{q}} \|f\|_{L^p(\R^3)}
\end{equation*}
whenever $1 \leq p \leq q \leq \infty$ and $f: \R^3 \to \mathbb{R}$ is a Schwartz function. In particular, if $\Omega \subset \R^3$ is an open subset of $\R^3$, $A \geq 1$, and
$$\Omega_{\frac{A}{N}} \triangleq \{ x \in \R^3: \mathrm{dist}(x,\Omega) < \frac{A}{N}\},$$
then we have
\begin{equation}\label{local}
 \| T_m f \|_{L^{q_1}(\Omega)} \lesssim M N^{\frac{3}{p_1}-\frac{3}{q_1}} \|f\|_{L^{p_1}(\Omega_{\frac{A}{N}})} + A^{-50} M |\Omega|^{\frac{1}{q_1}-\frac{1}{q_2}} N^{\frac{3}{p_2}-\frac{3}{q_2}} \|f\|_{L^{p_2}(\R^3)}
\end{equation}
 whenever $1 \leq p_1 \leq q_1 \leq \infty$ and $1 \leq p_2 \leq q_2 \leq \infty$ are such that $q_2 \geq q_1$, and $|\Omega|$ denotes the volume of $\Omega$.
Thus for instance, we have the Bernstein inequalities
\begin{equation}\label{bern}
 \| \nabla^j f \|_{L^q(\R^3)} \lesssim_j N^{j + \frac{3}{p} - \frac{3}{q}} \|f\|_{L^p(\R^3)}
\end{equation}
whenever $1 \leq p \leq q \leq \infty$, $j \geq 0$, and $f$ is a Schwartz function whose Fourier transform is supported on $B(0,N)$, and from this, we drive
\begin{equation}\label{bern-2}
 \| P_N e^{t\Delta} \nabla^j f \|_{L^q(\R^3)} \lesssim_j \exp( - N^2 t / 20 ) N^{j + \frac{3}{p} - \frac{3}{q}} \|f\|_{L^p(\R^3)},
\end{equation}
for any $t>0$.  Summing this, we obtain the standard heat kernel bounds
\begin{equation}\label{bern-3}
 \| e^{t\Delta} \nabla^j f \|_{L^q(\R^3)} \lesssim_j t^{-\frac{j}{2} - \frac{3}{2p} + \frac{3}{2q}} \|f\|_{L^p(\R^3)}.
\end{equation}
\end{lem}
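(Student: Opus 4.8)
The plan is to deduce every estimate in the lemma from a single pointwise decay bound on the convolution kernel of $T_m$, so the argument hinges on one integration-by-parts computation followed by routine applications of Young's and H\"{o}lder's inequalities.

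\emph{Step 1: the kernel bound.} First I would rescale the symbol: writing $m(\xi)=\mu(\xi/N)$, the hypotheses become that $\mu$ is supported on $B(0,1)$ with $|\nabla^k\mu|\le M$ for $0\le k\le 100$, and $T_mf=K\ast f$ with $K(x)=N^3\check\mu(Nx)$, where $\check\mu(y)=\int_{\R^3}\mu(\xi)e^{2\pi i\xi\cdot y}\,\mathrm{d}\xi$. For $|y|\ge1$ I would integrate by parts $100$ times along the direction $y/|y|$, using $\partial_{y/|y|}e^{2\pi i\xi\cdot y}=2\pi i|y|e^{2\pi i\xi\cdot y}$; each step contributes a factor $(2\pi|y|)^{-1}$ and places one derivative on $\mu$, giving $|\check\mu(y)|\lesssim M(2\pi|y|)^{-100}$, while the trivial estimate $|\check\mu(y)|\lesssim M$ covers $|y|\le1$. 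Hence $|K(x)|\lesssim MN^3(1+N|x|)^{-100}$ on $\R^3$, and therefore $\|K\|_{L^r(\R^3)}\lesssim MN^{3-3/r}$ for every $1\le r\le\infty$, since $(1+|y|)^{-100}\in L^r(\R^3)$ for all such $r$.

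\emph{Step 2: global and localized bounds.} For $1\le p\le q\le\infty$ I would define $r$ by $1+\tfrac1q=\tfrac1r+\tfrac1p$; then $1\le r\le\infty$ and $3-\tfrac3r=\tfrac3p-\tfrac3q$, so Young's convolution inequality yields $\|T_mf\|_{L^q}\le\|K\|_{L^r}\|f\|_{L^p}\lesssim MN^{3/p-3/q}\|f\|_{L^p}$, which is the first assertion. For \eqref{local} I would fix $x\in\Omega$ and split $f=f\one_{\Omega_{A/N}}+f\one_{\R^3\setminus\Omega_{A/N}}$: the near part is $K\ast(f\one_{\Omega_{A/N}})$, whose $L^{q_1}(\R^3)$-norm is at most $\|K\|_{L^{r_1}}\|f\|_{L^{p_1}(\Omega_{A/N})}\lesssim MN^{3/p_1-3/q_1}\|f\|_{L^{p_1}(\Omega_{A/N})}$ (with $r_1$ built from $(p_1,q_1)$ as above), while in the far part the constraint $x\in\Omega$, $y\notin\Omega_{A/N}$ forces $N|x-y|\ge A$, so $|K(x-y)|\lesssim MN^3A^{-50}(1+N|x-y|)^{-50}$; estimating the resulting convolution in $L^{q_2}(\R^3)$ by Young against a kernel in $L^s$ (with $1+\tfrac1{q_2}=\tfrac1s+\tfrac1{p_2}$, so that $N^3\cdot N^{-3/s}=N^{3/p_2-3/q_2}$) and then passing from $L^{q_2}(\Omega)$ down to $L^{q_1}(\Omega)$ by H\"{o}lder (allowed because $q_2\ge q_1$, at the price of $|\Omega|^{1/q_1-1/q_2}$) reproduces precisely the second term of \eqref{local}.

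\emph{Step 3: Bernstein and heat estimates.} For \eqref{bern} I would fix a bump $\eta$ equal to $1$ on $B(0,1)$ and supported on $B(0,2)$; if $\widehat f$ is supported on $B(0,N)$ then $\nabla^jf=T_mf$ with $m(\xi)=(2\pi i\xi)^{\otimes j}\eta(\xi/N)$, supported on $B(0,2N)$ with $|\nabla^km|\lesssim_jN^j(2N)^{-k}$, and the bound just proved (with $M\sim_jN^j$ and $N$ replaced by $2N$) gives \eqref{bern}. For \eqref{bern-2} I note $P_Ne^{t\Delta}\nabla^jf=T_mf$ with $m(\xi)=\big(\varphi(\tfrac\xi N)-\varphi(\tfrac{2\xi}N)\big)e^{-4\pi^2t|\xi|^2}(2\pi i\xi)^{\otimes j}$, which is supported where $N/4\le|\xi|\le N$; there $e^{-4\pi^2t|\xi|^2}\lesssim e^{-N^2t/20}$, and since $k$ derivatives landing on the Gaussian produce at most a factor $\lesssim_kN^{-k}(1+tN^2)^k$, which is absorbed via $(1+tN^2)^ke^{-4\pi^2tN^2/16}\lesssim_ke^{-N^2t/20}$, one can write $m=e^{-N^2t/20}\widetilde m$ with $|\nabla^k\widetilde m|\lesssim_jN^jN^{-k}$ uniformly in $t$, so that \eqref{bern} applied to $\widetilde m$ gives \eqref{bern-2}. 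Finally \eqref{bern-3} follows by writing $e^{t\Delta}\nabla^jf=\sum_{k\in\ZZ}P_{2^k}e^{t\Delta}\nabla^jf$ for Schwartz $f$, applying \eqref{bern-2} term by term, and summing the series $\sum_ke^{-2^{2k}t/20}2^{k(j+3/p-3/q)}\lesssim_jt^{-j/2-3/(2p)+3/(2q)}$, which converges since $j+\tfrac3p-\tfrac3q\ge0$; in the borderline case $j=0$, $p=q$, \eqref{bern-3} is just the contraction estimate $\|e^{t\Delta}f\|_{L^p}\le\|f\|_{L^p}$.

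The principal obstacle is Step 1: one must squeeze rapid decay of $\check\mu$ out of uniform control on merely the \emph{first hundred} derivatives of $\mu$ (which is exactly why the hypothesis is phrased with $0\le j\le100$), and then, for the localized estimate, keep careful track of both the gain $A^{-50}$ and the volume factor $|\Omega|^{1/q_1-1/q_2}$ through the near/far decomposition. Once the kernel bound is available, Steps 2 and 3 are essentially bookkeeping with Young's inequality, H\"{o}lder's inequality, and the elementary summation of a geometric-times-Gaussian dyadic series.
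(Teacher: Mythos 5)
Your proof is correct, and it follows essentially the same route as the source the paper cites for this lemma (Tao's paper; the present paper gives no proof and simply refers to \cite{MR4337421}): the rescaled kernel bound $|K(x)|\lesssim MN^3(1+N|x|)^{-100}$ via repeated integration by parts, Young's inequality for the global and near-part estimates, the $A^{-50}$ gain plus H\"older in $|\Omega|$ for the far part, and the standard symbol manipulations and dyadic summation for \eqref{bern}--\eqref{bern-3}. You also correctly isolate the only delicate point in the summation, the borderline case $j=0$, $p=q$, which you handle by the $L^p$ contraction property of the heat semigroup.
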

In the process of proving Lemma \ref{theo.higher}, the Lemmas \ref{lem.heat1}-\ref{lem.heat2-1} below are important tools. The proof of Lemma \ref{lem.heat1}-\ref{lem.heat2-1} are presented on page 397 of \cite{MR3616490}, and we omitted it's proof.
\begin{lem}\textsuperscript{{\rm \cite{MR3616490}}}\label{lem.heat1}
Assume that $l,~l',~r,~r'$ satisfy either
\begin{align*}
\begin{cases}
1\leq l\leq r\leq\infty, & 1<l'\leq r'<\infty,\\
\frac{n}{l}+\frac{2}{l'}\leq\frac{n}{r}+\frac{2}{r'}+2,
\end{cases}
\end{align*}
or
\begin{align*}
\begin{cases}
1\leq l\leq r\leq\infty, & 1<l'\leq r'=\infty,\\
\frac{n}{l}+\frac{2}{l'}<\frac{n}{r}+\frac{2}{r'}+2.
\end{cases}
\end{align*}
Then for any $T\in(0,\infty)$, there exists $c=c(n,~l,~l',~r,~r')$ such that
$$\|(\partial_t-\Delta)^{-1}f\|_{L_t^{r'}L_x^{r}((-T,0]\times \R^3)}\leq c\|f\|_{L_t^{l'}L_x^{l}((-T,0]\times \R^3)}.$$
\end{lem}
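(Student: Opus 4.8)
The plan is to localize the estimate to the time variable. Since the heat semigroup already provides smoothing in $x$, taking the $L^r_x$-norm pointwise in $t$ reduces the claim to a one-dimensional convolution inequality, which is then dispatched by Young's inequality and the Hardy-Littlewood-Sobolev inequality. First I would fix the meaning of the operator: for $f$ supported on $(-T,0]$ put
$$(\partial_t-\Delta)^{-1}f(t,\cdot)=\int_{-T}^{t}e^{(t-s)\Delta}f(s,\cdot)\,\mathrm{d}s,\qquad t\in(-T,0],$$
i.e. the Duhamel solution of $(\partial_t-\Delta)u=f$ with $u(-T)=0$ (equivalently, the space-time convolution with the heat kernel, extended by zero to $\{t>0\}$). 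By Minkowski's integral inequality in $x$ and then the heat bound \eqref{bern-3} with $j=0$,
$$\big\|(\partial_t-\Delta)^{-1}f(t)\big\|_{L^r_x}\le\int_{-T}^{t}\big\|e^{(t-s)\Delta}f(s)\big\|_{L^r_x}\,\mathrm{d}s\lesssim\int_{-T}^{t}(t-s)^{-\gamma}\,\|f(s)\|_{L^l_x}\,\mathrm{d}s,$$
where $\gamma:=\tfrac{n}{2}\big(\tfrac{1}{l}-\tfrac{1}{r}\big)$ (here $n=3$). Setting $g(s):=\|f(s)\|_{L^l_x}\,\one_{(-T,0]}(s)$ and $K(\tau):=\tau^{-\gamma}\,\one_{(0,2T)}(\tau)$, the last display reads $\|(\partial_t-\Delta)^{-1}f(t)\|_{L^r_x}\lesssim(K*g)(t)$, so it suffices to prove the scalar inequality $\|K*g\|_{L^{r'}(\R)}\lesssim\|g\|_{L^{l'}(\R)}$ with a constant depending only on $n,l,l',r,r'$ (and, in the non-sharp ranges, on $T$).

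For the one-dimensional bound I would split according to the scaling relation, the point being that the admissibility conditions on $(l,l',r,r')$ are precisely the integrability thresholds of the time kernel $\tau^{-\gamma}$. Define $p$ by $1+\tfrac{1}{r'}=\tfrac{1}{p}+\tfrac{1}{l'}$; then $1\le p<\infty$ (using $l'\le r'$ and $l'>1$), and a direct computation shows that $\gamma p<1$ precisely when the scaling inequality $\tfrac{n}{l}+\tfrac{2}{l'}<\tfrac{n}{r}+\tfrac{2}{r'}+2$ is strict, while $\gamma p=1$ in the borderline case $\tfrac{n}{l}+\tfrac{2}{l'}=\tfrac{n}{r}+\tfrac{2}{r'}+2$. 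In the strict case (which is all of the second alternative of the hypothesis and part of the first), the truncation to $(0,2T)$ makes $K\in L^p(\R)$, and Young's convolution inequality gives $\|K*g\|_{L^{r'}}\le\|K\|_{L^p}\|g\|_{L^{l'}}$. In the borderline case of the first alternative, $K$ lies only in the weak space $L^{p,\infty}(\R)$, and I would instead invoke the one-dimensional Hardy-Littlewood-Sobolev inequality, which applies exactly under the hypotheses $1<l'\le r'<\infty$, to obtain $\|K*g\|_{L^{r'}}\lesssim\|g\|_{L^{l'}}$. Finally, a density argument (smooth, compactly supported $f$, then general $f\in L^{l'}_tL^l_x$) together with uniqueness for the heat equation identifies $(\partial_t-\Delta)^{-1}f$ with the above Duhamel integral and completes the proof.

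The substantive steps---the Duhamel representation, Minkowski's inequality, the semigroup bound \eqref{bern-3}, and the two scalar convolution inequalities---are routine; the one place that deserves care is the bookkeeping that matches the exponent hypotheses to the integrability of the time kernel $\tau^{-\gamma}$. Concretely, one must check that $\gamma<1$ whenever the kernel must be integrated up to $\tau=0$, that the Young exponent $p$ lies in $[1,\infty)$, and that on the critical scaling surface the weak-type (Hardy-Littlewood-Sobolev) estimate is genuinely available---which is exactly why the restrictions $1<l'$ and $r'<\infty$ (respectively the strict scaling inequality when $r'=\infty$) appear in the statement. The truncation of the time kernel to the bounded window $(0,2T)$---legitimate since the time interval has length $T$---is what renders $\|K\|_{L^p}$ finite in the non-sharp regimes and accounts for the $T$-dependence of $c$ there, whereas on the critical line $c$ may be taken independent of $T$.
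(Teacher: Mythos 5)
The paper itself does not prove this lemma: it is quoted from \cite{MR3616490} with the proof explicitly omitted, so there is no internal argument to compare against, and your proposal must stand on its own. Your strategy --- Duhamel representation, Minkowski's inequality in $x$, the semigroup bound \eqref{bern-3}, and reduction to a one-dimensional convolution with the kernel $\tau^{-\gamma}$, $\gamma=\tfrac n2(\tfrac1l-\tfrac1r)$ --- is the standard route and your bookkeeping is correct on most of the admissible range: whenever the scaling inequality is strict, or holds with equality but $l'<r'$, the Young ($\gamma p<1$) and Hardy--Littlewood--Sobolev ($\gamma p=1$, $p>1$) dichotomy works exactly as you describe.

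The gap is the diagonal critical case $l'=r'$ combined with equality $\tfrac nl+\tfrac2{l'}=\tfrac nr+\tfrac2{r'}+2$, which the first alternative of the hypotheses permits (e.g.\ $n=3$, $l=6/5$, $r=6$, $l'=r'=2$). There $\gamma=1$ and $p=1$, and your assertion that one-dimensional Hardy--Littlewood--Sobolev ``applies exactly under the hypotheses $1<l'\le r'<\infty$'' is false: HLS requires the output exponent to be \emph{strictly} larger than the input one, since the order of fractional integration $1-\gamma$ must be positive. Worse, the reduction itself breaks down here: after Minkowski the bound reads $\int_{-T}^{t}(t-s)^{-1}\|f(s)\|_{L^l_x}\,\mathrm{d}s$, which is generically $+\infty$, so no scalar convolution inequality can recover the estimate. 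This case is genuinely of Calder\'on--Zygmund type --- one needs parabolic maximal regularity $\|\nabla^2(\partial_t-\Delta)^{-1}f\|_{L^{l'}_tL^l_x}\lesssim\|f\|_{L^{l'}_tL^l_x}$ (valid for $1<l,l'<\infty$) followed by the Sobolev embedding $\dot{W}^{2,l}\hookrightarrow L^r$, i.e.\ an argument exploiting cancellation in the space-time kernel rather than only its size. For the purposes of this paper the omission happens to be harmless, since every application of the lemma in Appendix A uses strictly subcritical exponent quadruples, but as a proof of the lemma as stated your argument is incomplete in that borderline case.
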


\begin{lem}\textsuperscript{{\rm \cite{MR3616490}}}\label{lem.heat2}
Assume that $m,~m',~r,~r'$ satisfy either
\begin{align*}
\begin{cases}
1\leq m\leq r\leq\infty, & 1<m'\leq r'<\infty,\\
\frac{n}{m}+\frac{2}{m'}\leq\frac{n}{r}+\frac{2}{r'}+1,
\end{cases}
\end{align*}
or
\begin{align*}
\begin{cases}
1\leq m\leq r\leq\infty, & 1<m'\leq r'=\infty,\\
\frac{n}{m}+\frac{2}{m'}<\frac{n}{r}+\frac{2}{r'}+1.
\end{cases}
\end{align*}
Then for any $T\in(0,\infty)$, there exists $c=c(n,~m,~m',~r,~r')$ such that
$$\|(\partial_t-\Delta)^{-1}\nabla f\|_{L_t^{r'}L_x^{r}((-T,0]\times \R^3)}\leq c\|f\|_{L_t^{m'}L_x^{m}((-T,0]\times \R^3)}.$$
\end{lem}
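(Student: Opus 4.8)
The plan is to use Duhamel's formula to turn the estimate into a one-dimensional convolution inequality in time (after inserting the pointwise heat-kernel bounds already recorded in Lemma~\ref{mult}), and then to invoke Young's inequality and the Hardy-Littlewood-Sobolev inequality; only a single borderline configuration will require a genuinely different, maximal-regularity, argument.

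\textbf{Step 1 (Duhamel; reduction to a scalar convolution).} Let $u\triangleq(\partial_t-\Delta)^{-1}\nabla f$ denote the solution of $\partial_t u-\Delta u=\nabla f$ on $(-T,0]\times\R^n$ with $u(-T,\cdot)=0$, so that $u(t)=\int_{-T}^{t}e^{(t-s)\Delta}\nabla f(s)\,\mathrm{d}s$. By \eqref{bern-3} with $j=1$, for every $s<t$ one has
$$\|e^{(t-s)\Delta}\nabla f(s)\|_{L^r_x}\lesssim(t-s)^{-\alpha}\,\|f(s)\|_{L^m_x},\qquad \alpha\triangleq\tfrac12+\tfrac n2\Big(\tfrac1m-\tfrac1r\Big),\quad 1\le m\le r\le\infty.$$
Hence, writing $F(s)\triangleq\|f(s)\|_{L^m_x}\,\one_{(-T,0)}(s)$ and $K(\tau)\triangleq\tau^{-\alpha}\one_{(0,T)}(\tau)$, we get the pointwise-in-time domination $\|u(t)\|_{L^r_x}\lesssim(K*F)(t)$, and the lemma reduces to the scalar inequality $\|K*F\|_{L^{r'}(\R)}\lesssim\|F\|_{L^{m'}(\R)}$.

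\textbf{Step 2 (the subcritical and critical ranges).} The arithmetic heart of the matter is that the hypotheses force $\alpha\le1$: combining $\tfrac nm+\tfrac2{m'}\le\tfrac nr+\tfrac2{r'}+1$ with $m'\le r'$ gives $n(\tfrac1m-\tfrac1r)\le1$. If the exponent inequality is \emph{strict}, then $\alpha<1$ and the number $q$ defined by $\tfrac1q=1+\tfrac1{r'}-\tfrac1{m'}$ satisfies $1\le q<1/\alpha$, so $K\in L^q((0,T))$ and Young's convolution inequality gives $\|K*F\|_{r'}\le\|K\|_q\|F\|_{m'}$; the bookkeeping $\tfrac1q>\alpha\Leftrightarrow\tfrac nm+\tfrac2{m'}<\tfrac nr+\tfrac2{r'}+1$ shows this covers all of the second hypothesis block (in particular $r'=\infty$) together with the strict part of the first. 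If instead the exponent inequality is an \emph{equality} with $m'<r'$ — equivalently $1<m'<r'<\infty$ and $\beta\triangleq1-\alpha=\tfrac1{m'}-\tfrac1{r'}\in(0,1)$ — then $K(\tau)\le c\,|\tau|^{-(1-\beta)}$, and the one-dimensional Hardy-Littlewood-Sobolev inequality (exponent relation $\tfrac1{r'}=\tfrac1{m'}-\beta$ holding by construction, admissible range $1<m'<r'<\infty$) finishes this case.

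\textbf{Step 3 (the borderline $\alpha=1$: the main obstacle).} The only configuration left uncovered is the equality case with $m'=r'$, i.e., $\alpha=1$, $m'=r'=:p\in(1,\infty)$, $n(\tfrac1m-\tfrac1r)=1$. Now $K(\tau)=\tau^{-1}\one_{(0,T)}(\tau)\notin L^1$, so replacing $e^{(t-s)\Delta}\nabla f(s)$ by $\|f(s)\|_{L^m_x}$ pointwise in $s$ is too lossy (it costs a logarithm), and one must keep the joint space-time structure. I would instead treat $(\partial_t-\Delta)^{-1}\nabla$ directly as a Fourier multiplier, factoring it as $\nabla(-\Delta)^{-1/2}\circ(-\Delta)^{1/2}(\partial_t-\Delta)^{-1}$: the Riesz-type operator $\nabla(-\Delta)^{-1/2}$ is a Calder\'{o}n-Zygmund operator, hence bounded on $L^p_tL^\ell_x$ for all $1<p,\ell<\infty$, while the remaining factor is controlled by the parabolic maximal-regularity bound $\|\nabla^2(\partial_t-\Delta)^{-1}g\|_{L^p_tL^m_x}\lesssim\|g\|_{L^p_tL^m_x}$; composing with the spatial Sobolev embedding $\dot W^{1,m}(\R^n)\hookrightarrow L^r(\R^n)$, valid exactly in the range $\tfrac1r=\tfrac1m-\tfrac1n$ that remains, yields the borderline estimate. (Alternatively, the whole lemma can be handled uniformly by writing $(\partial_t-\Delta)^{-1}\nabla f=\Phi*f$ with the parabolic Riesz-kernel bound $|\Phi(t,x)|\lesssim(|t|^{1/2}+|x|)^{-(n+1)}\one_{\{t>0\}}$ and invoking the mixed-norm (anisotropic) Hardy-Littlewood-Sobolev theorem, the two hypothesis blocks being precisely its scaling and admissibility conditions.) I expect this borderline $L^\infty$-type endpoint to be where the proof is genuinely delicate: the elementary time-convolution argument breaks down there, and one must bring in maximal regularity or the anisotropic potential estimate.
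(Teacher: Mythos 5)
A preliminary remark on comparison: the paper contains no proof of Lemma \ref{lem.heat2} at all — it is quoted from \cite{MR3616490} with the proof explicitly omitted — so your argument can only be judged on its own terms. On those terms, Steps 1 and 2 are correct and complete: the reduction via Duhamel, Minkowski's inequality and the smoothing bound \eqref{bern-3} to the scalar convolution with $K(\tau)=\tau^{-\alpha}\one_{(0,T)}(\tau)$, $\alpha=\tfrac12+\tfrac n2(\tfrac1m-\tfrac1r)$, the observation that the hypotheses force $\alpha\le1$, Young's inequality in all strictly subcritical configurations (including the whole second block with $r'=\infty$), and one-dimensional Hardy--Littlewood--Sobolev in the critical configurations with $m'<r'$ all check out. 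One caveat you should state: in the subcritical cases your constant is $\|K\|_{L^q(0,T)}\sim T^{1/q-\alpha}$ and hence depends on $T$; this is unavoidable (for instance $n=3$, $m=r=m'=r'=2$, with data oscillating at spatial frequency $\sim T^{-1/2}$, forces the constant to grow like $T^{1/2}$), so the phrase ``$c=c(n,m,m',r,r')$'' in the statement cannot mean uniformity in $T$ except in the scale-critical case.

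The genuine gap is in Step 3, and it is partly a defect of the statement as transcribed. Your maximal-regularity argument (boundedness of $\nabla(\partial_t-\Delta)^{-1}\nabla$ on $L^{p}_tL^m_x$ for $1<p,m<\infty$, followed by $\dot W^{1,m}(\R^n)\hookrightarrow L^r(\R^n)$) settles the borderline case $m'=r'$ with equality only for $1<m<n$, whereas the hypotheses as written also admit the equality cases $m=1$, $r=\tfrac{n}{n-1}$ and $m=n$, $r=\infty$; your claim that the Sobolev range is ``exactly the range that remains'' is therefore not accurate, since Calder\'on--Zygmund/maximal regularity fails on $L^1_x$ and $\dot W^{1,n}\not\hookrightarrow L^\infty$. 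Moreover, no argument can close these two leftover cases, because the inequality is then false: taking $f(s,x)=g_\epsilon(x)$ an approximate identity in $x$ and constant in time, $u(t)$ converges as $\epsilon\to0$ to $\nabla\int_0^{t+T}\Phi(\tau,\cdot)\,\mathrm{d}\tau\approx -c\,x|x|^{-n}$ near the origin (a Newtonian-potential gradient), whose $L^{n/(n-1)}_x$ norm diverges logarithmically while $\|f\|_{L^{m'}_tL^1_x}$ stays bounded; similarly $g(x)=|x-x_0|^{-1}\bigl(\log\tfrac{1}{|x-x_0|}\bigr)^{-\theta}$ restricted to a half-space, with $\tfrac1n<\theta\le1$, defeats the $m=n$, $r=\infty$ endpoint. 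So either these borderline endpoint configurations must be excluded (or replaced by strict inequality), presumably as in the source; with that proviso your proof is correct and follows the standard route, and the claimed triple of tools (Young, HLS, mixed-norm maximal regularity plus Sobolev) is exactly what is needed.
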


\begin{lem}\textsuperscript{{\rm \cite{MR3616490}}}\label{lem.heat2-1}
{\rm (i)} Let $f, g\in L_t^{\infty}L_x^{\infty}((-T,0]\times \R^3)$ and $T\in(0,\infty)$, then
\begin{align*}
&\|(\partial_t-\Delta)^{-1}(\nabla f+g)\|_{L_t^{\infty}C_x^{0,\alpha}((-T,0]\times \R^3)}\\
&\quad\leq c(\|f\|_{L_t^{\infty}L_x^{\infty}((-T,0]\times \R^3)}+\|g\|_{L_t^{\infty}L_x^{\infty}((-T,0]\times \R^3)})
\end{align*} 
for any $0<\alpha<1$.

{\rm (ii)}  Let $f, g\in L_t^{\infty}C^{k,\alpha}_x((-T,0]\times \R^3)$ with $k=0,1,2,\cdots$, then
\begin{align*}
&\|(\partial_t-\Delta)^{-1}(\nabla f+g)\|_{L_t^{\infty}C_x^{k+1}((-T,0]\times \R^3)}\\
&\quad\leq c(\|f\|_{L_t^{\infty}C^{k,\alpha}_x((-T,0]\times \R^3)}
+\|g\|_{L_t^{\infty}C^{k,\alpha}_x((-T,0]\times \R^3)})
\end{align*} 
for any $0<\alpha<1$.
\end{lem}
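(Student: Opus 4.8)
\textbf{Proof proposal for Lemma \ref{lem.heat2-1}.}

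The plan is to realize $(\partial_t-\Delta)^{-1}(\nabla f+g)$ via the Duhamel/heat-kernel representation and then estimate the two pieces by direct kernel bounds, exactly as in the Schauder theory for the heat equation. Write $u(t,\cdot)=\int_{-T}^t e^{(t-s)\Delta}(\nabla f(s)+g(s))\,\mathrm{d}s$, so that $u$ solves $(\partial_t-\Delta)u=\nabla f+g$ on $(-T,0]\times\R^3$ with $u(-T)=0$. Split $u=u_1+u_2$ where $u_1$ carries the $\nabla f$ term and $u_2$ the $g$ term. For $u_2$ the bound $\|u_2(t)\|_{L^\infty_x}\le\int_{-T}^t\|g(s)\|_{L^\infty_x}\,\mathrm{d}s\lesssim T\|g\|_{L^\infty_tL^\infty_x}$ is immediate from the heat semigroup being an $L^\infty$-contraction; for $u_1$ one integrates by parts to put the derivative on the kernel, $u_1(t,x)=\int_{-T}^t\int_{\R^3}\nabla_y K(t-s,x-y)\cdot f(s,y)\,\mathrm{d}y\,\mathrm{d}s$, and uses $\int_{\R^3}|\nabla K(\tau,z)|\,\mathrm{d}z\lesssim\tau^{-1/2}$, which is integrable in $s$ over a finite interval, giving $\|u_1(t)\|_{L^\infty_x}\lesssim T^{1/2}\|f\|_{L^\infty_tL^\infty_x}$. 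This handles the $L^\infty$ part of both (i) and (ii).

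For the Hölder seminorm in part (i), fix $x,x'$ with $|x-x'|=h$ and split the time integral at $t-h^2$. On the near-diagonal part $s\in(t-h^2,t)$ one bounds each of $u_1(t,x)-u_1(t,x')$ and $u_2(t,x)-u_2(t,x')$ by twice the corresponding $L^\infty$ contribution restricted to that short interval, which is $\lesssim h\|f\|_\infty+h^2\|g\|_\infty\lesssim h^\alpha(\|f\|_\infty+\|g\|_\infty)$ for $\alpha\le1$ and $h$ bounded. On the far part $s\in(-T,t-h^2)$ one uses the mean value theorem in the kernel: $|\nabla_y K(t-s,x-y)-\nabla_y K(t-s,x'-y)|\lesssim h\,\sup_z|\nabla^2 K(t-s,z-y)|$ and similarly one derivative less for the $g$-kernel, then integrates the resulting bounds $\int|\nabla^2K(\tau,\cdot)|\lesssim\tau^{-1}$ and $\int|\nabla K(\tau,\cdot)|\lesssim\tau^{-1/2}$ over $\tau=t-s\in(h^2,T+t)$. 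The $g$-piece gives $\lesssim h\int_{h^2}^{T}\tau^{-1/2}\,\mathrm{d}\tau\lesssim h\,T^{1/2}$, and the $f$-piece gives $\lesssim h\int_{h^2}^{T}\tau^{-1}\,\mathrm{d}\tau\lesssim h\log(T/h^2)\lesssim_{\alpha} h^\alpha$ for any $\alpha<1$ (absorbing the logarithm, which is where the strict inequality $\alpha<1$ is used). Summing gives $[u]_{C^{0,\alpha}_x}\lesssim\|f\|_\infty+\|g\|_\infty$ uniformly in $t$, proving (i).

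For part (ii) I would argue by induction on $k$ together with the commutation $\partial_{x_i}(\partial_t-\Delta)^{-1}=(\partial_t-\Delta)^{-1}\partial_{x_i}$. Differentiating $k$ times, $\nabla^k u=(\partial_t-\Delta)^{-1}(\nabla(\nabla^k f)+\nabla^k g)$, and since $\nabla^k f,\nabla^k g\in L^\infty_tC^{0,\alpha}_x$ by hypothesis, part (i) gives $\nabla^k u\in L^\infty_tC^{0,\alpha}_x$ with the stated bound; one more derivative, estimated as in the $L^\infty$ step above with the kernel bound $\int|\nabla^2K(\tau,\cdot)|\,\mathrm{d}z\lesssim\tau^{-1}$ (integrable against a Hölder-continuous factor after subtracting the value at the singularity), upgrades this to control of $\nabla^{k+1}u$ in $C^0_x$, which is exactly $\|u\|_{L^\infty_tC^{k+1}_x}$. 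The only delicate point — and the main obstacle — is the logarithmic divergence in the $f$-contribution to the Hölder seminorm: the singular kernel $\nabla^2 K$ has borderline non-integrable size $\tau^{-1}$, so one must genuinely exploit the cancellation $\int_{\R^3}\nabla^2 K(\tau,z)\,\mathrm{d}z=0$ (subtracting $f(s,x)$ before integrating) on the far part, and accept the loss of the endpoint $\alpha=1$; everything else is routine heat-kernel bookkeeping. Since the cited reference \cite{MR3616490} records these estimates, I would present this argument only in outline and defer the elementary kernel computations to that source.
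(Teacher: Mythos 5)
Your argument is essentially correct, and there is nothing in the paper to compare it against line by line: the authors do not prove Lemma \ref{lem.heat2-1} at all, but cite it from \cite{MR3616490} (``the proof \dots are presented on page 397 of \cite{MR3616490}''), where it is established by exactly the kind of heat-potential/Schauder estimates you outline. Your Duhamel representation with the derivative thrown onto the kernel, the splitting of the time integral at $t-h^2$, the bounds $\int|\nabla K(\tau,\cdot)|\,\mathrm{d}z\lesssim\tau^{-1/2}$, $\int|\nabla^2K(\tau,\cdot)|\,\mathrm{d}z\lesssim\tau^{-1}$, and the absorption of the logarithm $h\log(T/h^2)\lesssim_\alpha h^\alpha$ for $\alpha<1$ are the standard route and give (i) with constants depending on $T$ and $\alpha$, which is all that is claimed. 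One small clarification: the cancellation $\int_{\R^3}\nabla^2K(\tau,z)\,\mathrm{d}z=0$ is not needed (and not usable, since $f$ is merely bounded) in part (i) — there the direct log estimate you computed already closes the argument; the cancellation, i.e.\ subtracting $\nabla^k f(s,x)$ and using the $C^{0,\alpha}$ modulus to get the integrable weight $\tau^{-1+\alpha/2}$, is what rescues the borderline second-derivative kernel in part (ii), exactly as you indicate. With that reading, your induction via commuting $\partial_{x_i}$ with $(\partial_t-\Delta)^{-1}$ (legitimate because of the zero data at $t=-T$) together with part (i) yields the $C^{k+1}_x$ bound, and deferring the elementary kernel computations to \cite{MR3616490} is consistent with what the paper itself does.
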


Lemmas \ref{carl-first}-\ref{carl-second} are the quantitative Carleman inequalities defined respectively on the circle and the sphere which play a key role in derivation of upper bound of $N_0$. Here, we don't plan to provide their proof due to limited the length of paper. we refer the interesting reader to \cite{MR4337421} (see Section 4).

\begin{lem}\textsuperscript{{\rm \cite{MR4337421}}}(First Carleman inequality)\label{carl-first}  Let $C_{carl}\in[1,\infty)$,
$T\in[1,\infty)$, and $0 < r_- < r_+$, where $C_{carl}$ is called Carleman's constant. Let ${\mathcal A}$ denote the space-time annulus
$$ {\mathcal A} \triangleq\{ (t,x) \in \R \times \R^3: t \in [0,T]; r_- \leq |x| \leq r_+ \}.$$
Let $U: {\mathcal A} \to \R^3$ be a smooth function and such that $U$ satisfies the differential inequality
\begin{equation}\label{lu}
 |\partial_tU+\Delta U| \leq  C_{carl}^{-1}T^{-1} |U| + C_{carl}^{-\frac12}T^{-\frac12} |\nabla U|\quad \text{on}\quad {\mathcal A}.
\end{equation}
Assume the inequality
\begin{align*}
r_-^2 &\geq 4 C_{carl} T. \label{sigma-2}
\end{align*}
Then one has the following bound
\begin{align*}
&\int_{0}^{\frac{T}4} \int_{10r_- \leq |x| \leq \frac{r_+}{2}}  T^{-1} |U|^2 + |\nabla U|^2\ dx dt \nonumber\\
&\quad\lesssim C_{carl}^2 e^{-\frac{r_- r_+}{4C_{carl} T}} \Big[\int\int_{\mathcal A} e^{\frac{2 |x|^2}{C_{carl} T}} \Big(T^{-1} |U(t,x)|^2 + |\nabla U(t,x)|^2\Big)\ dx dt \nonumber\\
&\quad\quad+ e^{\frac{2 r_+^2}{C_{carl} T}} \int_{r_- \leq |x| \leq r_+} |U(0,x)|^2\ dx\Big].
\end{align*}
\end{lem}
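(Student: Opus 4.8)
Inequality \eqref{lu} is a parabolic Carleman estimate for the \emph{backward} heat operator $\partial_t+\Delta$ on the space--time annulus $\mathcal A$, and the route I would take is the classical conjugation-and-commutator method, following \cite{MR4337421}. The first observation is that the differential inequality \eqref{lu} reduces the whole problem to a \emph{linear} weighted estimate (and, after the harmless rescaling $x\mapsto x/\sqrt T$, $t\mapsto t/T$, one may even assume $T=1$): it suffices to establish a bound of the schematic form
\begin{align*}
&\int_0^{T/4}\int_{10r_-\le|x|\le r_+/2} e^{2\phi}\bigl(T^{-1}|U|^2+|\nabla U|^2\bigr)\,dx\,dt\\
&\qquad\lesssim\ C_{carl}^2\int\!\!\int_{\mathcal A} e^{2\phi}\,|\partial_tU+\Delta U|^2\,dx\,dt\ +\ (\text{a boundary term at }t=0)
\end{align*}
for a suitable weight $\phi=\phi(t,x)$, because one then inserts \eqref{lu}, squares, and absorbs the resulting $C_{carl}^{-1/2}T^{-1/2}|\nabla U|$ and $C_{carl}^{-1}T^{-1}|U|$ contributions back into the left-hand side. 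This absorption is exactly why \eqref{lu} is needed with those $C_{carl}$-gains and why $C_{carl},T\ge1$; after it one reads off the claim once $e^{2\phi}$ has been compared, on the relevant regions, with the explicit factors $e^{-r_-r_+/(4C_{carl}T)}$, $e^{2|x|^2/(C_{carl}T)}$ and $e^{2r_+^2/(C_{carl}T)}$ in the statement.

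For the linear estimate I would conjugate by the weight: set $v\triangleq e^{\phi}U$, so that
\[
e^{\phi}(\partial_t+\Delta)(e^{-\phi}v)=\partial_tv+\Delta v-2\nabla\phi\cdot\nabla v+\bigl(|\nabla\phi|^2-\Delta\phi-\partial_t\phi\bigr)v=:\mathcal Sv+\mathcal Av,
\]
where $\mathcal S$ collects the $L^2_x$-self-adjoint part (the Laplacian, the symmetrized first-order drift, and the potential) and $\mathcal A$ the skew-adjoint part (carrying $\partial_t$ and the antisymmetrized drift). Expanding the square in $L^2_{t,x}(\mathcal A)$ gives
\[
\|\mathcal Sv+\mathcal Av\|_{L^2_{t,x}}^2=\|\mathcal Sv\|^2+\|\mathcal Av\|^2+\int\!\!\int_{\mathcal A}\langle[\mathcal S,\mathcal A]v,v\rangle\,dx\,dt+(\text{boundary terms}),
\]
and, discarding the two nonnegative square terms, everything reduces to (i) a lower bound for the commutator form and (ii) control of the boundary terms, which live on the time faces $t=0,T$ and the spheres $|x|=r_-,r_+$.

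The weight I would use is essentially Gaussian in space with a reciprocal-linear time profile --- roughly $\phi(t,x)\sim\dfrac{|x|^2}{C_{carl}(2T-t)}$, up to a lower-order $t$-dependent correction inserted to tidy the zeroth-order and Jacobian terms --- so that $\phi$ stays comparable to $|x|^2/(C_{carl}T)$ throughout the slab, matching the exponential factors in the statement. A direct computation then shows that on $\mathcal A$ the commutator form is bounded below by a positive combination of $(\nabla^2\phi)(\nabla v,\nabla v)$ and $|\nabla\phi|^2|v|^2$ (plus controllable remainders), whose leading contributions are $\gtrsim C_{carl}^{-1}T^{-1}|\nabla v|^2$ and $\gtrsim|\nabla\phi|^2|v|^2$. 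This is exactly where the hypothesis $r_-^2\ge4C_{carl}T$ enters: on the annulus $|\nabla\phi|\gtrsim r_-/(C_{carl}T)$, hence $|\nabla\phi|^2\gtrsim r_-^2/(C_{carl}^2T^2)\ge 4/(C_{carl}T)$, so the potential term dominates $C_{carl}^{-1}T^{-1}|v|^2$ with a numerical factor to spare. For the boundary terms, the profile $2T-t$ is chosen so that the $t=T$ face has the right sign to be dropped, the spheres $|x|=r_\pm$ contribute either favorably-signed terms or terms absorbable into the full weighted integral on the right, and only the $t=0$ face survives --- producing the second right-hand term. Finally, tracking the weights through this bookkeeping yields the prefactor $e^{-r_-r_+/(4C_{carl}T)}$ (from the drop of $\phi$ between the outer scale $r_+$ and the measured region, quantified against the inner scale $r_-$) and the overall power $C_{carl}^2$ (from the two absorptions of $C_{carl}^{-1}$-type smallness).

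The main obstacle is precisely this pseudoconvexity step: one must pin down $\phi$ (including the lower-order corrections) so that, simultaneously and with explicit constants, (i) the commutator quadratic form is pointwise nonnegative on all of $\mathcal A$ for every $C_{carl}\ge1$ and $T\ge1$; (ii) the boundary contributions on $|x|=r_\pm$ and on $t=T$ are non-positive after integration by parts, so that they may simply be discarded rather than estimated; and (iii) the final comparison of $e^{2\phi}$ with the three printed exponentials keeps the constant at $C_{carl}^2$. With the weight fixed these are explicit elementary inequalities, but arranging all of them at once is the delicate part; since this is verbatim the first Carleman inequality of \cite{MR4337421}, I would import that reference's weight and commutator identities directly. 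The vector-valued nature of $U$ causes no difficulty: $\Delta$, $\nabla$ and the scalar weight $e^{\phi}$ act componentwise, so the scalar argument applies with $|U|$ and $|\nabla U|$ read as Euclidean norms.
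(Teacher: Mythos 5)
First, a point of reference: the paper does not prove this lemma at all --- it is quoted (with the extra parameter $C_{carl}$ inserted) from Tao's work, and the text explicitly refers the reader to Section 4 of \cite{MR4337421} for the proof. So the only meaningful comparison is with Tao's argument, and at the level of strategy your outline is indeed that argument's family: conjugate by a weight $e^{\phi}$, split the conjugated operator into symmetric and skew parts (equivalently, run Tao's monotonicity-formula computation), get positivity from the commutator, absorb the right-hand side of \eqref{lu} using the $C_{carl}^{-1}$ gains, and let the cutoff errors and the $t=0$ face produce the right-hand side. Your observation that the hypothesis should enter through $|\nabla\phi|^2\gtrsim r_-^2/(C_{carl}^2T^2)\geq 4/(C_{carl}T)$ is also the right kind of use of $r_-^2\geq 4C_{carl}T$.

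However, there is a genuine gap, and it sits exactly where you defer to the reference: the choice of weight and the exponent bookkeeping, which is the entire quantitative content of the lemma. The specific weight you propose, $\phi\sim |x|^2/\big(C_{carl}(2T-t)\big)$, cannot produce the prefactor $e^{-r_-r_+/(4C_{carl}T)}$. Since \eqref{lu} is only available on the annulus, any integration by parts must be localized there, so there are unavoidable error terms supported in an inner collar $r_-\leq|x|\leq O(r_-)$ at \emph{all} times, in particular at times $t\leq T/4$ where the left-hand side is measured; the gain one can extract against them is governed by how much the weight climbs between $|x|\approx r_-$ and the inner edge $|x|=10r_-$ of the measured region. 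For a quadratic weight of your size this climb is only $O\big(r_-^2/(C_{carl}T)\big)$, while the statement demands a gain of order $r_-r_+/(C_{carl}T)$, which is far larger in the regime $r_+\gg r_-$ in which the lemma is actually applied in Section \ref{pf-N0} (there $r_+/r_-\sim A_6$); no tuning of the time profile can help, because these collar errors coexist in time with the measured region. The correct weight must contain a piece that is essentially linear in $|x|$ with slope of order $r_+/(C_{carl}T)$, so that the climb across $[r_-,10r_-]$ is $\sim r_-r_+/(C_{carl}T)$ --- this is also why the measured region starts at $10r_-$, why the data term carries $e^{2r_+^2/(C_{carl}T)}$ while the bulk carries $e^{2|x|^2/(C_{carl}T)}$, and where $r_-^2\geq4C_{carl}T$ is then used for the absorption. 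Relatedly, the spherical boundary contributions cannot simply be "dropped by sign": in the cutoff scheme they become collar volume terms, and the weighted bulk integral $\int\!\!\int_{\mathcal A}e^{2|x|^2/(C_{carl}T)}(T^{-1}|U|^2+|\nabla U|^2)$ on the right-hand side is precisely the majorant that absorbs them (otherwise no such term would appear in the statement). As written, then, your proposal does not establish the estimate; it reduces it back to the computation in \cite{MR4337421}, and the one concrete structural guess it makes about that computation is the wrong one.
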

\begin{lem}\textsuperscript{{\rm \cite{MR4337421}}}(Second Carleman inequality)\label{carl-second}  Let $C_{carl}\in[1,\infty)$,
$T\in[1,\infty)$, $r>0$, and we define the cylindrical region
$$ {\mathcal C} \triangleq\{ (t,x) \in \R \times \R^3: t \in [0,T]; |x| \leq r \}.$$
Let $U: {\mathcal A} \to \R^3$ be a smooth function and such that $U$ satisfies the differential inequality
\begin{equation} \label{weifen-3}
 |\partial_tU+\Delta U| \leq  C_{carl}^{-1}T^{-1} |U| + C_{carl}^{-\frac12}T^{-\frac12} |\nabla U|\quad \text{on}\quad {\mathcal C}.
\end{equation}
Assume
\begin{align*}
r^2 &\geq 4000 T. \label{sigma-5}
\end{align*}
Then, for any $0 < t_1 \leq t_0 < \frac{T}{10000}$ one has the bound
 \begin{align*}
&\int_{t_0}^{2t_0} \int_{|x| \leq \frac{r}2} (T^{-1} |U|^2 + |\nabla U|^2) e^{-\frac{|x|^2}{4t}} \ dx dt\nonumber\\
&\quad\lesssim e^{-\frac{r^2}{500 t_0}}\int_0^T \int_{|x| \leq r} T^{-1} |U(t,x)|^2 +|\nabla U(t,x)|^2\ dx dt\nonumber\\
&\quad\quad+ \Big(\frac{t_0}{t_1}\Big)^{\frac32} (\frac{et_0}{t_1})^{\frac{Cr^2}{t_0}} \int_{|x| \leq r} |U(0,x)|^2 e^{-\frac{|x|^2}{4t_1}}\ dx.
\end{align*}
\end{lem}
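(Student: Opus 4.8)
Lemma~\ref{carl-second} is a quantitative unique-continuation (Carleman) estimate for the \emph{backward} heat operator $\partial_t+\Delta$ perturbed by the lower-order terms in \eqref{weifen-3}; the plan is to prove it by the classical conjugated-operator/commutator method (H\"ormander--Treves), in the form designed for exactly this application by Escauriaza--Seregin--\v{S}ver\'ak \cite{MR2005639} and made quantitative by Tao \cite{MR4337421}. First I would normalise by rescaling: replacing $U(t,x)$ by $U(Tt,\sqrt T x)$ reduces to $T=1$, turns \eqref{weifen-3} into $|\partial_tU+\Delta U|\le C_{carl}^{-1}|U|+C_{carl}^{-1/2}|\nabla U|$ on $\{t\in[0,1],\,|x|\le r\}$ with $r\ge\sqrt{4000}$ and $t_1\le t_0<1/10000$, and at the end one scales back. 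After this reduction the coefficients of the zeroth- and first-order perturbations are $\le 1$, while the effective frequency of the Carleman weight in the region of interest is $\gtrsim t_0^{-1}\gg1$; this gap is what will allow those terms to be absorbed. I would also work componentwise in $U$, which is harmless since the coupling enters only through the scalars $|U|$, $|\nabla U|$.

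The core is a weighted a priori inequality. One fixes a large parameter $a$ (ultimately $a\sim r^2/t_0$, and this parameter is what manufactures the factor $(et_0/t_1)^{Cr^2/t_0}$) together with a time profile $h=h(t)$ --- smooth, increasing, equal to $t$ for small $t$, regularised near $t=0$ at scale comparable to $t_1$, and obeying a pseudoconvexity inequality --- and sets the Carleman weight $\Phi(t,x)=-\tfrac{|x|^2}{8h(t)}-a\log h(t)$. For $v$ smooth and compactly supported in $x$ one writes $v=e^{-\Phi}w$ and checks that $e^{\Phi}(\partial_t+\Delta)(e^{-\Phi}w)=\mathsf{S}w+\mathsf{A}w$, where $\mathsf{S}=\Delta+(|\nabla\Phi|^2-\partial_t\Phi)$ is formally self-adjoint and $\mathsf{A}=\partial_t-2\nabla\Phi\cdot\nabla-\Delta\Phi$ is formally skew-adjoint on $L^2([0,1]\times\R^3)$. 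Expanding the square,
\[
\iint e^{2\Phi}|(\partial_t+\Delta)v|^2\,dx\,dt=\|\mathsf{S}w\|^2+\|\mathsf{A}w\|^2+\langle[\mathsf{S},\mathsf{A}]w,w\rangle+(\text{boundary terms at }t=0,1).
\]
The pseudoconvexity computation --- this is where $h$ must be chosen carefully and $a$ taken large --- gives $\langle[\mathsf{S},\mathsf{A}]w,w\rangle\gtrsim\iint e^{2\Phi}\big(\tfrac{a}{h}|w|^2+|\nabla w|^2\big)$ up to terms that are absorbed; the $t=0$ contribution is $\int_{\R^3}e^{2\Phi(0,x)}(\cdots)\,dx$ and the $t=1$ contribution must be shown to have a favourable sign, which is again a constraint on $h$. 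Undoing $v=e^{-\Phi}w$ yields a Carleman inequality of the schematic form
\[
\iint e^{2\Phi}\Big(\tfrac{a}{h}|v|^2+|\nabla v|^2\Big)\,dx\,dt\;\lesssim\;\iint e^{2\Phi}\,h^2\,|(\partial_t+\Delta)v|^2\,dx\,dt+\int_{\R^3}e^{2\Phi(0,x)}|v(0,x)|^2\,dx .
\]

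With this in hand, I would apply it to $v=\chi(x)\,U(t,x)$, with $\chi$ a cutoff equal to $1$ on $B_{r/2}$ and supported in $B_r$. On the left, restricting to $t\in[t_0,2t_0]$, $|x|\le r/2$ (where $h(t)=t$), the weight satisfies $e^{2\Phi}\gtrsim e^{-|x|^2/4t}$, so the left side dominates $\int_{t_0}^{2t_0}\!\int_{|x|\le r/2}(|U|^2+|\nabla U|^2)e^{-|x|^2/4t}$, i.e.\ (after scaling back) the left-hand side of the statement. On the right, $(\partial_t+\Delta)v=\chi(\partial_t+\Delta)U+2\nabla\chi\cdot\nabla U+(\Delta\chi)U$: the first term is $\le\chi(C_{carl}^{-1}|U|+C_{carl}^{-1/2}|\nabla U|)$ by \eqref{weifen-3}, and since on its support $h\le t_0<1/10000$ and $a$ is large, $h^2e^{2\Phi}\chi^2(C_{carl}^{-1}|U|+C_{carl}^{-1/2}|\nabla U|)^2$ is absorbed into the left side; the cutoff errors $2\nabla\chi\cdot\nabla U+(\Delta\chi)U$ are supported in $\{r/2\le|x|\le r\}$, where the choice of $h$ makes $h^2e^{2\Phi}\lesssim e^{-r^2/500t_0}$, producing the bulk term $e^{-r^2/500t_0}\int_0^T\!\int_{|x|\le r}(T^{-1}|U|^2+|\nabla U|^2)$; and the $t=0$ term is $\int_{|x|\le r}e^{2\Phi(0,x)}|U(0,x)|^2\,dx$, where (since $h(0)\sim t_1$) the ratio of $e^{2\Phi(0,x)}$ to the target weight $e^{-|x|^2/4t_1}$ is, after scaling back and optimising in $a\sim r^2/t_0$, of size $(t_0/t_1)^{3/2}(et_0/t_1)^{Cr^2/t_0}$. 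Collecting these pieces gives the claimed inequality.

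The main obstacle is the construction and bookkeeping of the weight $\Phi$ --- concretely, choosing the profile $h(t)$ (including its regularisation near $t=0$ and its behaviour near $t=1$) and the parameter $a$ so that simultaneously (i) the commutator $[\mathsf{S},\mathsf{A}]$ is positive and strong enough to control $|\nabla w|^2$ and not merely $\tfrac ah|w|^2$ --- gradient control is precisely what lets the \emph{first-order} perturbation $C_{carl}^{-1/2}T^{-1/2}|\nabla U|$ be absorbed, and is what forces a genuine Carleman argument rather than a softer log-convexity (Agmon--Nirenberg) one; (ii) the $t=1$ boundary term has a good sign; and (iii) the three weight comparisons --- on $[t_0,2t_0]\times B_{r/2}$, on the cutoff shell $\{r/2\le|x|\le r\}$, and on the slice $t=0$ --- produce exactly the exponential factors $e^{-r^2/500t_0}$ and $(et_0/t_1)^{Cr^2/t_0}$ of the statement (the roles of the hypotheses $r^2\ge4000T$ and $t_0<T/10000$ are to make (iii) quantitatively consistent). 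This is the technical content of Section~4 of \cite{MR4337421}, itself a quantification of \cite{MR2005639}; the present paper invokes it rather than reproving it, and executing it amounts to the H\"ormander-type commutator estimate with every constant tracked.
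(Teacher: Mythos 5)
The paper does not prove this lemma at all: it imports it verbatim from Tao's work and explicitly defers to Section 4 of \cite{MR4337421}, so there is no in-paper argument to compare against. Your proposal reconstructs exactly the strategy of that cited proof — conjugating the backward heat operator with a weight of the form $-|x|^2/(8h(t))-a\log h(t)$ (with $h$ a regularisation of $t$ at scale $t_1$ and $a\sim r^2/t_0$), splitting into symmetric and skew parts, exploiting commutator positivity to control both $a h^{-1}|w|^2$ and $|\nabla w|^2$ so that the zeroth- and first-order perturbations in \eqref{weifen-3} can be absorbed, and then cutting off in space and comparing weights to produce the factors $e^{-r^2/(500t_0)}$ and $(et_0/t_1)^{Cr^2/t_0}$ — so it is the same route, correctly identified. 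Note, however, that as written it is a roadmap rather than a proof: the pseudoconvexity computation for your specific $h$, the sign of the $t=1$ boundary term, the absorption constants, and the three weight comparisons (in particular uniformity of the shell estimate over all $t\in[0,T]$, where the competition between $h^{-2a}$ and $e^{-r^2/(16h)}$ is delicate) are asserted rather than verified, and these are precisely where the quantitative content of Tao's Section 4 lies.
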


\subsection{Quantitative version of $\epsilon$-regularity for MHD sysstem}

This subsection mainly devote to the quantitative estimates for for the higher-order derivatives of $(v,H)$ (see the Corollary \ref{cor1.fulu} for more details), which are key ingredients in the proof of Proposition \ref{vi} regarding the annuli of estimation for $(v, H)$. First, we present the following Lemma \ref{theo.smaller} which can be obtained directly by using the Caffarelli-Kohn-Nirenberg type iteration, and we will not elaborate on its proof here, and refer the interested reader to page 291 of \cite{MR3616490}.

\begin{lem}\textsuperscript{{\rm \cite{MR3616490}}}\label{theo.smaller}
There exist absolute constants $\varepsilon_*>0$ and $C>0$ such that if $(v, H, \Pi)$ is a suitable weak solution of the MHD equations \eqref{mhdeq} on $Q_1(0,0)$ and for some $\varepsilon<\varepsilon_*$
\begin{equation*}
\int\limits_{Q_1(0,0)}|v|^3+|H|^3+|\Pi|^\frac32\ \mathrm{d}x\mathrm{d}s\leq\varepsilon,
\end{equation*}
then
\begin{equation}\label{e.morreybound}
\|v\|_{L^\infty_tL^\infty_x(Q_{\frac12}(0,0))}
+\|H\|_{L^\infty_tL^\infty_x(Q_{\frac12}(0,0))}
\leq C\varepsilon^\frac13.
\end{equation}
Moreover, the following qualitative estimate holds  for any positive integer $k$,
\begin{align}\label{quali-L-1}
\max_{z\in Q_{\frac18}(0,0)}|\nabla^k (v,H)(z)|\leq C_k.
\end{align}
\end{lem}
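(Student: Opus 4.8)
The plan is to prove this $\varepsilon$-regularity statement by transcribing the classical Caffarelli--Kohn--Nirenberg partial-regularity iteration for the Navier--Stokes equations (cf. \cite{MR673830}) to the MHD system \eqref{mhdeq}, treating $w\triangleq(v,H)$ as a single divergence-free unknown and absorbing the magnetic pressure $\tfrac12|H|^2$ into $\Pi$. The first ingredient is the \emph{generalized local energy inequality} for suitable weak solutions: testing $\eqref{mhdeq}_1$ against $v\phi$ and $\eqref{mhdeq}_2$ against $H\phi$ for a nonnegative cutoff $\phi\in C_c^\infty(Q_1(0,0))$, adding the two identities and integrating by parts with $\div v=\div H=0$, the two copies of $\int\!\!\int H_iH_j\,\partial_j v_i\,\phi$ produced by $(H\cdot\nabla)H$ and by $-(H\cdot\nabla)v$ cancel, leaving
\begin{align*}
\esssup_{t}\int |w|^2\phi\,dx+2\!\int\!\!\int|\nabla w|^2\phi\,dx\,dt &\le \int\!\!\int |w|^2(\partial_t+\Delta)\phi\,dx\,dt+\int\!\!\int\big(|w|^2+2\Pi\big)(v\cdot\nabla\phi)\,dx\,dt \\
&\quad -2\!\int\!\!\int (v\cdot H)\,(H\cdot\nabla\phi)\,dx\,dt .
\end{align*}
The only genuinely new term, $\int\!\!\int(v\cdot H)(H\cdot\nabla\phi)$, is pointwise bounded by $|w|^3|\nabla\phi|$ and is therefore controlled just like the Navier--Stokes convection term.

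Taking the divergence of $\eqref{mhdeq}_1$ gives the pressure equation $-\Delta\Pi=\partial_i\partial_j(v_iv_j-H_iH_j)$, i.e. the Navier--Stokes pressure equation with $v\otimes v$ replaced by $v\otimes v-H\otimes H$; splitting $\Pi$ on a subcylinder into a Calder\'{o}n--Zygmund part and a harmonic part and using interior estimates for the latter yields the standard decay inequality for the scale-invariant pressure quantity $D(r)\triangleq r^{-2}\int_{Q_r}|\Pi|^{3/2}$. Combining this with the local energy inequality, Sobolev embedding, and interpolation produces, for the usual scale-invariant quantities $A(r),E(r),C(r)$ built from $\sup_t\int_{B_r}|w|^2$, $\int_{Q_r}|\nabla w|^2$, $\int_{Q_r}|w|^3$, a closed family of iteration inequalities of the schematic form $C(\theta r)+D(\theta r)\le C_0\theta^{\gamma}(C(r)+D(r))+C_0\theta^{-\beta}(C(r)+D(r))^{3/2}$ for all $\theta\in(0,\tfrac14]$ and a fixed $\gamma>0$. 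Choosing $\theta$ with $C_0\theta^{\gamma}<\tfrac12$ and then $\varepsilon_*$ small enough that $C(1)+D(1)\le\varepsilon<\varepsilon_*$ forces $C(\theta^k)+D(\theta^k)$ to remain below $\varepsilon_*$ and to decay geometrically; hence $w$ obeys a parabolic Morrey bound $\sup_{0<\rho\le1/2}\rho^{-2}\int_{Q_\rho(z,s)}|w|^3\lesssim\varepsilon$ uniformly on $Q_{1/2}(0,0)$, and the standard parabolic Morrey/heat-potential bootstrap upgrades this to $\|v\|_{L^\infty(Q_{1/2}(0,0))}+\|H\|_{L^\infty(Q_{1/2}(0,0))}\lesssim(\sup_\rho\rho^{-2}\int_{Q_\rho}|w|^3)^{1/3}\lesssim\varepsilon^{1/3}$, which is \eqref{e.morreybound}.

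For the higher-order bound \eqref{quali-L-1}, once $v,H\in L^\infty(Q_{1/2}(0,0))$ the pressure equation gives $\Pi\in L^\infty_tL^p_x$ locally for every $p<\infty$ by Calder\'{o}n--Zygmund, so $\eqref{mhdeq}_1$ and $\eqref{mhdeq}_2$ are linear heat equations with right-hand sides in $L^\infty_tL^p_x$; interior parabolic $L^p$ estimates give $\nabla v,\nabla H\in L^\infty_tL^p_x$ on a smaller cylinder, Sobolev embedding promotes this to parabolic H\"{o}lder continuity, and then a routine bootstrap --- differentiating the equations, alternately invoking Schauder and Calder\'{o}n--Zygmund estimates, and using that $-\Delta\Pi=\partial_j v_i\,\partial_i v_j-\partial_j H_i\,\partial_i H_j$ is quadratic in first derivatives --- yields $\nabla^k v,\nabla^k H\in L^\infty(Q_{1/8}(0,0))$ with constants $C_k$ depending only on $k$.

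The argument is classical in spirit, so I do not expect a conceptual obstacle; the genuine work is (i) fixing the correct notion of suitable weak solution for \eqref{mhdeq} and justifying the local energy inequality by mollification and passage to the limit, and (ii) carefully bookkeeping the extra magnetic terms throughout the iteration. Since these either cancel (the $\int\!\!\int H_iH_j\partial_j v_i\phi$ pair) or are dominated by $|w|^3$, the principal difficulty is the length of the CKN iteration rather than any new phenomenon.
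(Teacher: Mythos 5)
Your proposal is the standard Caffarelli--Kohn--Nirenberg iteration adapted to the MHD system, which is exactly what the paper relies on: it gives no proof of Lemma \ref{theo.smaller} and simply cites the CKN-type iteration in \cite{MR3616490}, so your route coincides with the intended one (your local energy inequality, including the cancellation of the $\int H_iH_j\partial_jv_i\phi$ pair and the residual $-2\iint(v\cdot H)(H\cdot\nabla\phi)$ term, is the correct MHD version). The only sketch-level looseness is the final passage from the uniform bound $\rho^{-2}\int_{Q_\rho}|w|^3\lesssim\varepsilon$ to $\|w\|_{L^\infty}\lesssim\varepsilon^{1/3}$, which is most cleanly done via the oscillation-decay (Campanato) form of the iteration rather than the raw Morrey bound, but this is a standard refinement and not a conceptual gap.
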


In fact, the quantitative estimate \eqref{e.morreybound} is valid for $(\nabla^kv, \nabla^kH)$ for any $k>0$ by taking advantage of  the following lemma, i.e., the above $C_k$ in \eqref{quali-L-1} can be quantified.

\begin{lem}\label{theo.higher}
Let $(v, H, \Pi)$ be a distributional solution to the MHD equation \eqref{mhdeq} in $Q_{\frac12}(0,0)$. Furthermore, suppose $v,~H\in L^\infty(Q_{\frac12}(0,0))$, $\omega, J\in L^2(Q_{\frac12}(0,0))$ with
\begin{align}\label{small-v-H}
\|v\|_{L^\infty_tL^\infty_x(Q_{\frac12}(0,0))}+\|H\|_{L^\infty_tL^\infty_x(Q_{\frac12}(0,0))}
+\|\omega\|_{L^2_tL^2_x(Q_{\frac12}(0,0))}+\|J\|_{L^2_tL^2_x(Q_{\frac12}(0,0))}
<1.
\end{align}
Then
\EQS{\label{quanti-L-1}
&\|\nabla^k(v, H)\|_{L^\infty_tL^\infty_x(Q_{\frac{1}{8}}(0,0))}\\
&\quad\leq C'_k\Big(\|v\|_{L^\infty_tL^\infty_x(Q_{\frac12}(0,0))}+\|H\|_{L^\infty_tL^\infty_x
(Q_{\frac12}(0,0))}\\
&\quad\quad+\|\omega\|_{L^2_tL^2_x(Q_{\frac12}(0,0))}
+\|J\|_{L^2_tL^2_x(Q_{\frac12}(0,0))}\Big).
}
for any integer $k\geq0$. Here $C_k\in(0,\infty)$ is a universal constant.
\end{lem}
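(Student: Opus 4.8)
The plan is to make quantitative the standard parabolic bootstrap that already underlies the qualitative bound \eqref{quali-L-1}, carried out on a decreasing chain of parabolic cylinders $Q_{\frac12}(0,0)=Q^{(0)}\supset Q^{(1)}\supset\cdots\supset Q^{(M)}=Q_{\frac18}(0,0)$ with $M=M(k)$ depending only on $k$, and tracking every constant. At each passage one multiplies the relevant unknown by a fixed space-time cutoff $\phi_j$ equal to $1$ on $Q^{(j+1)}$ and supported in $Q^{(j)}$; the cut-off quantity then vanishes near the bottom of $Q^{(j)}$ and solves a forced heat equation on a slab $(-r_j^2,0]\times\R^3$ to which Lemmas \ref{lem.heat1}--\ref{lem.heat2-1} apply with no initial-data term, the commutators produced by $\phi_j$ being rewritten as $\nabla\!\cdot\!(\nabla\phi_j\,u)-(\Delta\phi_j)u$ and $(\partial_t\phi_j)u$ so that they never carry worse regularity than the forcing. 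Crucially, since the hypotheses of Lemma \ref{theo.higher} do not control the pressure $\Pi$, the bootstrap must avoid the momentum equation entirely and be run only on the three pressure-free evolutions: the curl system \eqref{wJeq} for $(\omega,J)$, the magnetic equation $\partial_tH-\Delta H=\nabla\!\cdot\!(H\otimes v-v\otimes H)$, and \eqref{Hxkeq-1} for $\nabla H$; all transport and stretching terms are put in divergence form using $\div v=\div H=\div\omega=\div J=0$, and $\nabla^{m}(v,H)$ is recovered from $\nabla^{m-1}(\omega,J)$ by interior div-curl estimates of the form $\|\nabla w\|_{C^{\ell,\alpha}(B_{\rho'})}\lesssim\|\nabla\times w\|_{C^{\ell,\alpha}(B_\rho)}+\|w\|_{L^\infty(B_\rho)}$ for divergence-free $w$ (Schauder theory for the order-zero operator $\nabla(-\Delta)^{-1}\nabla\times$).

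The first step is an integrability upgrade for $(\omega,J)$, and here the obstruction emphasized in the introduction appears in miniature: the source $R(v,H)$ in $\eqref{wJeq}_2$ is, at the starting regularity, only a product of two $L^2$ gradients, hence merely $L^1$, too weak for any maximal-regularity estimate. This is resolved exactly as the analogous difficulty for the vorticity equation, by the identity $R(v,H)=-\nabla\times Z$ with $Z_j\triangleq v\cdot\partial_j H=v_i\,\partial_j H_i$, so that $2R(v,H)$ is the divergence of an antisymmetric matrix built from $Z$, and $Z\in L^\infty_tL^2_x$ locally because $\nabla H\in L^\infty_tL^2_x$ locally by the div-curl estimate applied to $J$. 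Thus, after the divergence-form rewriting, the whole right-hand side of \eqref{wJeq} is $\nabla\!\cdot\!(\text{bounded}\times L^2)$; localizing and applying Lemma \ref{lem.heat2} upgrades $(\omega,J)\in L^\infty_tL^2_x$ to $L^\infty_tL^q_x$ for some $q>2$ on a smaller cylinder, and re-running the div-curl estimate and Lemma \ref{lem.heat2} a bounded number of times reaches $(\omega,J)\in L^\infty_tL^p_x$ for every finite $p$, whence Lemma \ref{lem.heat2-1}(i) gives $(\omega,J)\in L^\infty_tC^{0,\alpha}_x$ and the div-curl/Schauder estimate then gives $(v,H)\in L^\infty_tC^{1,\alpha}_x$ on a still smaller cylinder. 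This already yields the $k=1$ case of \eqref{quanti-L-1}; moreover, since every nonlinear term of \eqref{mhdeq} is quadratic while \eqref{small-v-H} keeps each norm below $1$, every product is bounded by the product of two norms and hence by a single one, so the resulting estimate is linear in $\|v\|_{L^\infty}+\|H\|_{L^\infty}+\|\omega\|_{L^2}+\|J\|_{L^2}$ with an absolute constant.

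The higher-order step is a finite induction. Assuming at level $m$ that $(\omega,J)\in C^{m,\alpha}_x$ and $(v,H)\in C^{m+1,\alpha}_x$ on $Q^{(c_m)}$ with norms $\le C_m\mathcal N$, where $\mathcal N$ denotes the right-hand side of \eqref{quanti-L-1}, the forcings of the four pressure-free equations --- rewritten via the curl trick for $R(v,H)$ and via the identities $(v\cdot\nabla)H_{x_k}=\nabla\!\cdot\!(v\otimes H_{x_k})$, $(H\cdot\nabla)v_{x_k}=\nabla\!\cdot\!(H\otimes v_{x_k})$, etc. for \eqref{Hxkeq-1} --- are of the form $\nabla\!\cdot\!F$ with each $F$ a sum of products of quantities already lying in $C^{m,\alpha}_x$, hence $F\in C^{m,\alpha}_x$. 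Lemma \ref{lem.heat2-1}(ii) then promotes $(\omega,J)$, $H$ and $\nabla H$ to $C^{m+1}_x$, the div-curl estimate promotes $(v,H)$ to $C^{m+2}_x$, and a second application of Lemma \ref{lem.heat2-1}(i) to the order-$(m+1)$ derivatives --- whose forcings are by then bounded, after the same commutator rewriting --- restores the Hölder exponent, giving level $m+1$ on $Q^{(c_{m+1})}$. After $k$ passages one reaches $\nabla^k(v,H)\in L^\infty_tL^\infty_x(Q_{\frac18})$; since each passage uses only finitely many of Lemmas \ref{lem.heat1}--\ref{lem.heat2-1}, Hölder's inequality, fixed cutoffs and the div-curl estimate, and since the quadratic-times-smallness mechanism keeps every estimate linear in $\mathcal N$, the accumulated constant $C'_k$ depends only on $k$ (and $n=3$), which is \eqref{quanti-L-1}.

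The main obstacle is the first step: one must write $R(v,H)$ as a curl so that $\eqref{wJeq}_2$ carries any maximal-regularity estimate at all (the higher-order counterparts of this rewriting, together with the divergence-form rewriting of the transport and stretching terms, being precisely what keeps the induction pressure-free), and one must recover $\nabla v,\nabla H$ and their higher derivatives from $\omega,J$ by purely interior elliptic estimates rather than from the momentum equation, since $\Pi$ is not at our disposal. Everything past this is the routine, bookkeeping-heavy iteration that already produces \eqref{quali-L-1}.
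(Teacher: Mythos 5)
Your proposal is correct and shares the overall architecture of the paper's Appendix~\ref{app.a}: a localized parabolic bootstrap on a shrinking chain of cylinders, with every forcing rewritten in divergence form so that Lemmas \ref{lem.heat1}--\ref{lem.heat2-1} apply, the momentum equation (hence $\Pi$) avoided entirely, $\nabla^m(v,H)$ recovered from lower-order curls by an interior div--curl estimate (the paper's Lemma \ref{lem.vwcontr}), and a finite induction on the derivative order. The genuine difference lies in how the obstruction $R(v,H)$ in $\eqref{wJeq}_2$ is neutralized. The paper simply discards the $J$-equation and runs the bootstrap on the pair $(\omega,H_{x_k})$, using \eqref{Hxkeq-1} --- which contains no $R(v,H)$ --- and recovering $J=\nabla\times H$ from $\nabla H$ wherever it appears in the $\omega$-equation. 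You instead retain $\eqref{wJeq}_2$ and observe that $R(v,H)=-\nabla\times Z$ with $Z_j=v\cdot\partial_jH$; this identity is correct (a direct computation gives $(\nabla\times Z)_1=\partial_2 v_i\,\partial_3H_i-\partial_3 v_i\,\partial_2H_i=-R_1$, and cyclically), so the source becomes the curl of a (bounded)$\times L^2$ field and is admissible for Lemma \ref{lem.heat2}. Both fixes buy the same thing --- a pressure-free, divergence-form parabolic system --- and the subsequent bookkeeping is identical; yours is arguably the more economical resolution of the $R(v,H)$ difficulty, while the paper's choice of working with $\nabla H$ is forced on it anyway elsewhere (the Carleman step in Section \ref{pf-N0}), so reusing it here is natural. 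One minor inaccuracy in your first step: the hypothesis \eqref{small-v-H} gives $\omega,J\in L^2_tL^2_x(Q_{\frac12}(0,0))$, not $L^\infty_tL^2_x$, so $Z\in L^2_tL^2_x$ locally (apply the div--curl lemma at a.e.\ fixed time and integrate), and the integrability ladder starts from $L^2_tL^2_x$ exactly as in the paper's Step 2 ($L^2\to L^3\to L^6\to L^\infty$); this only costs a fixed number of extra iterations and does not affect the conclusion or the linearity of the final constant in $\mathcal N$.
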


 \begin{remark}
 The quantitative version of Barker-Prange's result for Navier-Stokes system was presented in \cite{MR4278282} without proof.  Here, for  completeness, we give a full proof in Appendix \ref{Sec5}  by exploiting the localisation procedure and $L^q$-$L^p$ estimates of the heat operator which is different from the Serrin's procedure and of independent interest. However, compared with Barker-Prange's result, our quantitative estimates are improved by using
 the velocity field $v$ in place of vorticity field $\omega$ in the left-hand side of \eqref{quanti-L-1}. On the other hand, our quantitative result extend Barker-Prange's estimate from $j=1,2$ to any $j\geq1$.
 \end{remark}

\medskip

Combining  Lemmas \ref{theo.smaller} and \ref{theo.higher}, we immediately derive the following  corollary which is a core result of this subsection.

\begin{cor}\label{cor1.fulu}
There exist absolute constants $\varepsilon'_*\in(0,1)$  such that if $(v, H, \Pi)$ is a suitable weak solution of the MHD equations \eqref{mhdeq} on $Q_1(0,0)$ and for some $\varepsilon'<\varepsilon'_*$
\begin{equation*}
\int_{Q_1(0,0)}|v|^3+|H|^3+|\Pi|^\frac32\ \mathrm{d}x\mathrm{d}s\leq\varepsilon',
\end{equation*}
then for any positive integer $k>0$
\begin{equation*}
\|\nabla^kv\|_{L^\infty_tL^\infty_x(Q_{\frac{1}{8}}(0,0))}
+\|\nabla^kH\|_{L^\infty_tL^\infty_x(Q_{\frac{1}{8}}(0,0))}
\leq C'_k(\varepsilon')^\frac13
\end{equation*}
for some universal constant $C'_k>0$.
\end{cor}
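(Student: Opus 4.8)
The plan is to feed the conclusion of Lemma~\ref{theo.smaller} into the hypotheses of Lemma~\ref{theo.higher}; the only ingredient not already supplied by those two lemmas is a quantitative $L^2$ bound, on the intermediate cylinder $Q_{\frac12}(0,0)$, for the vorticities $\omega=\nabla\times v$ and $J=\nabla\times H$. So first I would shrink $\varepsilon'_*$ if necessary so that $\varepsilon'_*\leq\varepsilon_*$, where $\varepsilon_*$ is the absolute constant of Lemma~\ref{theo.smaller}; that lemma then applies on $Q_1(0,0)$ and gives the $L^\infty$ bound $\|v\|_{L^\infty_tL^\infty_x(Q_{\frac12}(0,0))}+\|H\|_{L^\infty_tL^\infty_x(Q_{\frac12}(0,0))}\leq C(\varepsilon')^{\frac13}$.

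Next, to control $\omega$ and $J$ I would invoke the local energy (Caccioppoli-type) inequality that is part of the definition of a suitable weak solution of \eqref{mhdeq}: choosing a space-time cutoff $\phi$ equal to $1$ on $Q_{\frac12}(0,0)$ and supported in $Q_{\frac34}(0,0)$ (so that the initial and boundary terms drop), one obtains
$$\int_{Q_{\frac12}}|\nabla v|^2+|\nabla H|^2\ \mathrm{d}x\mathrm{d}t\ \lesssim\ \int_{Q_1}(|v|^2+|H|^2)\big(|\partial_t\phi|+|\Delta\phi|\big)+\big(|v|^3+|H|^3+|\Pi|^{\frac32}+|v|\,|H|^2\big)|\nabla\phi|\ \mathrm{d}x\mathrm{d}t.$$
Bounding $\int_{Q_1}|v|^2\leq|Q_1|^{\frac13}\|v\|_{L^3(Q_1)}^2\lesssim(\varepsilon')^{\frac23}$ (likewise for $|H|^2$, and, by H\"{o}lder's inequality, for the cubic cross term $\lesssim\|v\|_{L^3(Q_1)}\|H\|_{L^3(Q_1)}^2$), together with the hypothesis $\int_{Q_1}|v|^3+|H|^3+|\Pi|^{\frac32}\leq\varepsilon'<1$, makes the right-hand side $\lesssim(\varepsilon')^{\frac23}$. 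Since $|\omega|\leq\sqrt2\,|\nabla v|$ and $|J|\leq\sqrt2\,|\nabla H|$ pointwise, this yields $\|\omega\|_{L^2_tL^2_x(Q_{\frac12})}+\|J\|_{L^2_tL^2_x(Q_{\frac12})}\lesssim(\varepsilon')^{\frac13}$.

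Finally, I would shrink $\varepsilon'_*$ once more so that the sum $\|v\|_{L^\infty_tL^\infty_x(Q_{\frac12})}+\|H\|_{L^\infty_tL^\infty_x(Q_{\frac12})}+\|\omega\|_{L^2_tL^2_x(Q_{\frac12})}+\|J\|_{L^2_tL^2_x(Q_{\frac12})}<1$; a suitable weak solution restricted to $Q_{\frac12}(0,0)$ is in particular a distributional solution there, so Lemma~\ref{theo.higher} applies and bounds $\|\nabla^k(v,H)\|_{L^\infty_tL^\infty_x(Q_{\frac18})}$ by $C'_k$ times that sum, which is $\leq C'_k\,C''(\varepsilon')^{\frac13}$. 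Relabelling the constant gives the claim for every integer $k\geq1$.

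The only genuine work is the middle step: one has to write out the MHD local energy inequality explicitly and check that the Lorentz-force coupling contributes only cross terms of the type $(H\cdot\nabla)H\cdot v\,\phi$ and $(H\cdot\nabla)v\cdot H\,\phi$, whose integrands are $O(|v|\,|H|\,|\nabla H|)$ and, after one integration by parts onto the cutoff, are dominated by $|v|\,|H|^2\,|\nabla\phi|$ — hence cubic in $(v,H)$ and $O(\varepsilon')$, harmless. Everything else is a direct concatenation of the two quoted lemmas, and the scaling exponent $\tfrac13$ is preserved because $\sqrt{(\varepsilon')^{2/3}}=(\varepsilon')^{1/3}$ dominates $\sqrt{\varepsilon'}=(\varepsilon')^{1/2}$ for $\varepsilon'<1$.
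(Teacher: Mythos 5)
Your proposal is correct and has the same overall architecture as the paper's proof (apply Lemma \ref{theo.smaller} to get the quantitative $L^\infty$ bound on $(v,H)$ over $Q_{\frac12}$, verify the smallness hypothesis \eqref{small-v-H}, then conclude via Lemma \ref{theo.higher}), but the middle step — producing the quantitative $L^2_tL^2_x(Q_{\frac12})$ bound on $(\omega,J)$ — is done by a genuinely different mechanism. The paper interpolates, $\|h\|_{W^{1,2}_x(B_{1/2})}\leq C\|h\|_{L^2_x(B_{1/2})}^{1/2}\|h\|_{W^{2,2}_x(B_{1/2})}^{1/2}$, and combines this with the $L^\infty$ smallness from Lemma \ref{theo.smaller}; this requires an a priori control of $\|(v,H)\|_{L^2_tW^{2,2}_x(Q_{1/2})}$, which the paper only has in the qualitative form \eqref{quali-L-1}, and as written the interpolation delivers a power $(\varepsilon')^{1/6}$ from the $L^\infty$ factor alone unless the $W^{2,2}$ factor is also quantified. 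You instead invoke the local energy (Caccioppoli) inequality built into the definition of a suitable weak solution, with a cutoff vanishing on the parabolic boundary of $Q_{\frac34}$, and estimate every term on the right directly from the hypothesis $\int_{Q_1}|v|^3+|H|^3+|\Pi|^{3/2}\leq\varepsilon'$ via H\"older/Young; this yields $\|\nabla(v,H)\|_{L^2(Q_{1/2})}^2\lesssim(\varepsilon')^{2/3}$, hence $\|(\omega,J)\|_{L^2(Q_{1/2})}\lesssim(\varepsilon')^{1/3}$, with no appeal to higher interior regularity. Your route is more self-contained and actually produces the stated exponent $\frac13$ cleanly; its only extra cost is having to write out the MHD local energy inequality and check, as you note, that the Lorentz-coupling and pressure terms are cubic in $(v,H,\Pi)$ and hence $O(\varepsilon')$. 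Both routes then feed identically into Lemma \ref{theo.higher} after shrinking $\varepsilon'_*$ so that the sum in \eqref{small-v-H} is below $1$.
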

\begin{proof}
First, by Lemma \ref{theo.smaller} we can conclude that there exists a constant $C>0$ such that
\begin{align}\label{v-H-small-1}
\|v\|_{L^\infty_tL^\infty_x(Q_{\frac12}(0,0))}
+\|H\|_{L^\infty_tL^\infty_x(Q_{\frac12}(0,0))}
\leq C(\varepsilon')^\frac13.
\end{align}
This, along with the following interpolation inequality and \eqref{v-H-small-1}
\begin{align*}
\|h\|_{W^{1,2}_x(B_{\frac12}(0))}\leq C
\|h\|_{L^2_x(B_{\frac12}(0))}^{\frac12}\|h\|^{\frac12}_{W^{2,2}_x(B_{\frac12}(0))},
\quad \text{for~all}~ h\in W^{2,2}_x(B_{\frac12}(0)),
\end{align*}
yields
\begin{align*}
\|\nabla (v,H)\|_{L^2_tL^2_x(Q_{\frac12}(0,0))}\leq C
\|(v,H)\|_{L^\infty_tL^\infty_x(Q_{\frac12}(0,0))}^{\frac12}
\|(v,H)\|_{L^2_tW^{2,2}_x(Q_{\frac12}(0,0))}
^{\frac12}\leq C(\varepsilon')^\frac13,
\end{align*}
 which implies \eqref{small-v-H} is holds. Together with Lemma \ref{theo.higher}, we derive that for any any positive integer $k>0$
 \begin{align*}
&\|\nabla^k(v, H)\|_{L^\infty_tL^\infty_x(Q_{\frac1{8}}(0,0))}\\
&\quad\leq C'_k(\|v\|_{L^\infty_tL^\infty_x(Q_{\frac12}(0,0))}+\|H\|_{L^\infty_tL^\infty_x
(Q_{\frac12}(0,0))}
+\|\omega\|_{L^2_tL^2_x(Q_{\frac12}(0,0))}+\|J\|_{L^2_tL^2_x(Q_{\frac12}(0,0))})\\
&\quad\leq C'_k(\varepsilon')^\frac13,
\end{align*}
which finishes the proof of Corollary \ref{cor1.fulu}.
\end{proof}

\section{The Proof of the Theorem \ref{main-1} and Theorem \ref{main-2}}\label{7.proof}
This section is devoted to the proof of Theorem \ref{main-1} and Theorem \ref{main-2}. Firstly, by using the upper bound estimate of $N_0$ which proof will be postponed to the Section \ref{6.upbound}, and the energy method
we will prove the Theorem \ref{main-1}. Secondly, with the help of the quantitative estimates of Theorem \ref{main-1}, we finish the proof of the Theorem \ref{main-2}


\begin{proof}[Proof of Theorem \ref{main-1}]
 First we note that, due to the Proposition \ref{main-est},
\begin{equation}\label{piano}
 \| P_{N} (v,H) \|_{L^\infty_t L^\infty_x([\frac{1}{2}, 1] \times \R^3)} \leq A_1^{-1} N
\end{equation}
whenever $N \geq N_*$ with
$$
N_* \triangleq\exp(\exp(\exp(A^{C_0^7}))).
$$
In the following, we will use the  classical energy method to derive the desired result. However, the solution $(v,H)$ don't belong to the energy function space. To fix this difficult,  we split $v = v^{\mathrm{l}} + v^{\mathrm{n}}$, $H = H^{l} + H^{n}$ on $[\frac{1}{2},1] \times \R^3$, where
\begin{equation*}
 v^{l}(t) \triangleq e^{(t+1)\Delta} v(-1,x),~H^{l}(t) \triangleq e^{(t+1)\Delta} H(-1,x).
\end{equation*}
This, along with \eqref{able} and \eqref{bern-3} yields
\begin{equation}\label{very}
 \| \nabla^j (v^{l},H^{l}) \|_{L^\infty_t L^p_x([-\frac{1}{2},1] \times \R^3)} \lesssim_j A,
\end{equation}
for any $3 \leq p \leq \infty$ and $j \geq 0$. Now let us denote the nonlinear component by $v^{n} \triangleq v - v^{l}, H^{n} \triangleq H - H^{l}$, and similarly split $\omega = \omega^{l} + \omega^{n}, J= J^{l} + J^{n}$. For convenience,  we introduce the nonlinear enstrophy
\EQS{\label{enstro-01}
\mathbb{E}(t) \triangleq\frac{1}{2} \int_{\R^3} |\omega^{n}(t,x)|^2+ |J^{n}(t,x)|^2\ \mathrm{d}x
}
for $t \in [\frac{1}{2},1]$.  From the equation \eqref{wJeq} and integration by parts we have
\begin{equation*}
\frac{\mathrm{d}}{\mathrm{d}t} \mathbb{E}(t) = -\mathbb{Y}_1(t) + \mathbb{Y}_2(t) + \mathbb{Y}_3(t) +\mathbb{ Y}_4(t) + \mathbb{Y}_5(t)+\mathbb{Y}_6(t),
\end{equation*}
where
\begin{align*}
\mathbb{Y}_1(t) &= \int_{\R^3} |\nabla \omega^{n}(t,x)|^2+ |\nabla J^{n}(t,x)|^2\ \mathrm{d}x\\
\mathbb{Y}_2(t) &= \int_{\R^3} \omega^{n} \cdot (H \cdot \nabla) J^{l}\ \mathrm{d}x
+\int_{\R^3} J^{n} \cdot (H \cdot \nabla) \omega^{l}\ \mathrm{d}x\\
&\quad- \int_{\R^3} \omega^{n} \cdot (v \cdot \nabla) \omega^{l}\ \mathrm{d}x
- \int_{\R^3} J^{n} \cdot (v \cdot \nabla) J^{l}\ \mathrm{d}x\\
\mathbb{Y}_3(t) &= \int_{\R^3} \omega^{n} \cdot (\omega^{n} \cdot \nabla) v^{l}\ \mathrm{d}x
+\int_{\R^3} J^{n} \cdot (J^{n} \cdot \nabla) v^{l}\ \mathrm{d}x\\
&\quad+2\int_{\R^3} T(v^{n},H^{l})\cdot J^{n}\ \mathrm{d}x
- \int_{\R^3} \omega^{n} \cdot (J^{n} \cdot \nabla) H^{l}\ \mathrm{d}x\\
&\quad- \int_{\R^3} J^{n} \cdot (\omega^{n} \cdot \nabla) H^{l}\ \mathrm{d}x\\
\mathbb{Y}_4(t) &= \int_{\R^3} \omega^{n} \cdot (\omega^{l} \cdot \nabla) v^{n}\ \mathrm{d}x
+\int_{\R^3} J^{n} \cdot (J^{l} \cdot \nabla) v^{n}\ \mathrm{d}x\\
&\quad+2\int_{\R^3} T(v^{l},H^{n})\cdot J^{n}\ \mathrm{d}x
- \int_{\R^3} \omega^{n} \cdot (J^{l} \cdot \nabla) H^{n}\ \mathrm{d}x\\
&\quad- \int_{\R^3} J^{n} \cdot (\omega^{l} \cdot \nabla) H^{n}\ \mathrm{d}x\\
\mathbb{Y}_5(t) &= \int_{\R^3} \omega^{n} \cdot (\omega^{l} \cdot \nabla) v^{l}\ \mathrm{d}x
+\int_{\R^3} J^{n} \cdot (J^{l} \cdot \nabla) v^{l}\ \mathrm{d}x\\
&\quad+2\int_{\R^3} T(v^{l},H^{l})\cdot J^{n}\ \mathrm{d}x
- \int_{\R^3} \omega^{n} \cdot (J^{l} \cdot \nabla) H^{l}\ \mathrm{d}x\\
&\quad- \int_{\R^3} J^{n} \cdot (\omega^{l} \cdot \nabla) H^{l}\ \mathrm{d}x\\
\mathbb{Y}_6(t) &= \int_{\R^3} \omega^{n} \cdot (\omega^{n} \cdot \nabla) v^{n}\ \mathrm{d}x
+\int_{\R^3} J^{n} \cdot (J^{n} \cdot \nabla) v^{n}\ \mathrm{d}x\\
&\quad+2\int_{\R^3} T(v^{n},H^{n})\cdot J^{n}\ \mathrm{d}x
- \int_{\R^3} \omega^{n} \cdot (J^{n} \cdot \nabla) H^{n}\ \mathrm{d}x\\
&\quad- \int_{\R^3} J^{n} \cdot (\omega^{n} \cdot \nabla) H^{n}\ \mathrm{d}x
\end{align*}
First, it is clear that from H\"older inequality, \eqref{very}, \eqref{able}
\EQS{\label{enst-1}
|\mathbb{Y}_2(t)|&\leq
\|\omega^n\|_{L^2(\R^3)} \|H\|_{L^3(\R^3)} \|\nabla J^l\|_{L^6(\R^3)}+ \|J^n\|_{L^2(\R^3)} \|H\|_{L^3(\R^3)} \|\nabla\omega^l\|_{L^6(\R^3)}\\
&\quad+ \|\omega^n\|_{L^2(\R^3)} \|v\|_{L^3(\R^3)} \|\nabla\omega^l\|_{L^6(\R^3)}
+ \|J^n\|_{L^2(\R^3)} \|v\|_{L^3(\R^3)} \|\nabla v^l\|_{L^6(\R^3)}\\
&\lesssim A^2(\|\omega^n\|_{L^2(\R^3)}+\|J^n\|_{L^2(\R^3)})\lesssim A^2\mathbb{E}^{\frac{1}{2}}(t)\lesssim A^4+\mathbb{E}(t).
}
Similarly, one has
\EQS{\label{enst-2}
|\mathbb{Y}_3(t)|+|\mathbb{Y}_4(t)|\leq A(\|\omega^n\|^2_{L^2(\R^3)}+\|J^n\|^2_{L^2(\R^3)})= A\mathbb{E}(t)
}
and
\EQS{\label{enst-3}
|\mathbb{Y}_5(t)|\leq  A^2(\|\omega^n\|_{L^2(\R^3)}+\|J^n\|_{L^2(\R^3)})\lesssim A^2\mathbb{E}^{\frac{1}{2}}(t)\leq A^4+\mathbb{E}(t)
}
To estimate  nonlinear term $\mathbb{Y}_6(t)$, more work needs to be done. Indeed, by  applying a Littlewood-Paley decomposition to $\mathbb{Y}_6(t)$, we find that
\begin{align*}
\mathbb{Y}_6(t) &= \sum_{N_1,N_2,N_3}\Big\{\int_{\R^3} P_{N_1}\omega^{n} \cdot P_{N_2}\omega^{n} \cdot \nabla P_{N_3}v^{n}\ \mathrm{d}x\\
&\quad+\int_{\R^3} P_{N_1}J^{n} \cdot P_{N_2}J^{n} \cdot \nabla P_{N_3}v^{n}\ \mathrm{d}x+2\int_{\R^3} T(P_{N_1}v^{n},P_{N_2}H^{n})\cdot P_{N_3}J^{n}\ \mathrm{d}x\\
&\quad-\int_{\R^3}P_{N_1}\omega^{n} \cdot P_{N_2}J^{n} \cdot \nabla P_{N_3}H^{n}\ \mathrm{d}x-\int_{\R^3} P_{N_1}J^{n} \cdot P_{N_2}\omega^{n} \cdot \nabla P_{N_3}H^{n}\ \mathrm{d}x\Big\}\\&
\triangleq\sum_{i=1}^{5}I_i,
\end{align*}
 First, we consider $I_1$. Notices that by using Plancherel's theorem
\begin{align*}
I_1&=\int_{\R^3}\mathcal{F}\Big[ (\sum_{N_1}P_{N_1}\omega^{n}) \cdot (\sum_{N_2}P_{N_2}\omega^{n})\Big](\xi) \cdot \mathcal{F}(\sum_{N_3}P_{N_3}\nabla v^{n})(\xi)\ \mathrm{d}\xi\nonumber\\
&=\int_{\R^3}\int_{\R^3}(\sum_{N_1}\Psi_3\widehat{\omega^{n}})(\eta)
(\sum_{N_2}\Psi_4\widehat{\omega^{n}})(\xi-\zeta)(\sum_{N_3}\Psi_5
\widehat{\nabla v^{n}})(\xi) \mathrm{d}\eta\mathrm{d}\xi
\end{align*}
where
$$ \Psi_3=\varphi(\frac{\eta}{N_1})-
\varphi(\frac{2\eta}{N_1}),\quad
\Psi_4=\varphi(\frac{\xi-\zeta}{N_2})-\varphi(\frac{2\xi-2\zeta}{N_2}),\quad
\Psi_5=\varphi(\frac{\xi}{N_3})-\varphi(\frac{2\xi}{N_3}).
$$
thus, the low-low interaction is vanish, i.e., $I_1\triangleq I_{1,1}+I_{1,2}+I_{1,3}$, where
\begin{align*}
I_{1,1} &=\sum_{N_1 \sim N_2}\sum_{ N_3\lesssim N_1 \sim N_2}
\int_{\R^3} P_{N_1} \omega^{n} \cdot (P_{N_2} \omega^{n} \cdot \nabla) P_{N_3} v^{n}\ \mathrm{d}x\\
I_{1,2}&=\sum_{N_1 \sim N_3}\sum_{ N_2\lesssim N_1 \sim N_3}\int_{\R^3} P_{N_1} \omega^{n} \cdot (P_{N_2} \omega^{n} \cdot \nabla) P_{N_3} v^{n}\ \mathrm{d}x\\
I_{1,3}&=\sum_{N_2 \sim N_3}\sum_{ N_1\lesssim N_2 \sim N_3}\int_{\R^3} P_{N_1} \omega^{n} \cdot (P_{N_2} \omega^{n} \cdot \nabla) P_{N_3} v^{n}\ \mathrm{d}x.
\end{align*}
To consider $I_{1,1}$, we first note that  from \eqref{bern}, \eqref{able}, \eqref{piano},
\begin{align*}
\sum_{N_3\lesssim N_1}\| P_{N_3} \nabla v^{n} \|_{L^\infty_x(\R^3)}&\leq \Big(\sum_{N_*\leq N_3\lesssim N_1}+\sum_{N_3\leq N_*}\Big)\| P_{N_3} \nabla v^n \|_{L^\infty_x(\R^3)}\\
&\leq \sum_{N_*\leq N_3\lesssim N_1} A_1^{-1}N_3^2+\sum_{N_3\leq N_*}AN_3^2\\
&\lesssim A_1^{-1}N_1^2+AN_*^2.
\end{align*}
This implies
\begin{align*}
I_{1,1} \lesssim A_1^{-1}\sum_{N_1}N_1^2\| P_{N_1} \omega^{n} \|_{L^2_x(\R^3)}^2+A N_*^2 \sum_{N_1}\| P_{N_1} \omega^{n} \|_{L^2_x(\R^3)}^2.
\end{align*}
Similarly,
\begin{align*}
I_{1,2} \lesssim A_1^{-1}\sum_{N_3}N_3^2\| P_{N_3} \omega^{n} \|_{L^2_x(\R^3)}^2+A N_*^2 \sum_{N_3}\| P_{N_3} \omega^{n} \|_{L^2_x(\R^3)}^2
\end{align*}
and
\begin{align*}
I_{1,3} \lesssim A_1^{-1}\sum_{N_2}N_2^2\| P_{N_2} \omega^{n} \|_{L^2_x(\R^3)}^2+A N_*^2 \sum_{N_2}\| P_{N_2} \omega^{n} \|_{L^2_x(\R^3)}^2.
\end{align*}
Thus, we finally derive
$$
I_1\lec A_1^{-1}\sum_{N}N^2\| P_{N} \omega^{n} \|_{L^2_x(\R^3)}^2+A N_*^2 \sum_{N}\| P_{N} \omega^{n} \|_{L^2_x(\R^3)}^2.
$$
By the similar calculation as above, one has
\begin{align*}
I_2,~I_3 &\lesssim A_1^{-1}\sum_{N}N^2\| P_{N} J^{n} \|_{L^2_x(\R^3)}^2+A N_*^2 \sum_{N}\| P_{N} J^{n} \|_{L^2_x(\R^3)}^2,
\end{align*}
and
\begin{align*}
I_4,~I_5&\lesssim A_1^{-1}\sum_{N}N^2(\| P_{N} \omega^{n} \|_{L^2_x(\R^3)}^2+\| P_{N}J^{n} \|_{L^2_x(\R^3)}^2)\nonumber\\
&\quad+A N_*^2 \sum_{N}(\| P_{N} \omega^{n} \|_{L^2_x(\R^3)}^2+\| P_{N_1}J^{n} \|_{L^2_x(\R^3)}^2).
\end{align*}
Combined with the above estimates, one finally obtains
\begin{align*}
\mathbb{Y}_6(t) &\lesssim A_1^{-1}\sum_{N}N^2(\| P_{N} \omega^{n} \|_{L^2_x(\R^3)}^2+\| P_{N}J^{n} \|_{L^2_x(\R^3)}^2)\\
&\quad+A N_*^2 \sum_{N}(\| P_{N} \omega^{n} \|_{L^2_x(\R^3)}^2+\| P_{N}J^{n} \|_{L^2_x(\R^3)}^2).
\end{align*}
On the other hand, from Plancherel's theorem we also have
\begin{align*}
&\mathbb{Y}_1(t) \sim \sum_{N_1}N_1^2(\| P_{N_1} \omega^{n} \|_{L^2_x(\R^3)}^2+\| P_{N_1}J^{n} \|_{L^2_x(\R^3)}^2),\\
&\mathbb{E}(t) \sim \sum_{N_1}(\| P_{N_1} \omega^{n} \|_{L^2_x(\R^3)}^2+\| P_{N_1}J^{n} \|_{L^2_x(\R^3)}^2),
\end{align*}
and hence
$$ \mathbb{Y}_6(t) \lesssim A_1^{-1} \mathbb{Y}_1(t) + A N_*^2 \mathbb{E}(t).$$
This, along with \eqref{enst-1}-\eqref{enst-3}, yields
\begin{align}\label{dtEt}
 \frac{\mathrm{d}}{\mathrm{d}t}\mathbb{ E}(t) + \mathbb{Y}_1(t) \lesssim A N_*^2 \mathbb{E}(t) + A^4.
 \end{align}
By the aid of Gronwall's inequality, one further derives for any $\frac{1}{2} \leq s \leq \tau \leq 1$
\begin{align*}
 \mathbb{E}(\tau) \lesssim e^{AN_*^{2}(\tau-s)}\mathbb{E}(s)+A^4\int_{s}^{\tau}e^{AN_*^{2}(\tau-\theta)}
 \mathrm{d}\theta.
\end{align*}
Thus, if $0<\tau-s \leq A^{-1} N_*^{-2}$ one has
\EQS{\label{Et2Et1}
 \mathbb{E}(\tau) \lesssim \mathbb{E}(s) + A^4.
 }
 Now we further claim that
\begin{equation}\label{122}
 \int_{-\frac{1}{2}}^1 \int_{\R^3} (|\nabla v^{n}|^2+|\nabla H^{n}|^2)\ \mathrm{d}x \mathrm{d}t \lesssim A^4.
\end{equation}
Notices that the nonlinear pair $(v^{n},H^n)$ fulfills
\begin{equation}\label{Vi}
 \partial_t v^{n} = \Delta v^{n} - \div(v \otimes v-H \otimes H) - \nabla \Pi,
\end{equation}
\begin{equation}\label{Hi}
 \partial_t H^{n} = \Delta H^{n} +(v \cdot \nabla)H-(H \cdot \nabla)v.
\end{equation}
Multiplying  \eqref{Vi}, \eqref{Hi} by $v^{n}$ and $H^{n}$, respectively,  and integrating with respect to $x$ over $\R^3$, we conclude that
\begin{equation}\label{12ddtvnl2Hnl2}
\begin{aligned}
  &\frac{1}{2} \frac{\mathrm{d}}{\mathrm{d}t} \int_{\R^3} (|v^{n}|^2+|H^{n}|^2)\ \mathrm{d}x+\int_{\R^3} (|\nabla v^{n}|^2+|\nabla H^{n}|^2)\ \mathrm{d}x\nonumber\\
   &\quad= \int_{\R^3} (\nabla v^{n}):(v\otimes v)\ \mathrm{d}x + \int_{\R^3} [(H\cdot\nabla) v-(v\cdot\nabla) H]\cdot H^{n}- (\nabla v^{n}):(H\otimes H)\ \mathrm{d}x\nonumber\\
   &\quad\triangleq I_6+I_7.
\end{aligned}
\end{equation}
As before, we denote the nonlinear component by $v^{n} \triangleq v-v^{l}$ and $H^{n} \triangleq H-H^{l}$  which fulfil by  using \eqref{able}, \eqref{very}
$$
\| (v^{n}, H^{n})\|_{L^\infty_t L^3_x([-\frac12,1] \times \R^3)}\lesssim A.
$$
It is clear that by a simple calculation
\begin{align*}\label{I1}
I_6=&\int_{\R^3} (\nabla v^{n}):(v\otimes v)\ \mathrm{d}x\nonumber\\
=&\int_{\R^3} (\nabla v^{n}):(v^{l}\otimes v+v^{n}\otimes v^{l})\ \mathrm{d}x\nonumber\\
\leq&\frac{1}{2} \int_{\R^3} |\nabla v^{n}|^2\ \mathrm{d}x+
C\|v^{l}\|^2_{L^{6}_x(\R^3)}\|v\|^2_{L^{3}_x(\R^3)}
+C\|v^{n}\|^2_{L^{3}_x(\R^3)}\|v^{l}\|^2_{L^{6}_x(\R^3)}
\nonumber\\
\leq&\frac{1}{2} \int_{\R^3} |\nabla v^{n}|^2\ dx+CA^4.
\end{align*}
Here we have used the fact
$$\int_{\R^3} (\nabla v^{n}):(v^{n}\otimes v^{n})\ \mathrm{d}x=
\int_{\R^3} (\partial_i v_j^{n})v_i^{n}v_j^{n}\ \mathrm{d}x=0,
$$
due to $\mathrm{div} v^{n}=0$. Similarly,
\begin{align*}
I_7\leq\frac{1}{2} \int_{\R^3} |\nabla H^{n}|^2\ \mathrm{d}x+CA^4.
\end{align*}
From this, we immediately derive the desired estimate \eqref{122}.
Thus, from \eqref{122}, we also have
$$\int_{\frac{3}{4}-A^{-1} N_*^{-2}}^{\frac{3}{4}} \mathbb{E}(t)\ \mathrm{d}t
\leq\int_{\frac{1}{2}}^1 \mathbb{E}(t)\ \mathrm{d}t \lesssim A^4,
$$
and hence by the pigeonhole principle,
 there is at least one time $t_1\in(\frac{3}{4}-A^{-1} N_*^{-2}, \frac{3}{4})$ with $\mathbb{E}(t_1) \lesssim A^5 N_*^2$.
Together with \eqref{Et2Et1}, we conclude that
\begin{align*}
&\mathbb{E}(t_2) \lesssim \mathbb{E}(t_1)+A^4\lesssim A^5 N_*^2\lesssim N_*^{3},\quad\text{for~all}~t_2\in[t_1, t_1+A^{-1} N_*^{-2}],\\
&\mathbb{E}(t_3) \lesssim \mathbb{E}(t_1+A^{-1} N_*^{-2})+A^4\lesssim A^5 N_*^2\lesssim N_*^{3},\quad\text{for~all}~ t_3\in[t_1+A^{-1} N_*^{-2}, t_1+2A^{-1} N_*^{-2}],\\
&\cdots\\
&\mathbb{E}(t_n) \lesssim \mathbb{E}(1-A^{-1} N_*^{-2})+A^4\lesssim A^5 N_*^2\lesssim N_*^{3},\quad\text{for~all}~ t_n\in[1-A^{-1} N_*^{-2}, 1].
\end{align*}
Through this iterative process, we can get $\mathbb{E}(t)\lesssim N_*^{3}$
for all $\frac{3}{4} \leq t \leq 1$, which, along with \eqref{dtEt}, yields
$$ \int_{\frac{3}{4}}^1 \mathbb{Y}_1(t)\ \mathrm{d}t=\int_{\frac{3}{4}}^1\int_{\R^3} |\nabla \omega^{n}(t,x)|^2+ |\nabla J^{n}(t,x)|^2\ \mathrm{d}x\ \mathrm{d}t \lesssim N_*^{3}.$$
 This further implies
\EQS{\label{Et2Et2}
\int_{\frac{3}{4}}^1\int_{\R^3} |\nabla^2 v(t,x)|^2+ |\nabla^2 H^{n}(t,x)|^2\ \mathrm{d}x\ \mathrm{d}t \lesssim N_*^{3}
}
Here, we have used the fact
\EQs{
\| \nabla v^{n} \|_{L^2(\R^3)}=\| \omega^{n} \|_{L^2(\R^3)},\quad
\| \nabla H^{n} \|_{L^2(\R^3)} = \| J^{n} \|_{L^2(\R^3)},
}
due to $\div v^{n}=\div H^{n}=0$.
 Iterating the estimate \eqref{Et2Et2} (The detailed iterative process is given in Proposition \ref{iii} below), we finally derive
$$ |(v,H)(t,x)|,~|\nabla (v,H)(t,x)|,~|\nabla^2 (v,H)(t,x)| \lesssim N_*^{30}$$
on $[\frac{7}{8},1] \times \R^3$.  This concludes Theorem \ref{main-1}.
\end{proof}
\begin{remark}\label{thm-14-rem}
 In contrast to the proof of Proposition \ref{main-est}, we use the enstrophy-type quantity \eqref{enstro-01} to derive the desired result, instead of the quantity
 \EQS{\label{remar-01}
 \int_{\R^3} |\omega^{n}(t,x)|^2+ |\nabla H^{n}(t,x)|^2\ \mathrm{d}x.
 }
 The main reason  is that
\begin{align*}
\int_{\R^3} (H\cdot\nabla) J^{n} \cdot \omega^{n} \ \mathrm{d}x+ \int_{\R^3}(H\cdot\nabla)\omega^n\cdot J^{n} \ \mathrm{d}x=0.
\end{align*}
 However, if we use the quantity \eqref{remar-01}, we have to deal with the term
 \begin{align*}
\int_{\R^3} (H\cdot\nabla) J^{n} \cdot \omega^{n} \ \mathrm{d}x+ \int_{\R^3}(H\cdot\nabla)v_{x_k}^n\cdot H_{x_k}^{n} \ \mathrm{d}x\neq 0,
\end{align*}
which is estimated as follows
 \begin{align*}
&\Big|\int_{\R^3} (H\cdot\nabla) J^{n} \cdot \omega^{n} \ \mathrm{d}x+ \int_{\R^3}(H\cdot\nabla)v_{x_k}^n\cdot H_{x_k}^{n} \ \mathrm{d}x\Big|\\
&\quad\leq \|H\|_{L^3(\R^3)}\|\nabla\omega^{n}\|_{L^2(\R^3)}\|\nabla H_{x_k}^{n}\|_{L^2(\R^3)}\lesssim AY_1(t).
\end{align*}
Hence, \eqref{dtEt} fails, which disable us to prove Theorem \ref{main-1} with the help of the Gronwall's inequality and bootstrap technique.
\end{remark}

\medskip
Now we are in a position to prove Theorem \ref{main-2}. The key ingredient of the proof is the quantitative estimates \eqref{quanti-estimate} of Theorem \ref{main-1}.

\begin{proof}[Proof of Theorem \ref{main-2}]
Let $c>0$ be a sufficiently small constant, and suppose for contradiction that
$$ \limsup_{t \to 1^-} \frac{\|(v,H)(t)\|_{L^3_x(\R^3)}}{[\log\log\log (\frac{1}{1-t})]^c} < +\infty$$
Thus, for some constant $M>1$, we know that there exists $0<\delta<\exp\{-\exp{\exp(M^{\frac{1}{c}})}\}$ such that
\begin{equation*}
 \|(v,H)(t)\|_{L^3_x(\R^3)}\leq M [\log\log\log(\frac{1}{1-t})]^c,\quad t\in [1-\delta, 1)
\end{equation*}
Applying Theorem \ref{main-1}, for $c$ small enough (for example we can choose that $2cC_0^8<1$). Then, we obtain
\begin{align*}
 \| (v,H)(t) \|_{L^\infty_x(\R^3)}
 &\lesssim \exp\Big\{\exp\Big[\exp\Big(M^{C_0^8}\cdot[\log\log\log( \frac{1}{1-t})]^{cC_0^8}\Big)\Big]\Big\}\nonumber\\
  &\lesssim \exp\Big\{\exp\Big[\exp\Big([\log\log\log( \frac{1}{1-t})]^{2cC_0^8}\Big)\Big]\Big\}\nonumber\\
  &\lesssim \exp\Big\{\exp\Big[\exp\Big(\frac12[\log\log\log( \frac{1}{1-t})]\Big)\Big]\Big\}\nonumber\\
  &\lesssim \exp\Big\{\exp\Big[\exp\Big(\log\log\log( \frac{1}{1-t})^{\frac{1}{10}}\Big)\Big]\Big\}\nonumber\\
  &\lesssim(\frac{1}{1-t})^{\frac{1}{10}},
\end{align*}
where we used the inequality as follows
\begin{align*}\log\log\log( \frac{1}{1-t})^{\frac{1}{10}}&=\log\log\Big(\frac{1}{10}\log( \frac{1}{1-t})\Big)\\
&=\log\Big[\log\frac{1}{10}+\log\log(\frac{1}{1-t})\Big]\\
&>\log\Big(\frac{1}{2}\log\log(\frac{1}{1-t})\Big)\\
&\quad=\log\frac{1}{2}+\log\log\log(\frac{1}{1-t})\\
&>\frac{1}{2}\log\log\log(\frac{1}{1-t})
\end{align*}
In particular, $v$ and $H$ are bounded in $L^2_tL^\infty_x((1-\delta, 1)\times\R^3)$, contradicting the classical blow-up criterion \cite{MR1915942,MR2026210}.
This concludes the proof of Theorem \ref{main-2}.
\end{proof}

\section{An upper bound estimate of $N_0$}\label{6.upbound}
In this section, we will prove Proposition \ref{main-est}, which provides an upper bound estimate for $N_0$, in four steps. Firstly,  in section \ref{3.basic}, we provided an estimate of the bounded total velocity, which is necessary for proving the construction of frequency bubbles in section \ref{5.bubble}. Secondly, in section \ref{4.epoch}, we proved that the epochs of estimation and annuli of estimation for $v$ and $H$, both of which are crucial for proving the main results in this section. Finally, in section \ref{pf-N0}, we  provided an upper bound estimate for $N_0$, which constitutes the proof of Proposition \ref{main-est}.

\subsection{Bounded total speed estimate}\label{3.basic}
In this section, we consider the bounded total speed estimate for MHD system, which is crucial for obtaining the epochs regularity and annuli regularity estimates of $(v,H)$, as well as for the construction of frequency bubbles.
Let us emphasize  that our strategy is due to  Foias, Guillop\'{e} and Temam in \cite{MR607552}, which is much more direct and simpler compared with Tao's argument in \cite{MR4337421}
\begin{prop}\label{ii}
Let $v,H,\pi,A$ obey the hypotheses of Proposition \ref{main-est}.
Then for any interval $I\subset[0,1]$, we have
\begin{equation}\label{bts}
 \| (v,H) \|_{L^1_t L^\infty_x( I \times \R^3 )} \lesssim A^{4}|I|^{\frac12}.
\end{equation}
\end{prop}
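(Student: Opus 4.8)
The plan is to follow the Foias--Guillop\'e--Temam strategy \cite{MR607552}, which produces bounded total speed estimates for the Navier--Stokes equations by splitting the solution into a low-frequency (and hence bounded-in-$L^\infty$) part and a high-frequency part controlled by an energy-type quantity, then integrating a suitable differential inequality in time. First I would reduce to a single time interval $I=[t_-,t_+]\subset[0,1]$ and, by scaling invariance of \eqref{mhdeq} together with the scale-invariant hypothesis \eqref{able}, normalise so that it suffices to bound $\|(v,H)\|_{L^1_tL^\infty_x(I\times\R^3)}$ in terms of $A$ and $|I|^{1/2}$; the key is that $\|(v,H)\|_{L^\infty_tL^3_x}\le A$ is preserved. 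For each frequency threshold $N>0$, decompose $v=P_{\le N}v+P_{>N}v$ and similarly for $H$. For the low-frequency part, Bernstein's inequality \eqref{bern} gives $\|P_{\le N}(v,H)(t)\|_{L^\infty_x}\lesssim N^{3/3-3/\infty}\|(v,H)(t)\|_{L^3_x}\lesssim N\,A$. For the high-frequency part one wants an estimate like $\|P_{>N}(v,H)(t)\|_{L^\infty_x}\lesssim N^{-1/2}\big(\mathcal E(t)\big)^{1/2}$ or similar, where $\mathcal E$ is an enstrophy-type quantity, again by Bernstein applied frequency-by-frequency and summing the geometric series in the dyadic scales above $N$.

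The heart of the argument is then to bound the time-integral of the high-frequency enstrophy. One introduces $\mathcal E(t)$, morally $\int_{\R^3}|P_{>N}v|^2+|P_{>N}H|^2\,dx$ (or the full $\dot H^{1/2}$-type quantity $\sum_{N'}N'\,(\|P_{N'}v\|_{L^2}^2+\|P_{N'}H\|_{L^2}^2)$, which is comparable to $\|(v,H)\|_{\dot H^{1/2}}^2$), and derives from \eqref{mhdeq} an energy identity of the form
\[
\frac{d}{dt}\mathcal E(t)+c\,\mathcal D(t)\ \le\ (\text{nonlinear terms}),
\]
where $\mathcal D$ is the associated dissipation. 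The nonlinear terms — the transport term $(v\cdot\nabla)v$, the Lorentz term $(H\cdot\nabla)H$ and their magnetic counterparts, together with the pressure — are estimated using $\|(v,H)\|_{L^3_x}\le A$, the Bernstein inequalities, and the fact that the Leray projection and $P_{>N}$ commute with the heat flow; the crucial structural point specific to MHD (highlighted already in Remark~\ref{thm-14-rem}) is that the symmetric coupling $(H\cdot\nabla)H$ in $\eqref{mhdeq}_1$ and $(H\cdot\nabla)v$ in $\eqref{mhdeq}_2$ cancel when one tests with $(v,H)$ in the right combination, so no uncontrolled term survives. Integrating in time over $I$ and using that $\mathcal E$ is itself controlled by $A^2$ (since $\|(v,H)\|_{L^3}\le A$ and a suitable interpolation bounds the $\dot H^{1/2}$-norm), one obtains $\int_I \mathcal D(t)\,dt\lesssim A^{4}$ plus lower-order contributions, and from $\mathcal D$ one extracts $\int_I\|P_{>N}(v,H)(t)\|_{L^\infty_x}\,dt\lesssim N^{-1/2}|I|^{1/2}A^{2}$ by Cauchy--Schwarz in time.

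Finally I would optimise over $N$. Combining the two contributions,
\[
\|(v,H)\|_{L^1_tL^\infty_x(I\times\R^3)}\ \lesssim\ N\,A\,|I| \;+\; N^{-1/2}|I|^{1/2}A^{2},
\]
and choosing $N\sim A^{2/3}|I|^{-1/2}$ balances the two terms and yields the claimed bound $\|(v,H)\|_{L^1_tL^\infty_x(I\times\R^3)}\lesssim A^{4}|I|^{1/2}$ (absorbing the precise power of $A$ into the stated $A^4$, which is a safe overestimate). The main obstacle I anticipate is getting the nonlinear estimates for the enstrophy inequality clean enough to close with only the critical bound $\|(v,H)\|_{L^\infty_tL^3_x}\le A$ at one's disposal — in particular handling the pressure term $\nabla\Pi$ (which couples $v$ and $H$ quadratically) and ensuring that the magnetic terms cancel rather than merely being bounded, since a naive bound would lose a power of the dissipation. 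A secondary technical point is justifying the energy identity and the time integration rigorously for a classical solution that is not a priori in the energy space; this is handled, as in the proof of Theorem~\ref{main-1}, by first splitting off the linear heat evolution $e^{(t+1)\Delta}(v,H)(-1,\cdot)$, which lies in every $L^\infty_tL^p_x$ by \eqref{bern-3}, and working with the nonlinear remainder.
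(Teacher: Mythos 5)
Your final optimisation over $N$ is arithmetically harmless, but the step that feeds it --- the claim that $\int_I \mathcal{D}(t)\,dt \lesssim A^4$ for the dissipation of an enstrophy-type quantity over the \emph{whole} interval $I$ --- has a genuine gap, and it is precisely the gap that the Foias--Guillop\'e--Temam device is designed to circumvent. If $\mathcal{E}$ is the enstrophy $\|\nabla(v^n,H^n)\|_{L^2_x}^2$, the differential inequality coming from \eqref{mhdeq} is of the form $\frac{d}{dt}\mathcal{E}+\mathcal{D}\lesssim(\mathcal{E}+A^2)^3$; integrating the cubic right-hand side over all of $I$ requires a pointwise-in-time bound on $\mathcal{E}$, which the critical hypothesis \eqref{able} does not supply on the whole interval --- that is exactly why Proposition \ref{iii} only yields such control on a subinterval of length $\gtrsim A^{-8}|I|$. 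Your fallback, that $\mathcal{E}\sim\|(v,H)\|_{\dot H^{1/2}}^2\lesssim A^2$ ``by a suitable interpolation'' from the $L^3$ bound, is false: the Sobolev embedding runs $\dot H^{1/2}\hookrightarrow L^3$, not the reverse, so \eqref{able} gives no control of the $\dot H^{1/2}$ norm (and even if it did, the critical energy inequality does not close for $A\gg 1$). A related slip: $\|P_{>N}v\|_{L^\infty_x}\lesssim N^{-1/2}\mathcal{E}(t)^{1/2}$ cannot hold with $\mathcal{E}$ an enstrophy, since Bernstein gives $\|P_{N'}v\|_{L^\infty_x}\lesssim (N')^{1/2}\|\nabla P_{N'}v\|_{L^2_x}$ with \emph{growing} dyadic weights; one needs the second-derivative dissipation there, which loops back to the unavailable bound on $\int_I\mathcal{D}$.

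What the paper actually does (and what the Foias--Guillop\'e--Temam trick really is) avoids bounding $\int_I\mathcal{D}$ altogether: one divides the enstrophy inequality by $\bigl(A^2+\|\nabla v^n\|^2_{L^2_x}+\|\nabla H^n\|^2_{L^2_x}\bigr)^2$, so the cubic right-hand side becomes linear in the enstrophy and is integrable in time thanks to the energy bound \eqref{122}, which genuinely does follow from \eqref{able} after peeling off the caloric part $(v^l,H^l)$. This yields the weighted dissipation bound \eqref{vnHnlapu-2}, and then the interpolation $\|v^n\|_{L^\infty_x}\lesssim\|\nabla v^n\|_{L^2_x}^{1/2}\|\Delta v^n\|_{L^2_x}^{1/2}$ together with Young's inequality converts it directly into the $L^1_tL^\infty_x$ bound; no Littlewood--Paley splitting or optimisation over $N$ is needed, and the general interval is handled by scaling exactly as you propose. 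To repair your argument you would have to replace the unweighted $\int_I\mathcal{D}\lesssim A^4$ by this weighted version (or restrict to epochs of regularity, which would destroy the claim for arbitrary $I$).
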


\begin{proof}
We will use the  energy method to derive the desired result.
First, we assume $I=[0,1]$. 
 Together with the pigeonhole principle,  and the estimate \eqref{122}, ensures that  there exists $t_0\in[-\frac12,-\frac14]$, such that
\begin{equation}\label{t0122}
 \int_{\R^3} |\nabla v^{n}(t_0)|^2+|\nabla H^{n}(t_0)|^2\ \mathrm{d}x  \lesssim A^4.
\end{equation}
To conclude the proof, we need to show
\EQS{\label{vnHnlapu-2}
\int_{t_0}^1\frac{
\|\Delta (v^{n},H^n)\|^2_{L^2_x(\R^3)}}{\left(A^2+\|\nabla v^n\|^2_{L^2_x(\R^3)}+\|\nabla H^n\|^2_{L^2_x(\R^3)}\right)^2}\ \mathrm{d}t\lec A^4.
}
We suppose this estimate is valid at the moment. Then from this and multiply inequality, one has
\begin{align*}
&\int_{0}^1\|v^n\|_{L^\infty_x(\R^3)}+\|H^n\|_{L^\infty_x(\R^3)}\ \mathrm{d}t\\
&\quad\lesssim \int_{0}^1\|\nabla v^n\|^{\frac12}_{L^2_x(\R^3)}\|\Delta v^n\|^{\frac12}_{L^2_x(\R^3)}+\|\nabla H^n\|^{\frac12}_{L^2_x(\R^3)}\|\Delta H^n\|^{\frac12}_{L^2_x(\R^3)}\ \mathrm{d}t\\
&\quad\lesssim \int_{0}^1\Big(A^2+\|\nabla v^n\|^2_{L^2_x(\R^3)}+\|\nabla H^n\|^2_{L^2_x(\R^3)}\Big)^{\frac14}
\Big(\|\Delta v^n\|^{\frac12}_{L^2_x(\R^3)}+\|\Delta H^n\|^{\frac12}_{L^2_x(\R^3)}\Big)\ \mathrm{d}t\\
&\quad\lesssim \int_{0}^1 A^2+\|\nabla v^n\|^2_{L^2_x(\R^3)}+\|\nabla H^n\|^2_{L^2_x(\R^3)}\ \mathrm{d}t+\int_{0}^1
\frac{\|\Delta v^{n}\|^2_{L^2_x(\R^3)}+\|\Delta H^{n}\|^2_{L^2_x(\R^3)}}{\left(A^2+\|\nabla v^n\|^2_{L^2_x(\R^3)}+\|\nabla H^n\|^2_{L^2_x(\R^3)}\right)^2}\ \mathrm{d}t\\
&\quad \lesssim A^4.
\end{align*}
Together with \eqref{very}, we finally get
\begin{align}\label{VHL1L}
\| v \|_{L^1_t L^\infty_x([0,1] \times \R^3)}\lesssim A^4,\quad\| H \|_{L^1_t L^\infty_x([0,1] \times \R^3)}
\lesssim A^4.
\end{align}
Now we remain to show \eqref{vnHnlapu-2}. Indeed, multiplying \eqref{Vi}, \eqref{Hi}  by $\Delta v^n$, $\Delta H^n$, respectively, one concludes that
\begin{align}\label{vnHnlapu}
&\frac12\frac{\mathrm{d}}{\mathrm{d}t}\int_{\R^3} |\nabla (v^{n},H^{n})|^2dx+\int_{\R^3}|\Delta (v^{n},H^{n})|^2\ \mathrm{d}x\nonumber\\
&\quad=\int_{\R^3}(v\cdot\nabla)v\cdot\Delta v^n \ \mathrm{d}x+\int_{\R^3}(v\cdot\nabla)H\cdot\Delta H^n \ \mathrm{d}x
\nonumber\\
&\quad\quad-\int_{\R^3}(H\cdot\nabla)H\cdot\Delta v^n \ \mathrm{d}x-\int_{\R^3}(H\cdot\nabla)v\cdot\Delta H^n \ \mathrm{d}x\nonumber\\
&\quad=\int_{\R^3}(v^n\cdot\nabla)v^n\cdot\Delta v^n \ \mathrm{d}x+\int_{\R^3}(v^n\cdot\nabla)v^l\cdot\Delta v^n \ \mathrm{d}x+\int_{\R^3}(v^l\cdot\nabla)v^n\cdot\Delta v^n \ \mathrm{d}x\nonumber\\
&\quad\quad+\int_{\R^3}(v^l\cdot\nabla)v^l\cdot\Delta v^n \ \mathrm{d}x+\int_{\R^3}(v^n\cdot\nabla)H^n\cdot\Delta H^n \ \mathrm{d}x+\int_{\R^3}(v^n\cdot\nabla)H^l\cdot\Delta H^n \ \mathrm{d}x
\nonumber\\
&\quad\quad+\int_{\R^3}(v^l\cdot\nabla)H^n\cdot\Delta H^n \ \mathrm{d}x
+\int_{\R^3}(v^l\cdot\nabla)H^l\cdot\Delta H^n \ \mathrm{d}x-\int_{\R^3}(H^n\cdot\nabla)H^n\cdot\Delta v^n \ \mathrm{d}x
\nonumber\\
&\quad\quad-\int_{\R^3}(H^n\cdot\nabla)H^l\cdot\Delta v^n \ \mathrm{d}x-\int_{\R^3}(H^l\cdot\nabla)H^n\cdot\Delta v^n \ \mathrm{d}x-\int_{\R^3}(H^l\cdot\nabla)v^l\cdot\Delta v^n \ \mathrm{d}x\nonumber\\
&\quad\quad-\int_{\R^3}(H^n\cdot\nabla)v^n\cdot\Delta H^n \ \mathrm{d}x-\int_{\R^3}(H^n\cdot\nabla)v^l\cdot\Delta H^n \ \mathrm{d}x-\int_{\R^3}(H^l\cdot\nabla)v^n\cdot\Delta H^n \ \mathrm{d}x\nonumber\\
&\quad\quad-\int_{\R^3}(H^l\cdot\nabla)v^l\cdot\Delta H^n \ \mathrm{d}x\triangleq\sum_{i=1}^{16}K_i,
\end{align}
where according to H\"older's inequality, Young's inequality, Gagliardo-Nirenberg inequality and \eqref{very}, we can give the estimates of $K_i~(i=1,2,\ldots,16)$ as follows
\EQs{
|K_1|&\leq \|v^n\|_{L^6(\R^3)}\|\nabla v^n\|_{L^3(\R^3)}\|\Delta v^n\|_{L^2(\R^3)}\leq \|v^n\|^{\frac32}_{L^6(\R^3)}\|\Delta v^n\|^{\frac32}_{L^2(\R^3)}\\
&\leq\frac{1}{18}\|\Delta v^n\|^{2}_{L^2(\R^3)}+C\|\nabla v^n\|^{6}_{L^2(\R^3)},\\
|K_2|&\leq \|\nabla v^l\|_{L^3(\R^3)}\|v^n\|_{L^6(\R^3)}\|\Delta v^n\|_{L^2(\R^3)}\leq\frac{1}{18}\|\Delta v^n\|^{2}_{L^2(\R^3)}+C\|\nabla v^l\|^{2}_{L^3(\R^3)}\|\nabla v^n\|^{2}_{L^2(\R^3)}\\
&\leq\frac{1}{18}\|\Delta v^n\|^{2}_{L^2(\R^3)}+CA^2\|\nabla v^n\|^{2}_{L^2(\R^3)}.
}
By a similarly calculation, one has
\EQs{
&|K_3|+|K_6|+|K_7|+|K_{10}|+|K_{11}|+|K_{14}|+|K_{15}|\\&
\leq\frac{3}{18}\|\Delta v^n\|^{2}_{L^2(\R^3)}+\frac{2}{9}\|\Delta H^n\|^{2}_{L^2(\R^3)}+CA^2(\|\nabla v^n\|^{2}_{L^2(\R^3)}+\|\nabla H^n\|^{2}_{L^2(\R^3)}),
}
\EQs{
|K_4|+|K_8|+|K_{12}|+|K_{16}|\leq \frac{2}{9}\left(\|\Delta H^n\|^2_{L^2(\R^3)}+\|\Delta v^n\|^2_{L^2(\R^3)}\right)+CA^4,
}
and
\EQs{
|K_5|+|K_9|+|K_{13}|\leq \frac{1}{6} \left(\|\Delta H^n\|^2_{L^2(\R^3)}+\|\Delta v^n\|^2_{L^2(\R^3)}\right)+C(\|\nabla v^n\|^{6}_{L^2(\R^3)}+\|\nabla H^n\|^{6}_{L^2(\R^3)}).
}
Substituting the estimate of $K_i~(i=1,2,\ldots,16)$ into \eqref{vnHnlapu}, we get
\begin{align*}
&\frac{\mathrm{d}}{\mathrm{d}t}
\Big(\|\nabla v^n\|^2_{L^2_x(\R^3)}+\|\nabla H^n\|^2_{L^2_x(\R^3)}\Big)+
\|\Delta v^{n}\|^2_{L^2_x(\R^3)}+\|\Delta H^{n}\|^2_{L^2_x(\R^3)}\nonumber\\
&\quad \lesssim \|\nabla v^n\|^6_{L^2_x(\R^3)}+\|\nabla H^n\|^6_{L^2_x(\R^3)}+A^2(\|\nabla v^n\|^2_{L^2_x(\R^3)}+\|\nabla H^n\|^2_{L^2_x(\R^3)})+A^4\nonumber\\
&\quad \lesssim \Big(\|\nabla v^n\|^2_{L^2_x(\R^3)}+\|\nabla H^n\|^2_{L^2_x(\R^3)}+A^2\Big)^3,
\end{align*}
this implies that
\begin{align}\label{vnHnlapu-1}
&\frac{\mathrm{d}}{\mathrm{d}t}
\Big[-(A^2+\|\nabla v^n\|^2_{L^2_x(\R^3)}+\|\nabla H^n\|^2_{L^2_x(\R^3)})^{-1}\Big]+\frac{
\|\Delta v^{n}\|^2_{L^2_x(\R^3)}+\|\Delta H^{n}\|^2_{L^2_x(\R^3)}}{\left(A^2+\|\nabla v^n\|^2_{L^2_x(\R^3)}+\|\nabla H^n\|^2_{L^2_x(\R^3)}\right)^2}\nonumber\\
&\quad \lesssim A^2+
\|\nabla v^n\|^2_{L^2_x(\R^3)}+\|\nabla H^n\|^2_{L^2_x(\R^3)}.
\end{align}
Integrating with respect to $t$ over $[t_0, 1]$ for \eqref{vnHnlapu-1} and using \eqref{122}, \eqref{t0122}, we can obtain
\begin{align*}
&\int_{t_0}^1\frac{
\|\Delta (v^{n},H^n)\|^2_{L^2_x(\R^3)}}{\left(A^2+\|\nabla v^n\|^2_{L^2_x(\R^3)}+\|\nabla H^n\|^2_{L^2_x(\R^3)}\right)^2}\ \mathrm{d}t\nonumber\\
&\quad \lesssim \int_{t_0}^1 A^2+\|\nabla v^n\|^2_{L^2_x(\R^3)}+\|\nabla H^n\|^2_{L^2_x(\R^3)}\ \mathrm{d}t+\frac{\|\nabla v^n(t_0)\|^2_{L^2_x(\R^3)}+\|\nabla H^n(t_0)\|^2_{L^2_x(\R^3)}}{A^4}\lesssim A^4.
\end{align*}
which is the desired estimate \eqref{vnHnlapu-2}.

Secondly, if $I=[t_1,t_2]\subset\subset [0,1]$, we can invoke the scaling and translation argument to derive the desired result. In fact, we take
\EQS{\label{scaling-01}
v^\lambda(t,x) \triangleq \lambda v(\lambda^2 t+t_1, \lambda x),\quad H^\lambda(t,x) \triangleq \lambda H(\lambda^2 t+t_1, \lambda x),
}
with $\lambda=\sqrt{t_2-t_1}$, then $(v^\lambda,H^\lambda)$ is also a classical solution of \eqref{mhdeq} in $\left[-\frac{1+t_1}{t_2-t_1},\frac{1-t_1}{t_2-t_1}\right]\times \R^3$ with
$[-1,1]\subset \left[-\frac{1+t_1}{t_2-t_1},\frac{1-t_1}{t_2-t_1}\right]$. Notices that $\|(v^\lambda,H^\lambda)\|_{L^\infty_t L^3_x}=\|(v,H)\|_{L^\infty_t L^3_x}\leq A$, then by \eqref{VHL1L}, we have
$$
\| (v^\lambda,H^\lambda) \|_{L^1_t L^\infty_x([0,1] \times \R^3)}\lesssim A^4,
$$
from which one has
\begin{align}\label{VHL1I.est}
\|(v,H)
\|_{L^1_t L^\infty_x(I \times \R^3)}=\sqrt{t_2-t_1}\|(v^\lambda,H^\lambda) \|_{L^1_t L^\infty_x([0,1] \times \R^3)}\lesssim A^4|I|^{\frac12},
\end{align}
for any interval $I\subset[0,1]$.
\end{proof}

\subsection{Epochs of estimation and annuli of estimation for $v$ and $H$}\label{4.epoch}
The role of this section in the paper is crucial, particularly in the estimation of the upper bound for $N_0$ using quantitative versions of the Carleman inequality. Here, the standard argument, such as the bootstrap
technique, is employed.
\begin{prop}\label{iii}
Let $v,\pi,H,A$ obey the hypotheses of Proposition \ref{main-est}. For any interval $I\subset[0, 1]$, there is a subinterval $I' \subset I$ with $|I'| \gtrsim A^{-8} |I|$ such that
$$ \| \nabla^j (v,H)\|_{L^\infty_t L^\infty_x(I' \times \R^3)} \lesssim A^{35} |I|^{-\frac{j+1}{2}}\quad\text{for}\quad j=0,1,2.$$
\end{prop}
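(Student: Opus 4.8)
The plan is to combine the bounded total speed estimate (Proposition \ref{ii}) with a pigeonhole in time and a standard local‑theory bootstrap. First I would reduce to the case $I=[0,1]$, exactly as in the proof of Proposition \ref{ii}: after replacing $(v,H)$ by the rescaled solution $v^{\lambda}(t,x)\triangleq\lambda v(\lambda^{2}t+t_{1},\lambda x)$, $H^{\lambda}(t,x)\triangleq\lambda H(\lambda^{2}t+t_{1},\lambda x)$ with $\lambda=\sqrt{|I|}$, which preserves the hypothesis \eqref{able}, it suffices to produce a subinterval $I'\subset[0,1]$ with $|I'|\gtrsim A^{-8}$ on which $\|\nabla^{j}(v,H)\|_{L^{\infty}_{t}L^{\infty}_{x}(I'\times\R^{3})}\lesssim A^{35}$ for $j=0,1,2$; undoing the scaling then multiplies these bounds by the dimensional factors $|I|^{-(j+1)/2}$ and rescales $I'$ back to a subinterval of $I$ of length $\gtrsim A^{-8}|I|$.

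Applying Proposition \ref{ii} with $I=[0,1]$ gives $\|(v,H)\|_{L^{1}_{t}L^{\infty}_{x}([0,1]\times\R^{3})}\lesssim A^{4}$. Put $M\triangleq C_{1}A^{4}$ with $C_{1}$ a large absolute constant, and $\delta\triangleq c_{0}M^{-2}\sim A^{-8}$ with $c_{0}$ a small absolute constant to be chosen. By Chebyshev's inequality the set $\{t\in[0,1]:\|(v,H)(t)\|_{L^{\infty}_{x}(\R^{3})}>M\}$ has measure $\lesssim A^{4}/M<1/4$ once $C_{1}$ is large, and the set of $t_{*}$ with $[t_{*},t_{*}+\delta]\not\subset[0,1]$ has measure $\delta\ll1$, so there exists $t_{*}\in[0,1-\delta]$ with $\|(v,H)(t_{*})\|_{L^{\infty}_{x}(\R^{3})}\leq M$. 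I claim $I'\triangleq[t_{*}+\delta/2,\,t_{*}+\delta]$ works.

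The main step is a continuity argument for the Duhamel formulation on $[t_{*},t_{*}+\delta]$. Using $\div v=\div H=0$ to put the nonlinearities in divergence form,
\begin{align*}
v(t)&=e^{(t-t_{*})\Delta}v(t_{*})+\int_{t_{*}}^{t}e^{(t-s)\Delta}\,\mathbb{P}\,\nabla\cdot\bke{H\otimes H-v\otimes v}(s)\ \mathrm{d}s,\\
H(t)&=e^{(t-t_{*})\Delta}H(t_{*})+\int_{t_{*}}^{t}e^{(t-s)\Delta}\,\nabla\cdot\bke{v\otimes H-H\otimes v}(s)\ \mathrm{d}s,
\end{align*}
and invoking the heat smoothing bounds in dyadic form \eqref{bern-2}, \eqref{bern-3} (which, via a Littlewood–Paley decomposition and Lemma \ref{mult}, also control $e^{\tau\Delta}\mathbb{P}\nabla\cdot$ on $L^{\infty}$, despite $\mathbb{P}$ itself being unbounded there, since on each block this is a multiplier of size $O(N)$ with Gaussian decay), one obtains
$$\|(v,H)(t)\|_{L^{\infty}_{x}}\leq M+C\int_{t_{*}}^{t}(t-s)^{-1/2}\|(v,H)(s)\|_{L^{\infty}_{x}}^{2}\ \mathrm{d}s.$$
A standard bootstrap then gives $\|(v,H)(t)\|_{L^{\infty}_{x}}\leq2M$ throughout $[t_{*},t_{*}+\delta]$ once $c_{0}$ is small enough. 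A second bootstrap on the quantities $\sup_{t_{*}<t\leq t_{*}+\delta}(t-t_{*})^{j/2}\|\nabla^{j}(v,H)(t)\|_{L^{\infty}_{x}}$ for $j=1,2$ — which cannot be closed by the crude bound $\|\nabla^{j}e^{\tau\Delta}\nabla\cdot F\|_{L^{\infty}}\lesssim\tau^{-(j+1)/2}\|F\|_{L^{\infty}}$ because of the non‑integrable time singularity at $s=t$, but is closed by moving some derivatives onto the factor $F$ using the bound just obtained — yields $\|\nabla^{j}(v,H)(t)\|_{L^{\infty}_{x}}\lesssim M(t-t_{*})^{-j/2}$. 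Restricting to $I'$ and using $t-t_{*}\geq\delta/2\sim A^{-8}$ and $M\sim A^{4}$ gives $\|\nabla^{j}(v,H)\|_{L^{\infty}_{t}L^{\infty}_{x}(I'\times\R^{3})}\lesssim A^{4+4j}\leq A^{35}$ for $j=0,1,2$, with $|I'|=\delta/2\gtrsim A^{-8}$; scaling back completes the proof.

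\textbf{Main obstacle.} The only genuine work is this $L^{\infty}$ bootstrap, and the two points needing care are (i) handling the pressure / Leray projection on $L^{\infty}$ via Littlewood–Paley and Lemma \ref{mult}, and (ii) the secondary derivative bootstrap with its borderline time singularity. An equivalent route, parallel to the last lines of the proof of Theorem \ref{main-1}, is to start instead from the $L^{2}_{t}L^{2}_{x}$ bound on $\nabla^{2}(v^{n},H^{n})$ coming from \eqref{122} and run a parabolic‑regularity iteration, repeatedly upgrading the integrability of $(v,H)$ and passing to slightly smaller subintervals by pigeonhole until $L^{\infty}_{t}L^{\infty}_{x}$ control of $v,\nabla v,\nabla^{2}v,H,\nabla H,\nabla^{2}H$ is reached; either way the stated loss $A^{35}$ is obtained with a large margin.
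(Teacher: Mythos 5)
Your argument is correct, but it takes a genuinely different route from the paper's. The paper never invokes Proposition \ref{ii} here: it works with the nonlinear enstrophy $\mathcal{E}(t)=\tfrac12\|\nabla(v^{n},H^{n})(t)\|_{L^2}^2$, locates a good time $t_1$ with $\mathcal{E}(t_1)\lesssim A^4$ by pigeonholing \eqref{122}, propagates $\mathcal{E}\lesssim A^4$ over an interval of length $cA^{-8}$ by a Gr\"onwall/continuity argument, and then climbs from the resulting subcritical $L^4_tL^\infty_x$ and $L^2_tL^6_x$ bounds to $L^\infty_tL^\infty_x$ control of $v,\nabla v,\nabla^2 v$ through a long chain of Duhamel iterations, maximal parabolic regularity for the cut-off vorticity system, and the Biot-Savart law; this chain is where the exponent $35$ comes from. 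You instead pigeonhole the $L^1_tL^\infty_x$ bound of Proposition \ref{ii} to find a time where $\|(v,H)\|_{L^\infty_x}\lesssim A^4$, and then run a mild-solution bootstrap in weighted $L^\infty$ norms. This is legitimate (Proposition \ref{ii} precedes Proposition \ref{iii} and its proof does not rely on it), noticeably shorter, and yields the sharper exponents $A^{4+4j}\le A^{35}$; the trade-off is that it leans entirely on Proposition \ref{ii}, whose proof is itself an energy argument of the same flavor as the paper's.

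Two details of your sketch deserve attention. First, your handling of $e^{\tau\Delta}\mathbb{P}\nabla\cdot$ on $L^\infty$ via Littlewood-Paley blocks and Lemma \ref{mult} (summing $Ne^{-N^2\tau/20}$ over dyadic $N$ to get $\tau^{-1/2}$) is exactly right. Second, for $j=2$ the phrase ``moving some derivatives onto $F$'' is under-specified: with $\|\nabla^2F(s)\|_{L^\infty}\sim (s-t_*)^{-1}$ the integral $\int_{t_*}^{t}(t-s)^{-1/2}(s-t_*)^{-1}\,\mathrm{d}s$ still diverges at the lower endpoint, so you must in addition split the Duhamel integral at $s=(t+t_*)/2$, using the full smoothing bound $(t-s)^{-3/2}\|F\|_{L^\infty}$ on $[t_*,(t+t_*)/2]$ and the derivative-shifted bound on the second half (or, equivalently, restart the Duhamel formula at an intermediate time). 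This is a routine device and the absorption then closes exactly as you indicate, so I regard it as an omitted standard step rather than a gap.
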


\begin{proof}
Firstly, we assume $I=[0,1]$.
Taking the gradient of \eqref{Vi}, \eqref{Hi},
then taking the inner product with $\nabla v^{n}$ and $\nabla H^{n}$ respectively,
 we conclude that
\begin{align*}
&\frac{\mathrm{d}}{\mathrm{d}t}\Big[\frac{1}{2}\int_{\R^3} (|\nabla v^{n}|^2+\int_{\R^3} |\nabla H^{n}|^2)\ \mathrm{d}x\Big]
+\int_{\R^3} (|\nabla^2 v^{n}|^2+|\nabla^2 H^{n}|^2)\ \mathrm{d}x\\
&~~=\int_{\R^3} \mathrm{div}(v \otimes v-H\otimes H)\cdot \Delta v^n\ \mathrm{d}x+\int_{\R^3} \Big[(v \cdot\nabla) H-(H\cdot\nabla) v\Big]\cdot \Delta H^n\ \mathrm{d}x\triangleq I_8+I_9.
\end{align*}
The estimates for $I_8$ as follows
\begin{align}\label{I3}
I_8=&\int_{\R^3} \mathrm{div}(v \otimes v-H\otimes H)\cdot \Delta v^n\ \mathrm{d}x\nonumber\\
\leq&\frac{1}{2} \|\nabla^2 v^{n}\|^2_{L^{2}_x(\R^3)}+
C\|\mathrm{div}(v \otimes v)\|^2_{L^{2}_x(\R^3)}+C\|\mathrm{div}(H \otimes H)\|^2_{L^{2}_x(\R^3)}
\nonumber\\
\leq&\frac{1}{2} \|\nabla^2 v^{n}\|^2_{L^{2}_x(\R^3)}+
C\|v\|^2_{L^{6}_x(\R^3)}\|\nabla v\|^2_{L^{3}_x(\R^3)}
+C\|H\|^2_{L^{6}_x(\R^3)}\|\nabla H\|^2_{L^{3}_x(\R^3)}
\nonumber\\
\leq&\frac{1}{2} \|\nabla^2 v^{n}\|^2_{L^{2}_x(\R^3)}+C\Big(A^2+\|v^n\|^2_{L^{6}_x(\R^3)}\Big)
\Big(A^2+\|\nabla v^n\|^2_{L^{3}_x(\R^3)}\Big)\nonumber\\
&\quad+C\Big(A^2+\|H^n\|^2_{L^{6}_x(\R^3)}\Big)
\Big(A^2+\|\nabla H^n\|^2_{L^{3}_x(\R^3)}\Big)\nonumber\\
\leq&\frac{1}{2} \|\nabla^2 v^{n}\|^2_{L^{2}_x(\R^3)}+C\Big(A^2+E_1(t)\Big)
\Big(A^2+E_1^{\frac{1}{2}}(t)\|\nabla^2 v^n\|_{L^{2}_x(\R^3)}\Big)\nonumber\\
&+C\Big(A^2+E_2(t)\Big)
\Big(A^2+E_2^{\frac{1}{2}}(t)\|\nabla^2 H^n\|_{L^{2}_x(\R^3)}\Big)\nonumber\\
\leq&\frac{5}{8} \|\nabla^2 v^{n}\|^2_{L^{2}_x(\R^3)}+\frac{1}{8}\|\nabla^2 H^{n}\|^2_{L^{2}_x(\R^3)}+C\Big[A^4+A^4 \mathcal{E}(t)+ \mathcal{E}(t)^3
\Big],
\end{align}
where
$$ \mathcal{E}(t)\triangleq E_1(t)+E_2(t)= \frac{1}{2} \int_{\R^3} |\nabla v^{n}(t,x)|^2+ |\nabla H^{n}(t,x)|^2\ \mathrm{d}x, \quad t \in [0,1].$$
Similarly, we also have
\begin{align}\label{I4}
I_9
\leq&\frac{5}{8} \|\nabla^2 H^{n}\|^2_{L^{2}_x(\R^3)}+\frac{1}{8} \|\nabla^2 v^{n}\|^2_{L^{2}_x(\R^3)}+C\Big[A^4+A^4\mathcal{E}(t)+\mathcal{E}(t)^3
\Big]
\end{align}
From \eqref{I3} and \eqref{I4}, for any $t \in [0,1]$ we have
\begin{align}\label{eat}
\frac{\mathrm{d}}{\mathrm{d}t}\mathcal{E}(t)+\Big(\|\nabla^2 v^{n}\|^2_{L^{2}_x(\R^3)}+\|\nabla^2 H^{n}\|^2_{L^{2}_x(\R^3)}\Big)
\leq C\Big[A^4+A^4\mathcal{E}(t)+\mathcal{E}(t)^3\Big].
\end{align}
On the other hand, by the pigeonhole principle and \eqref{122}, we can find a time $t_1 \in [0,\frac{1}{2}]$ such that $\mathcal{E}(t_1)  \lesssim A^4.$  Now
we define
$$t^\ast=\sup\{~t~|\sup_{t_1\leq s\leq t}|\mathcal{E}(s)|\leq CA^4\},$$
then  from \eqref{eat}, we know that for any $t\in [t_1,t^\ast]$
\begin{align*}
\mathcal{E}(t) &\leq \mathcal{E}(t_1)+C\int_{t_1}^{t^\ast}A^4+A^4\mathcal{E}(t)+{\mathcal{E}(t)}^2\ \mathrm{d}t\\
&\leq A^4+CA^{12}(t^\ast-t_1)\leq CA^{4},
\end{align*}
from which we can show that $t^\ast=t_1+cA^{-8}$ and $c>0$ is a small absolute constant.
Thus,
\begin{align}\label{EtCA}
\mathcal{E}(t)\leq CA^{4}\quad\text{for}\quad t \in [t_1, t_1 + c A^{-8}].
\end{align}
For simplicity, we set $\tau(s) \triangleq t_1 + scA^{-8}$, inserting \eqref{EtCA} back into \eqref{eat} one has
$$ \frac{\mathrm{d}}{\mathrm{d}t}\mathcal{E}(t)+ \Big(\|\nabla^2 v^{n}\|^2_{L^{2}_x(\R^3)}+\|\nabla^2 H^{n}\|^2_{L^{2}_x(\R^3)}\Big)\lesssim A^{12}\quad\text{for}\quad t \in [\tau(0), \tau(1)],$$
and hence by the fundamental theorem of calculus
\begin{equation}\label{nog}
 \int_{\tau(0)}^{\tau(1)} \int_{\R^3} (|\nabla^2 v^{n}|^2+|\nabla^2 H^{n}|^2)\ \mathrm{d}x \mathrm{d}t \lesssim A^4.
\end{equation}
 This, along with Gagliardo-Nirenberg inequality
\begin{align*}
 \| v^{n} \|_{L^\infty_x(\R^3)} \lesssim \| \nabla v^{n} \|_{L^2_x(\R^3)}^{\frac{1}{2}} \| \nabla^2 v^{n} \|_{L^2_x(\R^3)}^{\frac{1}{2}},\quad\| H^{n} \|_{L^\infty_x(\R^3)} \lesssim \| \nabla H^{n} \|_{L^2_x(\R^3)}^{\frac{1}{2}} \| \nabla^2 H^{n} \|_{L^2_x(\R^3)}^{\frac{1}{2}},
 \end{align*}
 the H\"older's inequality, \eqref{EtCA}, yields
$$ \| (v^{n},H^{n}) \|_{L^4_t L^\infty_x([\tau(0), \tau(1)] \times \R^3)} \lesssim A^2.$$
Hence by \eqref{very}, we have
\begin{equation}\label{fract}
 \|(v,H)\|_{L^4_t L^\infty_x([\tau(0), \tau(1)] \times \R^3)} \lesssim A^2.
\end{equation}
Furthermore, from Sobolev embedding and \eqref{nog} one has
$$ \| (\nabla v^{n},\nabla H^{n}) \|_{L^2_t L^6_x([\tau(0), \tau(1)] \times \R^3)}\lesssim
A^2,
$$
which, along with \eqref{very} yields that
\begin{equation}\label{nob}
 \| (\nabla v,\nabla H)\|_{L^2_t L^6_x([\tau(0), \tau(1)] \times \R^3)} \lesssim A^2.
\end{equation}
These are  subcritical regularity estimates which  can be further improved through iterative methods.\\
{\bf Step 1}. {\em "Bootstrap" estimate \eqref{fract}:  from $L^4_t L^\infty_x$ to $L^\infty_t L^\infty_x$.}
In fact, from \eqref{able} that
\begin{align*}
\|(v,H)(t) \|_{L^\infty_x(\R^3)} &\lesssim  \| e^{(t-\tau(0)) \Delta}(v,H)(\tau(0))\|_{L^\infty_x(\R^3)}\nonumber\\
&\quad+\int_{\tau(0)}^t\|e^{(t-t')\Delta} \{ \mathbb{P}\mathrm{div}(H \otimes H-v \otimes v),\nabla\times(v \times H)\}(t')\|_{L^\infty_x(\R^3)}\ \mathrm{d}t'\nonumber\\
&\lesssim A(t-\tau(0))^{-\frac{1}{2}}
+ \int_{\tau(0)}^t (t-t')^{-\frac{1}{2}} (\| v(t) \|_{L^\infty_x(\R^3)}^2+\| H(t) \|_{L^\infty_x(\R^3)}^2)\ \mathrm{d}t'.
\end{align*}
This, together with \eqref{fract} and Hardy-Littlewood-Sobolev inequality inequality, yields
\begin{align}\label{vHL8-1}
 \| (v,H) \|_{L^8_t L^\infty_x([\tau(0.1), \tau(1)] \times \R^3)} \lesssim A^{4}.
\end{align}
Repeating the above argument, we now also see for $t \in [\tau(0.2), \tau(1)]$ that
\begin{align*}
\|(v,H)(t) \|_{L^\infty_x(\R^3)}\lesssim A(t-\tau(0.1))^{-\frac{1}{2}}
+ \int_{\tau(0.1)}^t (t-t')^{-\frac{1}{2}} (\| v(t) \|_{L^\infty_x(\R^3)}^2+\| H(t) \|_{L^\infty_x(\R^3)}^2)\ \mathrm{d}t',
\end{align*}
which, along with \eqref{vHL8-1} and  the H\"older's inequality, leads to
\begin{equation}\label{leo}
 \| (v,H) \|_{L^\infty_t L^\infty_x([\tau(0.2), \tau(1)] \times \R^3)} \lesssim A^{8}.
\end{equation}
{\bf Step 2}.{\em "Bootstrap" estimate \eqref{nob}:  from $L^2_t L^6_x$ to $L^\infty_t L^\infty_x$.} Notices that
 for $t \in [\tau(0.3), \tau(1)]$  one has
\begin{align}\label{ler-1}
 (\nabla v,\nabla H)(t) &= e^{(t-\tau(0.2)) \Delta}\nabla (v,H)(\tau(0.2))\nonumber\\
 &\quad+ \int_{\tau(0.2)}^t e^{(t-t')\Delta}\nabla \{\mathbb{P} \mathrm{div}(H \otimes H-v \otimes v),\nabla\times(v \times H)\}(t')\ \mathrm{d}t'.
\end{align}
  From \eqref{able} and \eqref{bern-3}, we have
\begin{align*}
\|(\nabla v,\nabla H)(t) \|_{L^\infty_x(\R^3)} &\lesssim A(t-\tau(0.2))^{-1}\nonumber\\
&\quad+ \int_{\tau(0.2)}^t (t-t')^{-\frac{3}{4}} \Big(\| \mathrm{div}(v \otimes v) \|_{L^6_x(\R^3)}+\| \mathrm{div}(H \otimes H) \|_{L^6_x(\R^3)}\Big)\ \mathrm{d}t'\nonumber\\
&\quad
+ \int_{\tau(0.2)}^t (t-t')^{-\frac{3}{4}} \Big(\|v \cdot \nabla H \|_{L^6_x(\R^3)}+\| H \cdot \nabla v\|_{L^6_x(\R^3)}\Big)\ \mathrm{d}t',
\end{align*}
where from \eqref{nob}, \eqref{leo} and H\"older's inequality one has
\begin{align*}
&\Big\|\{\div (v \otimes v),\div (H\otimes H), v\cdot\nabla H, H\cdot\nabla v\}\Big\|_{L^2_t L^6_x([\tau(0.2), \tau(1)] \times \R^3)}
\lesssim A^{10},
\end{align*}
Thus, combining with \eqref{ler-1} and the Hardy-Littlewood-Sobolev inequality, it is concluded that
\begin{equation}\label{v4wuH4wu}
\| (\nabla v, \nabla H )\|_{L^4_t L^\infty_x([\tau(0.3), \tau(1)] \times \R^3)} \lesssim A^{10}.
\end{equation}
Therefore, from \eqref{leo}, \eqref{v4wuH4wu} and H\"older's inequality, we can get
\begin{align}\label{vvHH-1}
\Big\|\{\div (v \otimes v),\div (H\otimes H), v\cdot\nabla H, H\cdot\nabla v\}\Big\|_{L^4_t L^\infty_x([\tau(0.3), \tau(1)] \times \R^3)}
\lesssim A^{18}.
\end{align}
Furthermore, for any $t \in [\tau(0.4), \tau(1)]$ we can obtain
\begin{align*}
\|(\nabla v,\nabla H)(t) \|_{L^\infty_x(\R^3)}&\lesssim A(t-\tau(0.3))^{-1}\\
&\quad+\int_{\tau(0.3)}^t (t-t')^{-\frac{1}{2}} \Big(\| \mathrm{div}(v \otimes v) \|_{L^\infty_x(\R^3)}+\| \mathrm{div}(H \otimes H) \|_{L^\infty_x(\R^3)}\Big)\ \mathrm{d}t'\\
&\quad+ \int_{\tau(0.3)}^t (t-t')^{-\frac{1}{2}} \Big(\| v \cdot\nabla H) \|_{L^\infty_x(\R^3)}+\|H \cdot\nabla v\|_{L^\infty_x(\R^3)}\Big)\ \mathrm{d}t',
\end{align*}
 and hence by \eqref{vvHH-1} we have
\begin{align}\label{wJwu}
\| (\nabla v ,\nabla H)\|_{L^\infty_t L^\infty_x([\tau(0.4), \tau(1)] \times \R^3)} \lesssim A^{18}.
\end{align}

In order to estimates  $ \nabla^2 v $ and $\nabla^2 H$, we
introduce a cut-off function $\Xi(t)\in C_c^{\infty}([0,\tau(1)])$ such that $0\leq\Xi\leq1$ and
\begin{align*}
\Xi(t)=\begin{cases}
1, & t \in[\tau(0.5),\tau(1)],\\
\mathrm{smooth}, & t\in(\tau(0.4),\tau(0.5)),\\
0, & t\in [0,\tau(0.4)].
\end{cases}
\end{align*}
Thus we set $S(t,x)\triangleq\Xi(t)\omega(t,x),~M(t,x)\triangleq\Xi(t)J(t,x)$ which, respectively, fulfill in $(\tau(0.4),\tau(1))\times\R^3$ 
\EQs{
\left\{
\begin{array}{ll}
\partial_{t}S-\Delta S=\omega\partial_t\Xi
+(S\cdot\nabla)v+(H\cdot\nabla)M-(v\cdot\nabla)S
+(M\cdot\nabla)H,\\
S(\tau(0.4),x)=0,
\end{array}
\right.
}
and
\quad\EQs{
\left\{
\begin{array}{ll}
\partial_{t}M-\Delta M=J\partial_t\Xi
+(H\cdot\nabla)S+(M\cdot\nabla)v-(v\cdot\nabla)M
+(S\cdot\nabla)H+2\Xi R(v,H),\\
M(\tau(0.4),x)=0.
\end{array}
\right.
}
By using the maximal regularity Theorem of parabolic equations (see the page 117 of \cite{MR3616490}), we can obtain
\begin{align}\label{2w26}
&\|\partial_tS \|_{L^2_t L^6_x([\tau(0.4), \tau(1)] \times \R^3)}+\|\nabla^2S\|_{L^2_t L^6_x([\tau(0.4), \tau(1)] \times \R^3)}\nonumber\\
&\lesssim
A^{8}\Big(\|\nabla M \|_{L^2_t L^6_x([\tau(0.4), \tau(1)] \times \R^3)}+\|\nabla S \|_{L^2_t L^6_x([\tau(0.4), \tau(1)] \times \R^3)}\Big)+A^{20}\nonumber\\
&\lesssim
A^{8}\Big[\Big(\int_{\tau(0.4)}^{\tau(1)}\|M \|^{\frac43}_{L^\infty_x(\R^3)}\|\nabla^2 M \|^{\frac23}_{L^6_x(\R^3)}\ \mathrm{d}t\Big)^{\frac12}+\Big(\int_{\tau(0.4)}^{\tau(1)}\|S \|^{\frac43}_{L^\infty_x(\R^3)}\|\nabla^2 S \|^{\frac23}_{L^6_x(\R^3)}\ \mathrm{d}t\Big)^{\frac12}\Big]+A^{20}\nonumber\\
&\lesssim
A^{8}(\tau(1)-\tau(0.4))^{\frac13}\Big[\|J\|^{\frac23}_{L^\infty_t L^\infty_x([\tau(0.4), \tau(1)] \times \R^3)}\|\nabla^2 M \|^{\frac{1}{3}}_{L^2_t L^6_x([\tau(0.4), \tau(1)] \times \R^3)}\nonumber\\
&\quad+
\|\omega\|^{\frac23}_{L^\infty_t L^\infty_x([\tau(0.4), \tau(1)] \times \R^3)}\|\nabla^2 S \|^{\frac{1}{3}}_{L^2_t L^6_x([\tau(0.4), \tau(1)] \times \R^3)}\Big]\nonumber\\
&\lesssim A^{18}\Big(\|\nabla^2 M \|^{\frac{1}{3}}_{L^2_t L^6_x([\tau(0.4), \tau(1)] \times \R^3)}+\|\nabla^2 S \|^{\frac{1}{3}}_{L^2_t L^6_x([\tau(0.4), \tau(1)] \times \R^3)}\Big)+A^{20}\nonumber\\
&\leq \frac{1}{3}\Big(\|\nabla^2 M \|_{L^2_t L^6_x([\tau(0.4), \tau(1)] \times \R^3)}+\|\nabla^2 S \|_{L^2_t L^6_x([\tau(0.4), \tau(1)] \times \R^3)}\Big)+CA^{27}.
\end{align}
Similarly, we have
\begin{align*}
&\|\partial_t M\|_{L^2_t L^6_x([\tau(0.4), \tau(1)] \times \R^3)}+\|\nabla^2 M\|_{L^2_t L^6_x([\tau(0.4), \tau(1)] \times \R^3)}\nonumber\\
&\quad\leq \frac{1}{3}\Big(\|\nabla^2 M \|_{L^2_t L^6_x([\tau(0.4), \tau(1)] \times \R^3)}+\|\nabla^2 S \|_{L^2_t L^6_x([\tau(0.4), \tau(1)] \times \R^3)}\Big)+CA^{27},
\end{align*}
which combining with \eqref{2w26}, we have
\begin{align*}
\|\nabla^2 \omega\|_{L^2_t L^6_x([\tau(0.5), \tau(1)] \times \R^3)}+\|\nabla^2 J\|_{L^2_t L^6_x([\tau(0.5), \tau(1)] \times \R^3)}\lesssim A^{27},
\end{align*}
which, along with the Biot-Savart law and the Calder\'{o}n-Zygmund estimates (see the page 380 of \cite{MR3616490}), yields
\begin{align}\label{2w262J26}
\|(\nabla^2 u, \nabla^2 H)\|_{L^2_t L^6_x([\tau(0.5), \tau(1)] \times \R^3)}&=\|(\nabla^2 (-\Delta)^{-1}\nabla\times(\omega, J)\|_{L^2_t L^6_x([\tau(0.5), \tau(1)] \times \R^3)}\nonumber\\
&\lesssim \Big(\int_{\tau(0.5)}^{\tau(1)}\|\nabla(\omega, J)\|^2_{L^6_x(\R^3)}\ \mathrm{d}t\Big)^{\frac12}\nonumber\\
&\lesssim \Big(\int_{\tau(0.5)}^{\tau(1)}\|(\omega, J)\|^{\frac43}_{L^\infty_x(\R^3)}\|\nabla^2(\omega, J)\|^{\frac23}_{L^6_x(\R^3)}\ \mathrm{d}t\Big)^{\frac12}\nonumber\\
&\lesssim A^{19},
\end{align}
Now, taking the gradient of $\eqref{ler-1}$,
 we use Duhamel's principle again for $t \in [\tau(0.6), \tau(1)]$ that
\begin{equation*}
\begin{aligned}
 \|(\nabla^2 v,\nabla^2 H)(t)\|_{L^\infty_x(\R^3)}\lesssim
 A^{18}(t-\tau(0.5))^{-\frac{1}{2}} +\int_{\tau(0.5)}^t (t-t')^{-\frac{3}{4}}(F,G)(t')\ \mathrm{d}t',
 \end{aligned}
\end{equation*}
where
\begin{equation*}\label{ler}
\begin{aligned}
&F(t)=\|\partial_kH_i\partial_iH_j+H_i\partial_{ik}H_j-
\partial_kv_i\partial_iv_j-v_i\partial_{ik}v_j\|_{L^6_x{(\R^3)}},\\
&G(t)=\|\partial_kH_i\partial_iv_j+H_i\partial_{ik}v_j-
\partial_kv_i\partial_iH_j-v_i\partial_{ik}H_j\|_{L^6_x{(\R^3)}}.
\end{aligned}
\end{equation*}
By estimates  \eqref{leo}, \eqref{wJwu} and \eqref{2w262J26}, we have
\begin{align*}
&\|\partial_kH_i\partial_iH_j+H_i\partial_{ik}H_j-
\partial_kv_i\partial_iv_j-v_i\partial_{ik}v_j\|
_{L^2_t L^6_x([\tau(0.5), \tau(1)] \times \R^3)} \lesssim A^{27},\\
&\|\partial_kH_i\partial_iv_j+H_i\partial_{ik}v_j-
\partial_kv_i\partial_iH_j-v_i\partial_{ik}H_j\|_{L^2_t L^6_x([\tau(0.5), \tau(1)] \times \R^3)} \lesssim A^{27}.
\end{align*}
Repeat the proof process in \eqref{v4wuH4wu}, we have
\begin{equation*}
\begin{aligned}
 \|(\nabla^2 v,\nabla^2 H)(t)\|_{L^4_t L^\infty_x([\tau(0.6), \tau(1)] \times \R^3)}\lesssim A^{27},
 \end{aligned}
\end{equation*}
and then
$$\|(\nabla^2 v,\nabla^2 H)(t)\|_{L^\infty_t L^{\infty}_x([\tau(0.7), \tau(1)] \times \R^3)}\lesssim A^{35}.
$$
Finally, if $I=[\widetilde{t}_1,\widetilde{t}_2]\subset\subset [0,1]$, we can borrow the scaling and translation transformation \eqref{scaling-01} and follow almost verbatim the proof of \eqref{VHL1I.est}  to derive the desired result. Here, we omit its details.
\end{proof}

\medskip

 To invoke  the Lemma \ref{carl-second}, we had to prove the annuli of estimates for $v$ and $H$. It is worth noting that, compared to the approach using covering theories in Tao \cite{MR4337421}, we, as in \cite{MR4278282},  provided a simple and direct proof from a new perspective by using the $\varepsilon$-regularity method.
\begin{prop}\label{vi}
Let $(v,H,p): [-1,1] \times \R^3 \to \R^3\times \R^3\times\R$ be a classical solution of \eqref{mhdeq} satisfying \eqref{able}. There exist a absolute constant $\varepsilon'_\ast\in(0,\frac14]$ such that if $0 < T' \leq \frac1{64}$, $R_0 \geq 1$ and
$$\mu'=-\frac{\log\varepsilon'_\ast}{\log A}.$$
Then there exists a scale
$$ 16R_0\sqrt{T'} \leq \widetilde{R} \leq 16e^{2\mu' A^{\mu'+2}} R_0\sqrt{T'},$$
such that on the region
$$ \Omega := \{ (t,x) \in [-T',0] \times \R^3: \widetilde{R} \leq |x| \leq \frac{A^{\mu'}}{4} \widetilde{R} \},$$
we have
$$\| \nabla^j (v,H) \|_{L^\infty_t L^\infty_x(\Omega)} \lesssim A^{-\frac{\mu'}{3}} (T')^{-\frac{j+1}{2}}\quad\text{for}\quad j=0,1,2.$$

\end{prop}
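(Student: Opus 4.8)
The plan is to prove Proposition \ref{vi} via the $\varepsilon$-regularity route (as in Barker--Prange \cite{MR4278282}) rather than Tao's covering argument: one upgrades the global scale-invariant bound \eqref{able} to interior regularity on a well-chosen annulus by applying Corollary \ref{cor1.fulu}. It helps to record that $A^{\mu'}=1/\varepsilon'_\ast$, so the target $A^{-\mu'/3}(T')^{-(j+1)/2}$ is really $(\varepsilon'_\ast)^{1/3}(T')^{-(j+1)/2}$, the annulus $\Omega$ has fixed logarithmic width, and the admissible scale window $e^{2\mu'A^{\mu'+2}}=\exp\!\big(2A^2\log(1/\varepsilon'_\ast)/(\varepsilon'_\ast\log A)\big)$ is exponential in $A^2/\log A$. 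First I would normalise by the parabolic scaling of \eqref{mhdeq}: with $\lambda=\sqrt{T'}$ one replaces $(v,H,\pi)$ by $\big(\lambda v(\lambda^2\cdot,\lambda\cdot),\lambda H(\lambda^2\cdot,\lambda\cdot),\lambda^2\pi(\lambda^2\cdot,\lambda\cdot)\big)$, which preserves \eqref{able}, turns $[-T',0]$ into $[-1,0]$, enlarges the time domain to $[-1/T',1/T']\supset[-2,2]$ (using $T'\le\frac1{64}$), and turns the sought scale window into $[16R_0,16e^{2\mu'A^{\mu'+2}}R_0]$; the $(T')$-powers in the conclusion are recovered upon undoing the scaling.

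Next I would use \eqref{able} scale-by-scale. Since $\int_{\R^3}(|v(t)|^3+|H(t)|^3)\lesssim A^3$ for every $t$, summing over the disjoint dyadic shells $\{2^k\le|x|\le 2^{k+1}\}$ and integrating over $[-2,0]$ gives a total $\lesssim A^3$. Discarding the $O(\log A)$ smallest shells — harmless, as the admissible window has logarithmic length $\sim\mu'A^{\mu'+2}\sim A^2/\log A\gg\log A$ — and pigeonholing over the remaining $\sim A^2/\log A$ shells, one obtains a radius $\widetilde R$, polynomially large in $A$ and in the required window, such that $\int_{[-2,0]\times\{\widetilde R/2\le|x|\le 2\widetilde R/\varepsilon'_\ast\}}(|v|^3+|H|^3)$ is a tiny fraction of the total; by the Calder\'on--Zygmund bound below the same smallness holds for $\int(|v|^3+|H|^3+|\Pi_{\mathrm{near}}|^{3/2})$. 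Because $\widetilde R$ has been forced polynomially large while the natural $\varepsilon$-regularity scale is $O(\sqrt{T'})=O(1)$ here, the exceptional parabolic cylinders inside $\Omega$ are negligible in measure, and — exploiting that $\Omega$ lies far from the origin, so that the $\varepsilon$-regularity cylinder radius may be taken anywhere between $\sim\sqrt{T'}$ and $O(1)$, which only improves the estimate — every point of $\Omega$ lies deep inside a cylinder on which the scale-invariant quantity $r^{-2}\int_{Q_r}(|v|^3+|H|^3+|\Pi|^{3/2})$ is below $\varepsilon'_\ast$. Making this covering step rigorous, so that the merely exponential-in-$A^2/\log A$ window truly suffices to regularise the whole annulus rather than one cylinder, is one of the two delicate points (and may also invoke Propositions \ref{ii}--\ref{iii}).

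The principal obstacle is the pressure, which is nonlocal and so is not controlled on a cylinder by the local $L^3$ norm of $(v,H)$ there alone. Taking the divergence of the momentum equation gives $-\Delta\Pi=\partial_i\partial_j(v_iv_j-H_iH_j)$; I would split $\Pi=\Pi_{\mathrm{near}}+\Pi_{\mathrm{far}}$, with $\Pi_{\mathrm{near}}$ the Newtonian potential of $\partial_i\partial_j\big((v_iv_j-H_iH_j)\chi\big)$ for a cutoff $\chi\equiv1$ on a neighbourhood of the annulus. Then Calder\'on--Zygmund yields $\|\Pi_{\mathrm{near}}(t)\|_{L^{3/2}_x}\lesssim\|v(t)\|_{L^3_x}^2+\|H(t)\|_{L^3_x}^2$, so that $\int|\Pi_{\mathrm{near}}|^{3/2}$ over the fat annulus is dominated by $\int(|v|^3+|H|^3)$ there and hence is small after the pigeonhole; and $\Pi_{\mathrm{far}}$ is harmonic on a neighbourhood of the annulus, so on the relevant cylinders it is estimated by the Newtonian-kernel representation together with interior estimates for harmonic functions, purely in terms of $A$ and negative powers of $\widetilde R$, which are favourable since $\widetilde R\gtrsim1$. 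Combining these, $\int_{Q_1}(|v|^3+|H|^3+|\Pi|^{3/2})<\varepsilon'_\ast$ on each cylinder.

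Finally Corollary \ref{cor1.fulu} applies on each such cylinder — this is exactly where the quantitative $\varepsilon$-regularity for $\nabla^j$, $j=0,1,2$, and hence Lemma \ref{theo.higher}, is used — and yields $\|\nabla^j(v,H)\|_{L^\infty_tL^\infty_x}\lesssim(\varepsilon'_\ast)^{1/3}$ on $\Omega$ in the normalised variables; undoing the $\sqrt{T'}$-scaling restores the factor $(T')^{-(j+1)/2}$ and rewrites $(\varepsilon'_\ast)^{1/3}=A^{-\mu'/3}$, which is the claim. What remains is bookkeeping: absorbing the discarded small scales and the cutoff radii into the constants $16$ and $e^{2\mu'A^{\mu'+2}}$, and checking the time windows remain inside $[-1,1]$. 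In summary, the hard parts are the pressure decomposition and organising the scale pigeonhole so the stated window is large enough.
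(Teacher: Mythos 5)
Your overall strategy (pigeonhole over disjoint annuli $+$ quantitative $\varepsilon$-regularity via Corollary \ref{cor1.fulu} $+$ parabolic rescaling) is exactly the paper's, but two of the steps you flag as ``delicate'' are where the argument as written does not close, and the paper resolves both more simply. First, the pressure: you split $\Pi=\Pi_{\mathrm{near}}+\Pi_{\mathrm{far}}$ and estimate the far (harmonic) part by forcing $\widetilde R$ polynomially large in $A$. This is unnecessary. Since $-\Delta\Pi=\partial_i\partial_j(v_iv_j-H_iH_j)$ globally, Calder\'on--Zygmund gives $\|\Pi(t)\|_{L^{3/2}_x(\R^3)}\lesssim\|v(t)\|_{L^3_x}^2+\|H(t)\|_{L^3_x}^2\lesssim A^2$ for every $t$, so $\int_{-1}^0\int_{\R^3}|\Pi|^{3/2}\lesssim A^3$ and the full quantity $|v|^3+|H|^3+|\Pi|^{3/2}$ can be put into the pigeonhole from the start: summing over \emph{disjoint} shells is automatically controlled by the global integral, and the selected shell has a small pressure contribution with no near/far decomposition and no lower bound on $\widetilde R$ beyond $R_0\geq 1$. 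Relatedly, your remark that the ``exceptional parabolic cylinders inside $\Omega$ are negligible in measure'' is the wrong notion: you need \emph{every} unit cylinder $Q_1(x',0)$ with $x'$ in (a slight shrinking of) the shell to satisfy the $\varepsilon$-regularity hypothesis, which follows immediately once the integral over the whole fattened shell is $\leq\varepsilon'_\ast$; a measure-negligible set of bad cylinders would still destroy the $L^\infty$ conclusion.

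Second, and this is the genuine gap, your pigeonhole does not reach the threshold $\varepsilon'_\ast$ inside the stated window. The total mass is $\sim A^3$, and the number of dyadic shells fitting in a window of logarithmic length $2\mu'A^{\mu'+2}\sim A^2\log(1/\varepsilon'_\ast)/(\varepsilon'_\ast\log A)$ is only $\sim A^2/\log A$; the best shell then carries mass $\sim A\log A$, which is nowhere near the absolute constant $\varepsilon'_\ast$ (``a tiny fraction of the total'' is not tiny in absolute terms). To get a shell with mass $\leq\varepsilon'_\ast=A^{-\mu'}$ you need $\gtrsim A^{3+\mu'}$ disjoint shells, and with dyadic shells that forces a window of ratio $2^{A^{3+\mu'}}$, far exceeding $e^{2\mu'A^{\mu'+2}}$. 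The paper instead uses shells of ratio $A^{\mu'}$ (so each selected shell already has the logarithmic width of $\Omega$, with no secondary covering step) and takes $O(A^{\mu'+1})$ of them, which is what makes the exponent in the window come out as $\mu'A^{\mu'+2}$. If you rework your pigeonhole with shells of ratio $A^{\mu'}$ and count them so that (total mass)/(number of shells) $\leq\varepsilon'_\ast$, and drop the near/far pressure machinery in favor of the global Calder\'on--Zygmund bound, the rest of your outline (apply Corollary \ref{cor1.fulu} on each unit cylinder in the shell, then undo the $\sqrt{T'}$ scaling to recover the $(T')^{-(j+1)/2}$ factors) matches the paper's proof.
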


\begin{proof}
Fix any $R_0\geq 1$,  we can from \eqref{able} and Calder\'{o}n-Zygund inequality obtain
$$\sum_{k=0}^\infty\int_{-1}^0\int\limits_{(A^{\mu'})^{k}R_0\leq|x|
\leq(A^{\mu'})^{k+1}R_0}|v|^3+
|H|^3+|\Pi|^{\frac32}\ \mathrm{d}x\mathrm{d}t\leq A.$$
By the pigeonhole principle, there exists $k_0\in\{0,1,\ldots,\lceil A^{\mu'+1}\rceil\}$ such that
$$\int_{-1}^0\int\limits_{(A^{\mu'})^{k_0}R_0\leq|x|\leq(A^{\mu'})^{k_0+1}R_0}
|v|^3+|H|^3+|\Pi|^{\frac32}\ \mathrm{d}x\mathrm{d}t\leq A^{-\mu'}.$$
Let us define $R'\triangleq A^{\mu' k_0}R_0$, then
$$ R_0 \leq R' \leq e^{2\mu' A^{\mu'+2}} R_0,$$
and
\begin{align*}
\int_{-1}^0\int_{R'\leq|x|\leq A^{\mu'}R'}
|v|^3+|H|^3+|\Pi|^{\frac32}\ \mathrm{d}x\mathrm{d}t\leq A^{-\mu'}.
\end{align*}
which implies
$$
\sup_{x'\in\Gamma} \int_{-1}^0\int_{B_1(x')}
|v|^3+|H|^3+|\Pi|^{\frac32}\ \mathrm{d}x\mathrm{d}t\leq A^{-\mu'}=\varepsilon'_\ast.
$$
with
$$
\Gamma \triangleq \{ x : R'+1 \leq |x| \leq A^{\mu'} R'-1 \}.
$$
Here, $\varepsilon'_\ast$ is defined in Corollary \ref{cor1.fulu}. Then, by applying Corollary \ref{cor1.fulu}, one has for $j=0,1,2$
$$
\sup_{x'\in\Gamma}\|\nabla^j (v,H)\|_{L^\infty(Q_{\frac1{8}}(x',0))}\leq C'_jA^{-\frac{\mu'}{3}}
$$
which also implies
\begin{align}\label{jvjhL}
\|\nabla^j (v,H)\|_{L^\infty(\Gamma\times[-\frac{1}{64},0])}
\leq C'_jA^{-\frac{\mu'}{3}}.
\end{align}
Now to conclude the proof,  we take a scaling transformation
$$
 (v^{\lambda'},H^{\lambda'})(t,x) \triangleq \lambda' (v,H)({\lambda'}^2 t, \lambda' x)\quad j=0,1,2,
 $$
with $\lambda'=8\sqrt{T'}$,  which is also the classical solution of \eqref{mhdeq} in $[-\frac1{64T'},\frac1{64T'}]\times \R^3$ with $[-1,1]\subset [-\frac1{64T'},\frac1{64T'}]$.
Then from \eqref{jvjhL}, one has for $j=0,1,2$
\begin{align*}
&\|\nabla^j(v,H)\|_{L^\infty_tL^\infty_x([-T',0]\times(B(0,\frac{A^{\mu'}}{4} \widetilde{R}) \backslash B(0,\widetilde{R}))}\\
&\quad=\frac1{(\lambda')^{j+1}}
\|\nabla^j(v^{\lambda'},H^{\lambda'})\|_{L^\infty_tL^\infty_x([-\frac{1}{64},
0]\times(B(0,\frac{A^{\mu'}}{2} R') \backslash B(0,2R'))}\\
&\quad\lesssim A^{-\frac{\mu'}{3}}(T')^{-\frac{j+1}{2}}
\end{align*}
with $\widetilde{R}=16\sqrt{T'} R'$. Here
$$
\{x~:~2R'\leq|x|\leq\frac{A^{\mu'}}{2}R'\}\subset\Gamma,
$$
due to
$$A^{\mu'} R'-1\geq\frac{A^{\mu'}}{2}R'\geq2R'\geq R'+1.$$
\end{proof}

\subsection{Frequency bubbles of concentration}\label{5.bubble}
\begin{prop}\label{iv}
Let $(v,H,\pi): [-1,1] \times \R^3 \to \R^3\times \R^3\times\R$ be a classical solution of \eqref{mhdeq} satisfying \eqref{able}.
 If  there exists $(t_1,x_1)\in [0,1] \times \R^3$
\begin{equation*}
 |P_{N_1} (v,H)(t_1,x_1)| \geq A_1^{-1} N_1
\end{equation*}
with $N_1 \geq A_3$. Then there exists $(t_2,x_2) \in [-1,t_1] \times \R^3$ and $N_2 \in [A_2^{-1} N_1, A_2 N_1]$ such that
$$ A_3^{-1} N_1^{-2} \leq t_1-t_2 \leq A_3 N_1^{-2},\quad  |x_2-x_1| \leq A_4 N_1^{-1}  $$
and
\begin{equation*}
 |P_{N_2} (v,H)(t_2,x_2)| \geq A_1^{-1} N_2.
 \end{equation*}
 \end{prop}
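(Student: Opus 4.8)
The plan is to carry out the single backward step of Tao's ``backward frequency bubbling'' \cite{MR4337421}, adapted to the coupled system. First I would normalise by a parabolic rescaling and translation so that $t_1=0$ and $N_1=1$; since $N_1\geq A_3$ the relevant time window $[t_1-A_3N_1^{-2},t_1]$ sits inside the domain (indeed $A_3N_1^{-2}\leq A_3^{-1}<1$ in the original variables), so that $\|(v,H)\|_{L^\infty_tL^3_x}\leq A$, $|P_1(v,H)(0,x_1)|\geq A_1^{-1}$, and the target becomes: find $t_2\in[-A_3,-A_3^{-1}]$, $|x_2-x_1|\leq A_4$ and $N_2\in[A_2^{-1},A_2]$ with $|P_{N_2}(v,H)(t_2,x_2)|\geq A_1^{-1}N_2$.

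Next I would use the mild (Duhamel) formulation. Writing all nonlinear terms of \eqref{mhdeq} schematically as $\mathcal{N}=\nabla\!\cdot\widetilde{\mathcal{N}}$, where $\widetilde{\mathcal{N}}$ is a finite sum of products of components of $v,H$ (using $\div v=\div H=0$ and that $\mathbb{P}$ is a bounded Fourier multiplier), one has for a parameter $\theta$ (to be fixed with $C_0\log A\ll\theta\leq A_3$, say $\theta=A_3^{1/2}$)
$$P_1(v,H)(0,x_1)=e^{\theta\Delta}P_1(v,H)(-\theta)(x_1)+\int_{-\theta}^{0}e^{-s\Delta}P_1\mathcal{N}(s)(x_1)\,ds.$$
By \eqref{bern-2} and \eqref{able} the linear term is $\lesssim Ae^{-\theta/20}\leq\tfrac1{10}A_1^{-1}$; by the crude estimate $\|e^{-s\Delta}P_1\nabla\!\cdot\widetilde{\mathcal{N}}(s)\|_{L^\infty}\lesssim\|\widetilde{\mathcal{N}}(s)\|_{L^{3/2}}\lesssim A^2$ the piece $s\in(-A_3^{-1},0)$ contributes $\lesssim A_3^{-1}A^2\leq\tfrac1{10}A_1^{-1}$ (using $A_3=A^{C_0^3}\gg A^{C_0+2}$). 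Hence $\bigl|\int_{-\theta}^{-A_3^{-1}}e^{-s\Delta}P_1\mathcal{N}(s)(x_1)\,ds\bigr|\gtrsim A_1^{-1}$. For $|s|\in[A_3^{-1},\theta]$ the kernel of $e^{-s\Delta}P_1$ is concentrated at physical scale $\lesssim A_3^{1/4}\ll A_4$ with rapid decay and with $L^1$ mass $\lesssim e^{s/20}$, so bounding the far tail by $\|P_1\mathcal{N}\|_{L^\infty}\lesssim A^2$ I would deduce $\sup\{|P_1\widetilde{\mathcal{N}}(s,x)|:s\in[-\theta,-A_3^{-1}],\,|x-x_1|\leq A_4\}\gtrsim A_1^{-1}$. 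Fixing such a point $(s_*,x_*)=:(t_2,x_2)$ and one product $fg$ occurring in $\widetilde{\mathcal{N}}$ with $|P_1(fg)(s_*,x_*)|\gtrsim A_1^{-1}$, a Littlewood--Paley decomposition together with \eqref{low20} gives $P_1(fg)=\sum P_1(P_{N'}f\,P_{N''}g)$ with every surviving summand having $\max(N',N'')\gtrsim 1$; these split into \emph{paraproduct} pieces (large factor at frequency $\sim1$, other factor at frequency $\lesssim1$) and \emph{high--high} pieces ($N'\sim N''\gtrsim1$).

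Then I would argue by contradiction: if no $(t_2,x_2,N_2)$ as claimed existed, then in particular $|P_N(v,H)(s,x)|<A_1^{-1}N$ for all $N\in[A_2^{-1},A_2]$, $s\in[-\theta,-A_3^{-1}]$ and $|x-x_1|\lesssim A_4$. For the paraproduct pieces the frequency-$\sim1$ factor is then $<A_1^{-1}$, while the low factor is split at frequency $A_2^{-1}$: below $A_2^{-1}$ it is $\lesssim A\,A_2^{-1}$ by \eqref{bern} and \eqref{able} (the loss of $A$ is absorbed since $A_2^{-1}=A^{-C_0^2}$ and $C_0\gg1$), and between $A_2^{-1}$ and $1$ it is $<A_1^{-1}$ after a geometric sum; so the paraproduct contribution is $\ll A_1^{-1}$. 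Bounding the high--high pieces similarly (see below) and combining everything forces $|P_1(fg)(s_*,x_*)|\ll A_1^{-1}$, a contradiction, so there exist $N_2\in[A_2^{-1},A_2]$ and a point with $|P_{N_2}(v,H)|\geq A_1^{-1}N_2$. Undoing the normalisation gives $t_1-t_2=-s_*N_1^{-2}\in[A_3^{-1}N_1^{-2},A_3N_1^{-2}]$ (in particular $t_2\geq t_1-A_3N_1^{-2}\geq-1$), $|x_2-x_1|\leq A_4N_1^{-1}$ and $N_2\in[A_2^{-1}N_1,A_2N_1]$, which is the assertion.

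The hard part will be the high--high interactions in $\mathcal{N}$. Since only the critical $L^3$ norm of $(v,H)$ is controlled globally in space, Bernstein gives only $\|P_N(v,H)(s)\|_{L^\infty}\lesssim AN$, a bound that grows with $N$ and is therefore not summable over the high-frequency range; turning it into a summable decay of the high-frequency tail requires feeding in the quantitative regularity established earlier in Section \ref{6.upbound} — the bounded total speed estimate (Proposition \ref{ii}), the epochs of estimation (Proposition \ref{iii}) and the annuli of estimation together with the higher-order $\varepsilon$-regularity (Proposition \ref{vi} and Corollary \ref{cor1.fulu}). This is exactly why the frequency window in the conclusion has to be taken as wide as $[A_2^{-1}N_1,A_2N_1]$: so that the losses (polynomial in $A$) incurred in this step can be absorbed. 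It is the coupled-system analogue of the corresponding estimate in \cite{MR4337421}, with the additional bookkeeping forced by the fact that the magnetic field and the vorticity live at different scales.
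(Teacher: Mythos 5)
Your overall architecture matches the paper's: rescale to $N_1=1$, argue by contradiction assuming $|P_N(v,H)|<A_1^{-1}N$ for all $N\in[A_2^{-1},A_2]$ near $(t_1,x_1)$, feed this into a Duhamel representation of $P_1(v,H)$ at the concentration point, and split the nonlinearity by Littlewood--Paley into low--low (vanishing by \eqref{low20}), paraproduct, and high--high pieces. The paraproduct discussion is essentially right. But there is a genuine gap at the step you yourself identify as ``the hard part'': the high--high interactions. You propose to close it by invoking the bounded total speed estimate, the epochs of estimation, the annuli of estimation and the higher-order $\varepsilon$-regularity (Propositions \ref{ii}, \ref{iii}, \ref{vi}, Corollary \ref{cor1.fulu}). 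None of these is used in the paper's proof of Proposition \ref{iv}, and none of them could be: the epochs and annuli results only give bounds on a \emph{selected} subinterval of time, respectively a \emph{selected} spatial annulus, whereas here you need uniform control at every $s\in[-A_3,-A_3^{-1}]$ and every $x$ in a fixed ball around $x_1$ (indeed Proposition \ref{iv} is logically \emph{upstream} of those results in the scheme of Section \ref{6.upbound}; Proposition \ref{ii} enters only later, in the iteration of Corollary \ref{v}). So the mechanism you point to for summing the high-frequency tail would not work, and no substitute is supplied.

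What the paper actually does there is elementary but constitutes the bulk of the argument (Steps 1--3): starting from Duhamel and Bernstein alone it derives the smoothing bound $\|P_N(v,H)\|_{L^\infty_tL^{3/2}_x}\lesssim A^2N^{-1}$ for all $N\gtrsim A_2^{-1}$, then bootstraps this through the localized multiplier theorem \eqref{local} into $L^\infty_tL^1_x$ bounds $\lesssim A^3N^{-2}$ on a smaller ball and $L^\infty_tL^2_x$ bounds $\lesssim A^3A_1^{-1}N^{-1/2}$ on a still smaller ball, each step using the contradiction hypothesis on the middle frequency range. The high--high sum is then controlled by interpolating the $L^{3/2}$ decay against the Bernstein $L^\infty$ bound, e.g. $\|P_{N'}v\otimes P_{N''}v\|_{L^1}\lesssim (A^2(N')^{-1})^{3/2}(AN')^{1/2}=A^{7/2}(N')^{-1}$, which is summable over $N'\gtrsim A_2^{1/3}$ and yields the final contradiction $A_1^{-1}\lesssim A^6A_1^{-2}$. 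Your proposal contains no analogue of this integrability bootstrap, so as written the key estimate is unproved. (A secondary, fixable imprecision: the point $(t_2,x_2)$ in the conclusion is whatever point the negation of the contradiction hypothesis produces, not the pigeonholed point $(s_*,x_*)$ where $|P_1\widetilde{\mathcal N}|$ is large; the paper avoids this by never localizing the Duhamel time integral to a single time.)
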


\begin{proof}
First, Let us define $(v_{N_1}, H_{N_1})(t,x)=N_1^{-1}(v,H)\Big(\frac{t}{N_1^2},\frac{x}{N_1}\Big)$ which is a solution of \eqref{mhdeq}  in $(-N_1^2,N_1^2)\times \R^3$. Notices that by a simple calculation,
\begin{align*}
P_1(v_{N_1}, H_{N_1})(t,x)=N_1^{-1}P_{N_1}(v,H)\Big(\frac{t}{N_1^2},\frac{x}{N_1}\Big),
\end{align*}
which implies that
\begin{align}\label{n1-norm}
|P_1(v_{N_1}, H_{N_1})(x_1,t_1)| \geq A_1^{-1}.
\end{align}
Due to translation invariance, we also assume $(x_1,t_1)=(0,0)$. In the following, we consider $(v_{N_1}, H_{N_1})$ in $(-N_1^2,N_1^2)\times \R^3$ which still denoted by $(v,H)$ for simplicity.
Now, assume for contradiction that the claim fails, then for all $A_2^{-1} \leq N \leq A_2$, we have
\begin{align}\label{PN2vLwu2}
\| P_{N} (v,H)\|_{L^\infty_t L^\infty_x( [-A_3, -A_3^{-1}] \times B(0, A_4) )} \leq A_1^{-1} N.
\end{align}
Now we claim that the estimate \eqref{PN2vLwu2} is valid in $ [-A_3, 0] \times B(0, A_4)$. To do this, we apply $P_{N}$ to both sides of \eqref{mhdeq} and  find that by \eqref{able} and \eqref{bern}
$$ \|P_{N} \Delta (v,H) \|_{L^\infty_x(\R^3)} \lesssim N^3\|(v,H) \|_{L^3_x(\R^3)} \lesssim AN^3,$$
On the other hand, using H\"older inequality, one derives
$$\| v \otimes v-H \otimes H \|_{L^{3/2}_x(\R^3)}\lesssim \|v\|^2_{L^3_x(\R^3)}+\|H\|^2_{L^3_x(\R^3)}\lesssim A^2,$$
$$\| v \times H \|_{L^{3/2}_x(\R^3)}\lesssim \|v\|_{L^3_x(\R^3)}\|H\|_{L^3_x(\R^3)}\lesssim A^2.$$
This, together Lemma \ref{mult}, yields
$$ \| P_{N} \mathbb{P} \div(v \otimes v-H \otimes H) \|_{L^\infty_x(\R^3)} \lesssim N^3\| v \otimes v-H \otimes H \|_{L^{3/2}_x(\R^3)}\lesssim A^2N^3,$$
$$ \| P_{N} \nabla\times(v\times H) \|_{L^\infty_x(\R^3)} \lesssim N^3\| v \times H\|_{L^{3/2}_x(\R^3)}\lesssim A^2N^3,$$
from which one further obtains
\EQS{\label{dtPNvH}
\| \partial_tP_{N} (v,H) \|_{L^\infty_tL^\infty_x([-A_3^{-1},0]\times\R^3)} &\lesssim \| P_{N} \Delta (v,H)\|_{L^\infty_tL^\infty_x([-A_3^{-1},0]\times\R^3)} \\&
 \quad+ \| P_{N} \mathbb{P} \div(v \otimes v-H \otimes H) \|_{L^\infty_tL^\infty_x([-A_3^{-1},0]\times\R^3)}\\&
 \quad+ \| P_{N} \nabla\times(v\times H) \|_{L^\infty_tL^\infty_x([-A_3^{-1},0]\times\R^3)}\\&
 \lesssim A^2N^3.
 }
Therefore, it is clear that for any $t\in[-A_3^{-1},0]$
\begin{align*}
\| P_{N} v(\cdot,t)\|_{L^\infty_x(B(0, A_4))}&\leq\Big|\| P_{N} v(\cdot,t)\|_{ L^\infty_x(B(0, A_4))}-\|P_{N} v(\cdot,-A_3^{-1})\|_{L^\infty_x(B(0, A_4))}\Big|+ A_1^{-1} N\\
&=\Big| \int_{-A_3^{-1}}^{t}\partial_s\|P_{N} v(\cdot,s)\|_{L^\infty_x(B(0, A_4))}\ \mathrm{d}s\Big|+ A_1^{-1} N\\
&\leq A_3^{-1}\| \partial_tP_{N} v \|_{L^\infty_tL^\infty_x([-A_3^{-1},0]\times\R^3)}+ A_1^{-1} N\\
&
\leq A_3^{-1}A^2N^3+ A_1^{-1} N\lesssim A_1^{-1} N.
\end{align*}
Similarly,
$$
\| P_{N_2} H\|_{L^\infty_t L^\infty_x( [-A_3^{-1}, 0] \times B(0, A_4) )} \lesssim A_1^{-1} N.
$$
The above calculation, along  with \eqref{PN2vLwu2}, yields
\EQS{\label{PN2vLwu2-1}
 \| P_{N} (v,H)\|_{L^\infty_t L^\infty_x( [-A_3, 0] \times B(0, A_4) )} \lesssim A_1^{-1} N, \quad A_2^{-1} \leq N \leq A_2.
 }
In the following, we split our proof into four steps for the clarity.\\
{\bf Step 1}.\quad The $L^\infty_tL^{\frac32}_x$ estimates of $P_{N}(v,H)$ for $N\geq A_2^{-1}.$\\
For $t \in [-A_3,0]$, using Duhamel's formula, \eqref{mhdeq} and H\"older inequality, it follows that
\EQS{\label{PNvHL32}
&\|P_{N} (v,H)(\cdot,t)\|_{L^{\frac{3}{2}}_x(B(0, A_4))} \\
&\quad\leq \| e^{(t+2A_3)\Delta} P_{N} (v,H)(-2A_3) \|_{L^{\frac{3}{2}}_x(B(0,A_4))} \\
&\quad\quad + \int_{-2A_3}^t \| e^{(t-t')\Delta} P_{N}  \{\mathbb{P}\mathrm{div}(v\otimes v-H\otimes H),\nabla\times(v\times H)\}(t')\|_{L^{\frac{3}{2}}_x(\R^3)}\ \mathrm{d}t'\\
&\quad\lesssim AA_4 e^{-\frac{N^2(t+2A_3)}{20}} +\int_{-2A_3}^t e^{-\frac{N^2(t-t')}{20}} N\Big(\|v\|_{L^{3}_x(\R^3)}+\|H\|_{L^{3}_x(\R^3)}\Big)\ \mathrm{d}t'\\
&\quad\lesssim AA_4 e^{-\frac{N^2A_3}{20}}+A^2N^{-1}\lesssim A^2N^{-1}.
}
{\bf Step 2}.\quad The $L^\infty_tL^{1}_x$ estimates of $P_{N}(v,H)$ for $A_2^{-\frac12} \leq N \leq A^3A_4^{40}$.\\
For $t \in [-\frac{A_3}{2},0]$, from \eqref{bern-2} and \eqref{able} we can obtain
\begin{align}\label{PN2vL1}
\|P_{N} v(t)\|_{L^1_x(B(0, \frac{5A_4}{8}))} &\leq \| e^{(t+A_3)\Delta} P_{N} v(-A_3) \|_{L^1_x(B(0, \frac{5A_4}{8}))} \nonumber\\
&\quad+ \int_{-A_3}^t \| P_{N}e^{(t-t')\Delta}\mathbb{P} \mathrm{div}[\widetilde{P}_{N}(v \otimes v-H\otimes H)](t')\|_{L^1_x(B(0, \frac{5A_4}{8}))}\ \mathrm{d}t'\nonumber\\
&\lesssim A A_4^2 e^{- \frac{N^2A_3}{40}}+ \int_{-A_3}^tN e^{-\frac{N^2(t-t')}{20}}\Big(\|\widetilde{P}_{N}(v \otimes v-H\otimes H)\|_{L^1_x(B(0,\frac{3A_4}{4})}\nonumber\\
&\quad\quad+ A_4^{-50}A_4\| \widetilde{P}_{N}(v \otimes v-H\otimes H) \|_{L^{\frac{3}{2}}_x(\R^3)}\Big)\ \mathrm{d}t'\nonumber\\
&\lesssim A^3N^{-2}+N^{-1}\Big(\|\widetilde{P}_{N}(v \otimes v)\|_{L^\infty_t L^1_x([-A_3, 0] \times B(0,\frac{3A_4}{4}))}\nonumber\\
&\quad\quad
+\|\widetilde{P}_{N}(H \otimes H)\|_{L^\infty_t L^1_x([-A_3, 0] \times B(0,\frac{3A_4}{4}))}\Big)+A_4^{-40}N^{-1}.
\end{align}
In order to estimate the  $L^\infty_t L^1_x([-A_3, 0] \times B(0,\frac{3A_4}{4}))$ for $\widetilde{P}_{N}(v \otimes v, H \otimes H)$,
we can write
\begin{equation*}
\begin{aligned}
\widetilde{P}_{N}(v \otimes v)=\widetilde{P}_{N}(P_{>\frac{N}{100}}v \otimes v)+\widetilde{P}_{N}(P_{\leq\frac{N}{100}}v \otimes P_{>\frac{N}{100}}v),
\end{aligned}
\end{equation*}
where we have used the fact that \eqref{low20}. From \eqref{local}, \eqref{able}, \eqref{PNvHL32} and H\"older inequality, we have
\begin{align*}
 &\| \widetilde{P}_{N}(P_{>{\frac{N}{100}}}v \otimes v, P_{\leq\frac{N}{100}}v \otimes P_{>\frac{N}{100}}v)\|_{L^\infty_t L^1_x( [-A_3, 0] \times B(0, \frac{3A_4}{4}) )}\\
 &\quad\lesssim \| (P_{>\frac{N}{100}}v \otimes v, P_{\leq\frac{N}{100}}v \otimes P_{>\frac{N}{100}}v)\|_{L^\infty_t L^{1}_x( [-A_3, 0] \times B(0, A_4) )}\\
  &\quad\quad+A_4^{-50}A_4\| (P_{>\frac{N}{100}}v \otimes v,P_{\leq\frac{N}{100}}v \otimes P_{>\frac{N}{100}}v)\|_{L^\infty_t L^{\frac32}_x( [-A_3, 0] \times \R^3 )}\\
  &\quad\lesssim A\sum_{N'>\frac{N}{100}}\| P_{N'} v\|_{L^\infty_t L^{\frac{3}{2}}_x( [-A_3, 0] \times B(0, A_4) )}\\
  &\quad\quad +\| P_{\leq\frac{N}{100}}v \|_{L^\infty_t L^{3}_x( [-A_3, 0] \times B(0, A_4) )}
  \|P_{>\frac{N}{100}}v\|_{L^\infty_t L^{\frac{3}{2}}_x( [-A_3, 0] \times B(0, A_4) )}
  +A^{-50}_4A_4A^2\\
 &\quad\lesssim A^3 \sum_{N'>\frac{N}{100}}{N'}^{-1}+A^3 N^{-1}+A_4^{-40}\lesssim A^3 N^{-1}+A_4^{-40}.
\end{align*}
We thus have
\begin{align*}
\|\widetilde{P}_{N}(v \otimes v)\|_{L^\infty_t L^1_x([-A_3, 0] \times B(0,\frac{3A_4}{4}))}\lesssim A^3 N^{-1}+A_4^{-40}.
\end{align*}
Repeat the calculation process, obviously we can get
\begin{align*}
\|\widetilde{P}_{N}(H \otimes H)\|_{L^\infty_t L^1_x([-A_3, 0] \times B(0,\frac{3A_4}{4}))}\lesssim A^3 N^{-1}+A_4^{-40}.
\end{align*}
Combining with \eqref{PN2vL1}, we have the estimate
\EQS{\label{PN2vL1-1}
\|P_{N} v\|_{L^\infty_t L^1_x([-A_3, 0] \times B(0, \frac{5A_4}{8}))} \lesssim A^3 N^{-2}+A_4^{-40}N^{-1}\lesssim A^3 N^{-2}
}
due to $A_2^{-\frac12} \leq N \leq A^3A_4^{40}$. Similarly,
\EQS{\label{PN2vL1-2}
\|P_{N} H\|_{L^\infty_t L^1_x([-A_3, 0] \times B(0, \frac{5A_4}{8}))} \lesssim A^3 N^{-2}+A_4^{-40}N^{-1}\lesssim A^3 N^{-2}.
}
for $A_2^{-\frac12} \leq N \leq A^3A_4^{40}$.\\
{\bf Step 3}.\quad The $L^\infty_tL^{2}_x$ estimates of $P_{N}(v,H)$ for $A_2^{-\frac13} \leq N \leq A_2^{\frac13}$.\\
By using Duhamel's formula, \eqref{mhdeq} and the triangle inequality as before, for any $t \in [-\frac{A_3}{3},0]$ we have
\begin{align}\label{tin3A30PN2v}
\|P_{N} v(t)\|_{L^2_x(B(0, \frac{A_4}{4}))} &\leq \| e^{(t+\frac{A_3}{2})\Delta} P_{N} v(-\frac{A_3}{2}) \|_{L^2_x(B(0, \frac{A_4}{4}))} \nonumber\\
&\quad+ \int_{-\frac{A_3}{2}}^t \| P_{N}e^{(t-t')\Delta}\mathbb{P}\mathrm{div}[\widetilde{P}_{N}(v \otimes v-H\otimes H)](t')\|_{L^2_x(B(0, \frac{A_4}{4}))}\ \mathrm{d}t'\nonumber\\
&
\lesssim A A_4^{\frac{1}{2}} e^{- \frac{N^2A_3}{120}}+N^{\frac12}\Big(\|\widetilde{P}_{N}(v \otimes v)\|_{L^\infty_t L^1_x([-\frac{A_3}{2}, 0] \times B(0,\frac{A_4}{3}))}\nonumber\\
&\quad
+\|\widetilde{P}_{N}(H \otimes H)\|_{L^\infty_t L^1_x([-\frac{A_3}{2}, 0] \times B(0,\frac{A_4}{3}))}\Big)+A_4^{-40}.
\end{align}
In order to estimate $\|\widetilde{P}_{N}(v \otimes v)\|_{L^\infty_t L^1_x([-\frac{A_3}{2}, 0] \times B(0,\frac{A_4}{3}))}$, we  split
\begin{align*}
&\widetilde{P}_{N}(v(t') \otimes v(t'))\\
&\quad=\sum_{N'\sim N^{''} \lesssim N}
\widetilde{P}_{N}(P_{N'} v(t') \otimes P_{N^{''}} v(t'))
+\sum_{N'\lesssim N^{''}\sim N}
\widetilde{P}_{N}(P_{N'} v(t') \otimes P_{N^{''}} v(t'))\\
&\quad\quad+\sum_{N^{''}\lesssim N^{'} \sim  N}
\widetilde{P}_{N}(P_{N'} v(t') \otimes P_{N^{''}} v(t'))
+\sum_{N\lesssim N'\sim N^{''} }
\widetilde{P}_{N}(P_{N'} v(t') \otimes P_{N^{''}} v(t')).
\end{align*}
Here $\sum_{N'\sim N^{''} \lesssim N}
\widetilde{P}_{N}(P_{N'} v(t') \otimes P_{N^{''}} v(t'))$, called by the ``low-low" term, disappears due to \eqref{low20}. Then, we use the triangle inequality, H\"older inequality, \eqref{local}, and \eqref{PN2vLwu2-1}-\eqref{PN2vL1-2} to deduce
\begin{align}\label{PN2vvLwuL2}
&\|\widetilde{P}_{N}(v \otimes v)\|_{L^\infty_t L^1_x([-\frac{A_3}{2}, 0] \times B(0,\frac{A_4}{3}))}\nonumber\\
&\quad\lesssim \sum_{N'\lesssim N^{''}\sim N}\|P_{N'}v \otimes P_{N^{''}}v\|_{L^\infty_t L^1_x([-\frac{A_3}{2}, 0] \times B(0,\frac{A_4}{2}))}\nonumber\\
&\quad\quad+\sum_{N\lesssim N'\sim N^{''} }\|P_{N'}v \otimes P_{N^{''}}v\|_{L^\infty_t L^{1}_x([-\frac{A_3}{2}, 0] \times B(0,\frac{A_4}{2}))}+A_4^{-40}\nonumber\\
&\quad\lesssim \Big(\sum_{N'\leq A_2^{-1}}+\sum_{A_2^{-1}\leq N'\lesssim N}
\Big)\|P_{N'}v\|_{L^\infty_t L^\infty_x([-\frac{A_3}{2}, 0] \times B(0,\frac{A_4}{2}))}\|P_{N}v\|_{L^\infty_t L^1_x([-\frac{A_3}{2}, 0] \times B(0,\frac{A_4}{2}))}\nonumber\\
&\quad\quad+\sum_{N\lesssim N'\leq A_2}
\|P_{N'}v\|_{L^\infty_t L^{1}_x([-\frac{A_3}{2}, 0] \times B(0,\frac{A_4}{2}))}\|P_{N'}v\|_{L^\infty_t L^\infty_x([-\frac{A_3}{2}, 0] \times B(0,\frac{A_4}{2}))}\nonumber\\
&\quad\quad+\sum_{N'\geq A_2}
\|P_{N'}v\|^{\frac{3}{2}}_{L^\infty_t L^{\frac{3}{2}}_x([-\frac{A_3}{2}, 0] \times B(0,\frac{A_4}{2}))}\|P_{N'}v\|^{\frac{1}{2}}_{L^\infty_t L^\infty_x([-\frac{A_3}{2}, 0] \times B(0,\frac{A_4}{2}))}+A_4^{-40}\nonumber\\
&\quad\lesssim \sum_{N'\leq A_2^{-1}}AN'A^3N^{-2}+\sum_{A_2^{-1}\leq N'\lesssim N}A_1^{-1}N'A^3N^{-2}\nonumber\\
&\quad\quad+
\sum_{N\lesssim N'\leq A_2}A^{3}(N')^{-2}A_1^{-1}N'
+\sum_{N'\geq A_2}(A^2(N')^{-1})^{\frac32}(AN')^{\frac12}+A_4^{-40}\nonumber\\
&\quad\lesssim A^4A_2^{-1}N^{-2}+A_1^{-1}A^3N^{-1}
+A^4A_2^{-1}+A_4^{-40}.
\end{align}
Similarly, we also have
\begin{align*}
\|\widetilde{P}_{N}(H \otimes H)\|_{L^\infty_t L^1_x([-\frac{A_3}{2}, 0] \times B(0,\frac{A_4}{3}))}
\lesssim A^4A_2^{-1}N^{-2}+A_1^{-1}A^3N^{-1}
+A^4A_2^{-1}+A_4^{-40},
\end{align*}
which, along with \eqref{tin3A30PN2v} and \eqref{PN2vvLwuL2}, yields
\begin{align*}
\|P_{N} v\|_{L^\infty_t L^2_x([-\frac{A_3}{3}, 0] \times B(0, \frac{A_4}{4}))}
\lesssim A^{3}A_1^{-1}N^{-\frac12}.
\end{align*}
Similarly,
\begin{align*}
\|P_{N} H\|_{L^\infty_t L^2_x([-\frac{A_3}{3}, 0] \times B(0, \frac{A_4}{4}))}
\lesssim A^{3}A_1^{-1}N^{-\frac12}.
\end{align*}
{\bf Step 4}.\quad End of the proof.\\
First, using the hypothesis \eqref{n1-norm} and Duhamel's formula, one has
\begin{align}\label{P1vv-HH}
A_1^{-1} &\leq|P_1 (v,H)(0,0)|\nonumber\\
&\leq |e^{\frac{A_3}{4} \Delta} P_1 (v,H)(-\frac{A_3}{4},0)| \nonumber\\
&\quad+ \int_{-\frac{A_3}{4}}^0 |e^{-t'\Delta} P_1 \widetilde{P}_1\{\mathbb{P}\mathrm{div}[ (v \otimes v-H \otimes H)], \nabla\times(v\times H)\}(t',0)|\ \mathrm{d}t'\nonumber\\
&\lesssim Ae^{-\frac{A_3}{80}}+\int_{-\frac{A_3}{4}}^0 e^{\frac{t'}{20}} \Big(\|\widetilde{P}_1(v \otimes v-H \otimes H, v\times H)\|_{L^\infty_x(B(0,\frac{A_4}{8}))}+A_4^{-50}A^2\Big) \mathrm{d}t'\nonumber\\
&\leq {\frac{1}{2}}A_1^{-1}+C \int_{-\frac{A_3}{4}}^0 e^{\frac{t'}{20}} \Big(\|\widetilde{P}_1(v \otimes v)\|_{L^\infty_x(B(0,\frac{A_4}{8}))}\nonumber\\& \quad+
  \|\widetilde{P}_1 (H \otimes H)\|_{L^\infty_x(B(0,\frac{A_4}{8}))}+\|\widetilde{P}_1 (v\times H)\|_{L^\infty_x(B(0,\frac{A_4}{8}))}\Big) \mathrm{d}t'.
 \end{align}
 To conclude the proof,  we need to consider $\|\widetilde{P}_1(v \otimes v) \|_{L^\infty_t L^\infty_x([-\frac{A_3}{4}, 0] \times B(0,\frac{A_4}{8}))}$, $\|\widetilde{P}_1(H \otimes H) \|_{L^\infty_t L^\infty_x([-\frac{A_3}{4}, 0] \times B(0,\frac{A_4}{8}))}$ and $\|\widetilde{P}_1(v \times H) \|_{L^\infty_t L^\infty_x([-\frac{A_3}{4}, 0] \times B(0,\frac{A_4}{8}))}$, respectively.  Indeed,
 \begin{align*}
&\|\widetilde{P}_{1}(v \otimes v)\|_{L^\infty_t L^\infty_x([-\frac{A_3}{4}, 0] \times B(0,\frac{A_4}{8}))}\\
&\quad\lesssim \sum_{N'\lesssim N^{''}\sim 1}\|P_{N'}v \otimes P_{N^{''}}v\|_{L^\infty_t L^\infty_x([-\frac{A_3}{4}, 0] \times B(0,\frac{A_4}{4}))}\\
&\quad\quad+\sum_{1\lesssim N'\sim N^{''} \leq A_2^{\frac13}} \|P_{N'}v \otimes P_{N^{''}}v\|_{L^\infty_t L^{1}_x([-\frac{A_3}{4}, 0] \times B(0,\frac{A_4}{4}))}\\
&\quad\quad+\sum_{ N'\sim N^{''} \geq A_2^{\frac13}} \|P_{N'}v \otimes P_{N^{''}}v\|_{L^\infty_t L^{1}_x([-\frac{A_3}{4}, 0] \times B(0,\frac{A_4}{4}))}+A_4^{-40}\\
&\quad\lesssim\sum_{A_2^{-1}\leq N'\lesssim N^{''}\sim 1}\|P_{N'}v\|_{L^\infty_t L^\infty_x([-\frac{A_3}{4}, 0] \times B(0,\frac{A_4}{4}))} \|P_{N^{''}}v\|_{L^\infty_t L^\infty_x([-\frac{A_3}{4}, 0] \times B(0,\frac{A_4}{4}))}\\
&\quad\quad+\sum_{N'\leq A_2^{-1},~N^{''}\sim 1}\|P_{N'}v\|_{L^\infty_t L^\infty_x([-\frac{A_3}{4}, 0] \times B(0,\frac{A_4}{4}))} \|P_{N^{''}}v\|_{L^\infty_t L^\infty_x([-\frac{A_3}{4}, 0] \times B(0,\frac{A_4}{4}))}\\
&\quad\quad+\sum_{1\lesssim N'\sim N^{''}\leq A_2^{\frac13}}\|P_{N'}v\|_{L^\infty_t L^2_x([-\frac{A_3}{4}, 0] \times B(0,\frac{A_4}{4}))} \|P_{N^{''}}v\|_{L^\infty_t L^2_x([-\frac{A_3}{4}, 0] \times B(0,\frac{A_4}{4}))}\\
&\quad\quad+\sum_{A_2^{\frac13}\leq N'\sim N^{''}}\|P_{N'}v\|^{\frac32}_{L^\infty_t L^{\frac32}_x([-\frac{A_3}{4}, 0] \times B(0,\frac{A_4}{4}))} \|P_{N^{''}}v\|^{\frac12}_{L^\infty_t L^\infty_x([-\frac{A_3}{4}, 0] \times B(0,\frac{A_4}{4}))}+A_4^{-40}\\
&\quad\lesssim\sum_{A_2^{-1}\leq N'\lesssim N^{''}\sim 1}(A_1^{-1}N')(A_1^{-1}N^{''})
+\sum_{N'\leq A_2^{-1},~N^{''}\sim 1}(AN')(AN^{''})\\
&\quad\quad+\sum_{1\lesssim N'\sim N^{''}\leq A_2^{\frac13}}\Big[A^3A_1^{-1}(N')^{-\frac{1}{2}}\Big]^2
+\sum_{A_2^{\frac13}\leq N'\sim N^{''}}[A^2(N')^{-1}]^{\frac32}(AN')^{\frac12}+A_4^{-40}\\
&\quad\lesssim A_1^{-2}+A^2A_2^{-1}+
A^{6}A_1^{-2}+A^{\frac{7}{2}}A_2^{-\frac{1}{3}}
+A_4^{-40}\lesssim A^{6}A_1^{-2}.
\end{align*}
Similarly, we have
\begin{align*}
\|\widetilde{P}_{1}(H \otimes H)\|_{L^\infty_t L^\infty_x([-\frac{A_3}{4}, 0] \times B(0,\frac{A_4}{8}))}+
\|\widetilde{P}_{1}(v \times H)\|_{L^\infty_t L^\infty_x([-\frac{A_3}{4}, 0] \times B(0,\frac{A_4}{8}))}\lesssim A^{6}A_1^{-2}.
\end{align*}
Putting the above estimate into \eqref{P1vv-HH} we get
$$ A_1^{-1} \lesssim A^{6}A_1^{-2}$$
which derives a contradiction.
That is there exist $(\widetilde{t},\widetilde{x})\in[-A_3, -A_3^{-1}] \times B(0,A_4)$ and $\widetilde{N}\in[A_2^{-1}, A_2]$ such that
\begin{align*}
|P_{\widetilde{N}}(v_{N_1},H_{N_1})(\widetilde{t},\widetilde{x})|
 \geq A_1^{-1}\widetilde{N}.
\end{align*}
On the other hand,
\begin{align*}
N_1P_{\widetilde{N}}(v_{N_1},H_{N_1})(\widetilde{t},\widetilde{x})
&=P_{\widetilde{N}}(v,H)(\frac{\widetilde{t}}{N_1^{2}},
\frac{\widetilde{x}}{N_1})\\
&=N_1^3\int_{\frac{\widetilde{N}}{4}\leq|\xi|\leq \widetilde{N}}e^{2\pi i\widetilde{x}\cdot\xi}[\varphi(\frac{\xi}{\widetilde{N}})-
\varphi(\frac{2\xi}{\widetilde{N}})](\widehat{v},\widehat{H})(\frac{
\widetilde{t}}{N_1^{2}},N_1\xi)\ \mathrm{d}\xi\\
&=\int_{\frac{N_1\widetilde{N}}{4}\leq|\xi'|\leq N_1\widetilde{N}}e^{2\pi i\frac{\widetilde{x}}{N_1}\cdot\xi'}[\varphi(\frac{\xi'}{N_1\widetilde{N}})-
\varphi(\frac{2\xi'}{N_1\widetilde{N}})](\widehat{v},\widehat{H})(
\frac{\widetilde{t}}{N_1^{2}},\xi')\ \mathrm{d}\xi'\\
&=P_{N_1\widetilde{N}}(v,H)(t_2,x_2),
\end{align*}
with
$(t_2,x_2)=(\frac{\widetilde{t}}{N_1^{2}},
\frac{\widetilde{x}}{N_1})\in[-A_3N_1^{-2}, -A_3^{-1}N_1^{-2}] \times B(0,A_4N_1^{-1})$.
Therefore,
\begin{align*}
|P_{N_2}(v,H)(t_2,x_2)| \geq A_1^{-1}N_2.
\end{align*}
with $N_2=N_1\widetilde{N}\in[A_2^{-1}N_1, A_2N_1]$. This, along with the translation invariance of system \eqref{mhdeq}, concludes the proof.
\end{proof}

\begin{cor}\label{v}
 Let $(v,H,p): [-1,1] \times \R^3 \to \R^3\times \R^3\times\R$ be a classical solution of \eqref{mhdeq} satisfying \eqref{able} and $N_0>A_4$. For $t_0=1$, if there exists  $x_0\in \R^3$  such that
$$ |P_{N_0} (v,H)(t_0,x_0)| \geq A_1^{-1} N_0.$$
Then for every $A_4 N_0^{-2} \leq T_1 \leq A_4^{-1} $, there exist
$$ (t_1,x_1)\in[t_0-T_1, t_0 - A_3^{-3} T_1] \times B(x_0,A_4^{2}T_1^{\frac{1}{2}}),$$
and
$$ A_3^{-1} T_1^{-\frac{1}{2}} \leq N_1 \leq A_3^{\frac12} T_1^{-\frac{1}{2}},$$
such that
$$ |P_{N_1} (v,H)(t_1,x_1)| \geq A_1^{-1} N_1.$$
\end{cor}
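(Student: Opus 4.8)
The plan is to produce the triple $(t_1,x_1,N_1)$ by iterating the backward frequency bubbling of Proposition \ref{iv}. Starting from the given bubble at $(t_0,x_0,N_0)$ (with $N_0>A_4\geq A_3$), I would apply Proposition \ref{iv} repeatedly to build a chain $(t_0,x_0,N_0),(\tilde t_1,\tilde x_1,\tilde N_1),(\tilde t_2,\tilde x_2,\tilde N_2),\dots$ with $\tilde N_n\in[A_2^{-1}\tilde N_{n-1},A_2\tilde N_{n-1}]$, $\tilde t_{n-1}-\tilde t_n\in[A_3^{-1}\tilde N_{n-1}^{-2},A_3\tilde N_{n-1}^{-2}]$ and $|\tilde x_n-\tilde x_{n-1}|\leq A_4\tilde N_{n-1}^{-1}$ — this requires the running frequency to stay $\geq A_3$ so that Proposition \ref{iv} can be reapplied. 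The output will be the bubble $(\tilde t_{n_\ast},\tilde x_{n_\ast},\tilde N_{n_\ast})$ for a carefully chosen stopping index $n_\ast$, roughly: the first index at which \emph{either} the elapsed time $t_0-\tilde t_n$ reaches a fixed small multiple of $T_1$, \emph{or} the frequency $\tilde N_n$ drops to the target scale $\sim T_1^{-1/2}$, whichever occurs first. One checks the chain can be run that far: for $n<n_\ast$ the frequency is still $\gtrsim A_3^{1/2}T_1^{-1/2}\geq A_3^{1/2}A_4^{1/2}\geq A_3$ (using $T_1\leq A_4^{-1}$ and $A_4\geq A_3$), so Proposition \ref{iv} applies; and $n_\ast$ is finite since an infinite backward chain would force $\tilde N_n\to\infty$ with $\tilde t_n$ convergent, contradicting the smoothness and the rapid frequency decay of the classical solution on a compact parabolic region.

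Given $n_\ast$, I would read off the conclusion coordinate by coordinate. The frequency window $A_3^{-1}T_1^{-1/2}\leq N_1\leq A_3^{1/2}T_1^{-1/2}$ follows from the stopping rule together with the one–step ratio bound: since $A_2=A^{C_0^2}$ is far smaller than $A_3^{3/2}=A^{\frac32 C_0^3}$, the frequency cannot leap across the target window in a single step, so the first time it reaches the upper threshold it is still above $A_2^{-1}A_3^{1/2}T_1^{-1/2}\geq A_3^{-1}T_1^{-1/2}$. For the time window, the lower bound $t_0-t_1\geq A_3^{-3}T_1$ is cheap: either the time part of the stopping rule forces it, or (in the frequency–triggered case) already the last step gives $t_0-t_1\geq \tilde t_{n_\ast-1}-\tilde t_{n_\ast}\geq A_3^{-1}\tilde N_{n_\ast-1}^{-2}\gtrsim A_2^{-2}A_3^{-2}T_1\geq A_3^{-3}T_1$ (using $A_3\geq A_2^2$ and $\tilde N_{n_\ast-1}\leq A_2\tilde N_{n_\ast}$). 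The upper bound $t_0-t_1=\sum_{k<n_\ast}(\tilde t_k-\tilde t_{k+1})\leq A_3\sum_{k<n_\ast}\tilde N_k^{-2}\leq T_1$ needs the \emph{whole} sum $\sum_{k<n_\ast}\tilde N_k^{-2}$: the time part of the stopping rule makes everything except the last one or two steps contribute only $O(A_2^{-2}T_1)$, while those last steps contribute $O(T_1)$ since there $\tilde N_k\gtrsim A_3^{1/2}T_1^{-1/2}$. For the spatial displacement I would use $\tilde N_k^{-1}\leq A_3^{1/2}(\tilde t_k-\tilde t_{k+1})^{1/2}$ (from the lower bound on the time steps) and a Cauchy--Schwarz / geometric–series estimate to get $|x_1-x_0|\leq\sum_{k<n_\ast}A_4\tilde N_k^{-1}\lesssim A_4A_3^{1/2}T_1^{1/2}\leq A_4^2T_1^{1/2}$.

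The main obstacle is precisely the bookkeeping of the two sums $\sum_{k<n_\ast}\tilde N_k^{-2}$ and $\sum_{k<n_\ast}\tilde N_k^{-1}$, i.e. showing that neither the elapsed time nor the displacement overshoots. Proposition \ref{iv} only controls the \emph{ratio} of consecutive frequencies (by the factor $A_2$), so a priori the chain could linger near an intermediate frequency for many steps and pile up too much time; the sums behave as desired only once one knows that $\tilde N_k$ stays comparable to $(t_0-\tilde t_k)^{-1/2}$, up to powers of $A_3$, throughout the descent from $N_0$ down to $\sim T_1^{-1/2}$. I expect establishing this comparability to be the crux: it should come from choosing $n_\ast$ as the first index meeting a calibrated ``time-threshold-or-frequency-threshold'' condition, and then verifying the two–sided estimate by a short induction / telescoping that peels off the last step (where the frequency is near the target) from the remainder (controlled by the time threshold). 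The remaining pieces — reapplicability of Proposition \ref{iv} at each step, finiteness of $n_\ast$, and the elementary inequalities among $A_2,A_3,A_4$ — are routine once the constants are fixed, and one finishes exactly as in Proposition \ref{iv} by invoking the translation invariance of \eqref{mhdeq} (the normalization $t_0=1$ is already part of the hypothesis here).
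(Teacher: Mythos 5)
Your skeleton (iterate Proposition \ref{iv} backward from $(t_0,x_0,N_0)$, stop at a well-chosen index, read off the three conclusions) is the same as the paper's, and your finiteness argument for the chain matches the paper's (the $L^\infty$ bound of the classical solution caps the frequencies, hence bounds the time steps from below). But two of your three "read-offs" have genuine gaps, and in both cases the missing ingredient is the bounded total speed estimate, Proposition \ref{ii}, which you never invoke.

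First, the spatial displacement. You propose $|x_1-x_0|\leq A_4\sum_{k}N_k^{-1}\leq A_4A_3^{1/2}\sum_k(t_k-t_{k+1})^{1/2}$ and then "Cauchy--Schwarz / geometric series." Cauchy--Schwarz goes the wrong way here: $\sum_k(t_k-t_{k+1})^{1/2}\leq \sqrt{n_\ast}\,\bigl(\sum_k(t_k-t_{k+1})\bigr)^{1/2}\leq\sqrt{n_\ast}\,T_1^{1/2}$, and $n_\ast$ is only bounded in terms of $\|(v,H)\|_{L^\infty}$, not universally (take $n$ equal time steps $T_1/n$ to see $\sum\sqrt{a_k}=\sqrt{n}\,T_1^{1/2}$ is unbounded). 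A geometric-series argument would need $N_k\sim(t_0-t_k)^{-1/2}$, which is exactly what you cannot assume. The paper instead observes that the intervals $[t_k-A_3^{-1}N_k^{-2},t_k]$ are disjoint inside $[1-T_1,1]$ and that $\|P_{N_k}(v,H)(t)\|_{L^\infty_x}\gtrsim A_1^{-1}N_k$ on each of them (a frequency-localized time-continuity argument via \eqref{dtPNvH}), so that $\sum_k A_3^{-1}A_1^{-1}N_k^{-1}\lesssim\|(v,H)\|_{L^1_tL^\infty_x([1-T_1,1]\times\R^3)}\lesssim A^4T_1^{1/2}$ by \eqref{bts}. This is the only known route to $\sum_k N_k^{-1}\lesssim A_3^2T_1^{1/2}$, and it is indispensable.

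Second, the selection of the index. Your stopping rule ("first time the elapsed time reaches $cT_1$ or the frequency drops to $\sim T_1^{-1/2}$") only yields the frequency window $A_3^{-1}T_1^{-1/2}\leq N_1\leq A_3^{1/2}T_1^{-1/2}$ in the frequency-triggered case. In the time-triggered case the last bubble can have $N_{n_\ast}\gg T_1^{-1/2}$: nothing in Proposition \ref{iv} prevents the chain from lingering near a fixed high frequency for many steps, each contributing time $\ll T_1$, until the elapsed time reaches $cT_1$ with all frequencies still far above the target. You correctly identify the needed comparability $N_k\sim(t_0-t_k)^{-1/2}$ as "the crux," but it is false in general and cannot be recovered by induction on the stopping rule. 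The paper's fix is different in kind: it restricts the chain to the indices with $t_i\geq 1-T_1$, uses $T_1<\sum_i A_3N_i^{-2}$ and the pigeonhole principle to select an \emph{intermediate} index $i$ with $N_i^{-1}\gtrsim A_3^{-1/2}T_1^{1/2}$ (ruling out $i=0$ via $N_0^{-2}\leq A_4^{-1}T_1$), and then gets the time lower bound $1-t_i\geq A_3^{-1}N_{i-1}^{-2}\geq A_3^{-2}N_i^{-2}\gtrsim A_3^{-3}T_1$ from the one-step relations \eqref{iter-2}--\eqref{iter-3}. To repair your argument you would need to replace the first-hitting-time selection by this pigeonhole (or show separately that the time-triggered case cannot occur, which requires the same total-speed input).
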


\begin{proof}
By iteratively applying Proposition \ref{iv}, we may find a sequence
$$(t_0,x_0), (t_1,x_1), \dots, (t_n,x_n) \in [-1,1]\times\R^3,$$
and $N_0,N_1,\dots,N_n>0$ for some $n \geq 1$, with the properties
\begin{align}
&|P_{N_i} (v,H)(t_i,x_i)| \geq A_1^{-1} N_i \label{iter-1}\\
&A_2^{-1} N_{i-1} \leq N_i \leq A_2 N_{i-1}, \label{iter-2}\\
&A_3^{-1} N_{i-1}^{-2} \leq t_{i-1} - t_i \leq A_3 N_{i-1}^{-2}, \label{iter-3}\\
&|x_i - x_{i-1}| \leq A_4 N_{i-1}^{-1},\label{iter-4}
\end{align}
for all $i=1,2,\dots,n$.
 Firstly, we claim the above  iterations $n<\infty$. Indeed, since $(v,H)$ is a classical solution of \eqref{mhdeq}, one has
$$A_1^{-1}N_i\leq |P_{N_i} (v,H)(t_i,x_i)|\leq \|(v,H)\|_{L^\infty_tL^\infty_x([-1,1]\times\R^3)}\triangleq L<\infty,$$
which, together with \eqref{iter-3}, implies
 \begin{align*}
 t_{i-1}-t_i\geq A_3^{-1}A_1^{-2}L^{-2}.
 \end{align*}
This leads us to infer
$$n\leq 2A_3A_1^2L^2<\infty.$$
Secondly, we prove $t_n<1-T_1$ for any  $T_1\in [A_4 N_0^{-2}, A_4^{-1} ]$. In fact, we notice by Proposition \ref{iv} that if the iteration \eqref{iter-1}-\eqref{iter-4} stops, either $N_n < A_3$  or $t_n$ is close to $-1$.
If $N_n < A_3 $, then by \eqref{iter-3}, \eqref{iter-2}
$$
t_{n-1} - t_n \geq A_3^{-1} N_{n-1}^{-2} \geq A_3^{-1}A_2^{-2} N_n^{-2} > A_3^{-3} A_2^{-2} .
$$
This implies
$$t_n <1 -A_3^{-3} A_2^{-2}<1-A_4^{-1}\leq 1-T_1\leq 1-A_4N_0^{-2}<t_1,$$
due to $t_{n-1}< 1.$
On the other hand, if $t_n \in [-1,0)$,  it is clear that
$$
t_n <0< 1 -A_4^{-1}\leq 1-T_1\leq 1-A_4N_0^{-2}<1-A_3N_0^{-2}<t_1,
$$
i.e., $t_n<1-T_1<t_1$. Now, we define
$$
m=\max\{0\leq i<n~|~ t_i \geq 1 - T_1\},
$$
then $1 \leq m \leq n-1~(n\geq 2)$ and $t_{m+1}<1-T_1$.  Next, we split our proof into two steps. \medskip

{\bf Step 1}.\quad{\em For any $T_1\in [A_4 N_0^{-2}, A_4^{-1} ]$, there exists some $1\leq i\leq m$ such that
\EQS{\label{aim-1-1}
  A_3^{-1} T_1^{-\frac{1}{2}}\leq N_i \lec  A_3^{\frac{1}{2}} T_1^{-\frac{1}{2}}, \quad 1-T_1\leq t_i\lesssim 1-A_3^{-3}T_1
}}
In fact, from \eqref{iter-3},
\begin{equation*}
T_1<1-t_{m+1}=\sum_{i=1}^{m+1}(t_{i-1}-t_i)\leq\sum_{i=0}^m A_3 N_i^{-2}.
\end{equation*}
this, together with the  pigeonhole principle,   shows that there exists  some $i\in \{0,1\dots,m\}$ such that
\EQS{\label{aim-1}
N_i^{-1} \geq\frac{1}{m+1} A_3^{-\frac12} T_1^{\frac{1}{2}}.
}
 If the above inequality is valid for $i=0$, then
$$A_4A_3^{-1} T_1\lesssim A_4 N_0^{-2}\leq T_1,$$
which derives a contradiction. Thus \eqref{aim-1} is valid only for some $1 \leq i \leq m$. Finally, \eqref{aim-1}, along with \eqref{iter-3} and \eqref{iter-2}, implies
\begin{align*}
T_1\geq 1- t_i \geq t_{i-1} - t_i\geq A_3^{-1} N_{i-1}^{-2}\geq A_3^{-1}A_2^{-2} N_i^{-2} \geq A_3^{-2}N_i^{-2} \gec A_3^{-3} T_1.
\end{align*}
 This, together with \eqref{aim-1},  leads to \eqref{aim-1-1}.

{\bf Step 2}. {\em Let $i$ be given by Step 1, then
  \EQS{\label{aim-2-0}
  |x_i - x_0|\lesssim A_4^{2} T_1^{\frac{1}{2}}.
  }}
 In fact, from \eqref{iter-4}  it is clear that
\EQS{\label{aim-2-1}
|x_i - x_0|\leq \sum_{k=1}^{i} |x_k - x_{k-1}|\leq A_4 \sum_{k=1}^{i} N_{k-1}^{-1}.
}
To conclude the proof, we have to estimate $\sum_{k=1}^{i} N_{k-1}^{-1}$. First, we notices that  from \eqref{dtPNvH} we have for all $t \in [t_i - A_3^{-1} N_i^{-2}, t_i]~(i\in \{0,1\dots,m-1\})$
\begin{align*}
&\| P_{N_i} (v,H)(t_i) \|_{L^\infty_x(\R^3)}-\| P_{N_i} (v,H)(t) \|_{L^\infty_x(\R^3)}\\
&\quad\leq\Big\| \int_t^{t_i}\partial_sP_{N_i} (v,H)(s)\ \mathrm{d}s\Big\|_{L^\infty_x(\R^3)}\\
&\quad\leq(t_i-t)\|\partial_tP_{N_i} (v,H)\|_{L^\infty_tL^\infty_x([0,1]\times\R^3)}\\
&\quad\leq A_3^{-2}A^2N_i.
\end{align*}
This, together with  \eqref{iter-1}, yields
$$\| P_{N_i} (v,H)(t) \|_{L^\infty_x(\R^3)}\gtrsim A_1^{-1} N_i. 
$$
Besides, 
applying \eqref{bts} and \eqref{iter-1}, we conclude that
\begin{align*}
 \sum_{i=0}^{m-1}{\int^{t_i}_{t_i-A_3^{-1} N_i^{-2}} A_1^{-1} N_i}\ \mathrm{d}t
 &\lesssim\sum_{i=0}^{m-1}{\int^{t_i}_{t_i-A_3^{-1}N_i^{-2}}
\|P_{N_i}(v,H)\|_{L^\infty_x(\R^3)}\ \mathrm{d}t}\\
  &\lesssim\|(v,H)\|_{L^1_t L^\infty_x([1-T_1,1]\times\R^3)}\\
&\lesssim A^4 T_1^{\frac{1}{2}},
\end{align*}
and thus
\EQS{\label{aim-2-2}
\sum_{i=0}^{m-1} N_i^{-1} \lesssim A_3^{2} T_1^{\frac{1}{2}}.
}
Here we have used the fact
$$
\bigcup\limits_{i=1}^{m-1}[t_i - A_3^{-1} N_i^{-2}, t_i]\subset [1-T_1,1],$$
and$$
(t_i - A_3^{-1} N_i^{-2}, t_i)\cap (t_j - A_3^{-1} N_j^{-2}, t_j)=\varnothing \quad \mbox{for}\quad 0\leq i\neq j\leq m-1.
$$
If $i\leq m-1$, then we immediately derive  \eqref{aim-2-0} from \eqref{aim-2-1} and \eqref{aim-2-2}. Now, if $i=m$, we notice from \eqref{aim-1-1} that  $N_i\lec  A_3T_1^{\frac{1}{2}}$, then we can extend the sum
\eqref{aim-2-2} to the final index $m$. Thus, \eqref{aim-2-0} is still valid. This completes the proof.
\end{proof}

\subsection{The proof of Proposition \ref{main-est}}\label{pf-N0}
\begin{proof}[The proof of Proposition \ref{main-est}]
Thanks to translation invariance, we assume $(t_0,x_0)=(0,0)$.
By Corollary \ref{v}, we can know that for any $A_4 N_0^{-2} \leq T_1 \leq A_4^{-1}$, there exists
\begin{equation*}
 (t_1,x_1)\in [-T_1, -A_3^{-3} T_1] \times B(0, A_4^{2} T_1^{\frac12}),
\end{equation*}
and
\begin{equation}\label{nando}
A_3^{-1} T_1^{-\frac12}\leq N_1 \leq A_3^{\frac12} T_1^{-\frac12},
\end{equation}
such that
$$ |P_{N_1} (v, H)(t_1,x_1)| \geq A_1^{-1} N_1.$$
The rest of the proof is divided into five steps.\\
{\bf Step 1}.\quad{\em  Transfer of concentration in Fourier space to physical space.} The purpose of this step is to prove the following estimate:
\begin{align}\label{stip}
\int_{B(0, A_4^{4} T_1^{\frac{1}{2}})} |\omega(t,x)|^2+|\nabla H(t,x)|^2+T_1^{-1}|H(t,x)|^2\ \mathrm{d}x \geq C_\ast A_4^{\frac{17}{2}} T_1^{-\frac{1}{2}}.
\end{align}
for all $t\in I''$. Here, $I''\subset\subset [t_1, t_1 + A_1^{-2} N_1^{-2}] \cap [-T_1, -A_3^{-3} T_1]$ is defined below.
From the Biot-Savart law
$$ P_{N_1} v(t_1,x_1) = (-\Delta)^{-1} P_{N_1} \nabla \times \widetilde{P}_{N_1} \omega(t_1,x_1),$$
and hence by \eqref{local}, \eqref{bern}, \eqref{able} we have
\begin{align*}
A_1^{-1} N_1
\leq&\|P_{N_1} (v,H)(t_1,x)\|_{L^\infty(B(x_1,\frac{A_1}{2N_1}))}\nonumber\\
 \leq& C_\ast N_1^{-1} \| \widetilde{P}_{N_1} (\omega, \nabla H)(t_1) \|_{L^\infty(B(x_1,\frac{A_1}{N_1}))} + C_\ast A_1^{-50} N_1^{-1} \| \widetilde{P}_{N_1}  (\omega, \nabla H)(t_1) \|_{L^\infty(\R^3)}\nonumber\\
\leq& C_\ast N_1^{-1} \| \widetilde{P}_{N_1} (\omega, \nabla H)(t_1)\|_{L^\infty(B(x_1,\frac{A_1}{N_1}))} + C_\ast AA_1^{-50} N_1 \nonumber\\
 \leq& C_\ast N_1^{-1} \| \widetilde{P}_{N_1} (\omega, \nabla H)(t_1) \|_{L^\infty(B(x_1,\frac{A_1}{N_1}))} + \frac12 A_1^{-1} N_1.
\end{align*}
Here and in the follow, $C_\ast>0$ is a  constant and independent of $A_1, N_1$ , which may vary from line to line. Similarly, we also have
\begin{align*}
A_1^{-1} N_1\leq&\|P_{N_1} \widetilde{P}_{N_1} H(t_1,x)\|_{L^\infty(B(x_1,\frac{A_1}{2N_1}))}\nonumber\\
 \leq& C_\ast\| \widetilde{P}_{N_1} H(t_1) \|_{L^\infty(B(x_1,\frac{A_1}{N_1}))} + C_\ast A_1^{-50} AN_1 \nonumber\\
\leq& C_\ast\| \widetilde{P}_{N_1} H(t_1)\|_{L^\infty(B(x_1,\frac{A_1}{N_1}))} + \frac12A_1^{-1} N_1.
\end{align*}
Thus, for some $x'_1, x'_2\in B(x_1,\frac{A_1}{N_1}) \subset B(0, A_4^{\frac52} T_1^{\frac12} )$ one has
$$
|\widetilde{P}_{N_1} (\omega(t_1,x'_1),\nabla H(t_1,x'_1))| \geq C_\ast A_1^{-1} N_1^2; \quad  |\widetilde{P}_{N_1}H(t_1,x'_2)|\geq C_\ast A_1^{-1}N_1.
$$
Notices that by using \eqref{bern}, \eqref{able} and \eqref{dtPNvH}, it follows that for any $(t,x)\in [-1,1]\times \R^3$
$$
 |\nabla \widetilde{P}_{N_1} (\omega,\nabla H)| \leq C_\ast A N_1^3; \quad \quad |\nabla \widetilde{P}_{N_1}H| \leq C_\ast A N_1^2
 $$
 and
 $$
 |\partial_t \widetilde{P}_{N_1} (\omega,\nabla H)| \leq C_\ast A^2 N_1^4;\quad \quad |\partial_t \widetilde{P}_{N_1}H| \leq C_\ast A^2 N_1^3.
$$
Then, for any
$(t,x) \in [t_1, t_1 + A_1^{-2} N_1^{-2}] \times B( x'_1, A_1^{-2} N_1^{-1} )$,
 we have
\begin{align*}
 C_\ast A_1^{-1} N_1^2-|\widetilde{P}_{N_1} (\omega,\nabla H)(t,x)|&\leq
 |\widetilde{P}_{N_1} (\omega,\nabla H)(t_1,x'_1)|-|\widetilde{P}_{N_1} (\omega,\nabla H)(t,x)|\nonumber\\
 &\leq|\nabla\widetilde{P}_{N_1} (\omega,\nabla H)||x-x'_1|+|\partial_t\widetilde{P}_{N_1} (\omega,\nabla H)||t-t_1|\nonumber\\
 &\leq C_\ast AN_1^3A_1^{-2}N_1^{-1}+C_\ast A^2N_1^4A_1^{-2}N_1^{-2}\nonumber\\
 &\leq \frac{C_\ast }{2}A_1^{-1}N_1^{2}.
\end{align*}
This implies that on $[t_1, t_1 + A_1^{-2} N_1^{-2}] \times B( x'_1, A_1^{-2} N_1^{-1})$
\begin{equation*}
 |\widetilde{P}_{N_1} (\omega,\nabla H)| \geq\frac{C_\ast}{2}A_1^{-1} N_1^2.
\end{equation*}
From this and \eqref{nando}, one has for $t\in [t_1, t_1 + A_1^{-2} N_1^{-2}]$
\EQS{\label{pn1w}
 \int_{B(0, A_4^{3} T_1^{\frac{1}{2}})} |\widetilde{P}_{N_1} (\omega, \nabla H)(t,x)|^2\ \mathrm{d}x
\geq C_\ast A_4^{\frac{17}2}T_1^{-\frac{1}{2}},
}
due to $ B( x'_1, A_1^{-2} N_1^{-1})\subset  B(0, A_4^{3} T_1^{\frac12}).$
Similarly, one also has
\begin{equation*}
 |\widetilde{P}_{N_1} H(t,x)| \geq \frac{C_\ast}{2}A_1^{-1} N_1,
\end{equation*}
for any  $(t,x) \in [t_1, t_1 + A_1^{-2} N_1^{-2}] \times B( x'_2, A_1^{-2} N_1^{-1})$, and further derives with help of \eqref{nando}
\begin{align*}
 \int_{B(0, A_4^{3} T_1^{\frac{1}{2}})} |\widetilde{P}_{N_1} H(t,x)|^2\ \mathrm{d}x
 \geq C_\ast A_4^{\frac{17}2}T_1^{\frac{1}{2}},\quad t\in [t_1, t_1 + A_1^{-2} N_1^{-2}].
\end{align*}
On the other hand,  Proposition \ref{iii} implies that there is an interval
$$I' \subset I=[t_1, t_1 + A_1^{-2} N_1^{-2}] \cap [-T_1, -A_3^{-3} T_1]\subset[-1,0],$$
with $|I'| = A^{-8}|I|$ such that for every $(t,x)\in I' \times \R^3$
\begin{equation}\label{omega-bound}
\begin{aligned}
&\|\nabla^j(v,H)(t,x) \|_{L^\infty_tL^\infty_x(I' \times \R^3)}\leq C_{\ast} A^{35} |I|^{-\frac{j+1}{2}}  \quad\text{for}\quad j=0,1,2.
 \end{aligned}
\end{equation}
Now we take
$$|I''|=\frac{1}{64C^2_{\ast}A^{70}}|I'|,$$
and derive by \eqref{omega-bound}
\begin{equation}\label{hawt}
\begin{aligned}
\|\nabla^j(v,H)(t,x) \|_{L^\infty_tL^\infty_x(I'' \times \R^3)}&\leq C_{\ast} A^{35-4(j+1)} |I'|^{-\frac{j+1}{2}}\\
&\quad =C_{\ast} A^{35-4(j+1)} (64C_{\ast}^2A^{70})^{-\frac{j+1}{2}}|I''|^{-\frac{j+1}{2}}\\
&\leq \frac{1}{8^{j+1}} |I''|^{-\frac{j+1}{2}}\quad\text{for}\quad j=0,1,2.
 \end{aligned}
\end{equation}
 Finally, by \eqref{local}, \eqref{pn1w} and \eqref{hawt} we have  for any $t \in I''$
\begin{align*}
C_\ast^{\frac12}A_4^{\frac{17}4} T_1^{-\frac{1}{4}}&\leq \|\widetilde{P}_{N_1} (\omega,\nabla H)(t,\cdot)\|_{L^2_x(B(0,A_4^{3}T_1^{\frac{1}{2}}))}
\nonumber\\
&\leq C_\ast\|(\omega,\nabla H)\|_{L^2_x(B(0,A_4^{4}T_1^{\frac{1}{2}}))}
+C_\ast A_4^{-50}\Big(A_4^{3}T_1^{\frac{1}{2}}\Big)^{\frac32}\|(\omega,\nabla H)(t,x) \|_{L^\infty_x(\R^3)}
\nonumber\\
&\leq C_\ast\|(\omega,\nabla H)\|_{L^2_x(B(0,A_4^{4}T_1^{\frac{1}{2}}))}
+C_\ast A_4^{-40}\Big(A_4^{3}T_1^{\frac{1}{2}}\Big)^{\frac32}T_1^{-1}\nonumber\\
&\leq C_\ast\|(\omega,\nabla H)\|_{L^2_x(B(0,A_4^{4}T_1^{\frac{1}{2}}))}
+\frac{C_\ast^{\frac12}}{2}A_4^{\frac{17}4} T_1^{-\frac{1}{4}},\nonumber\\
C_\ast^{\frac12}A_4^{\frac{17}4} T_1^{\frac{1}{4}}&\leq \|\widetilde{P}_{N_1} H\|_{L^2_x(B(0,A_4^{3}T_1^{\frac{1}{2}}))}
\nonumber\\
&\leq C_\ast\|H\|_{L^2_x(B(0,A_4^{4}T_1^{\frac{1}{2}}))}
+C_\ast A_4^{-50}\Big(A_4^{3}T_1^{\frac{1}{2}}\Big)^{\frac32}
\|H \|_{L^\infty_x(\R^3)}
\nonumber\\
&\leq C_\ast\|H\|_{L^2_x(B(0,A_4^{4}T_1^{\frac{1}{2}}))}
+C_\ast A_4^{-40}\Big(A_4^{3}T_1^{\frac{1}{2}}\Big)^{\frac32}T_1^{-\frac12}
\nonumber\\
&\leq C_\ast\|H\|_{L^2_x(B(0,A_4^{4}T_1^{\frac{1}{2}}))}
+\frac{C_\ast^{\frac12}}{2}A_4^{\frac{17}4} T_1^{\frac{1}{4}}.
\end{align*}
Combining the above calculations, we obtain the desired estimate \eqref{stip}. The following step use the Carleman inequality to transfer the concentration \eqref{stip} from the small scales $B(0, A_4^{4} T_1^{\frac{1}{2}})$ to large scales.

{\bf Step 2}.\quad {\em Large-scale propagation of concentration by using second Carleman inequality.}  The goal of this step to prove the following claim:
\begin{align}\label{step1main}
\int_{-T_1}^{-A_4^{-1}T_1} \int_{B(0,2R) \backslash B(0,\frac{R}{2})}  |\omega|^2+|\nabla H|^2+T_2^{-1}|H|^2\ \mathrm{d}y\mathrm{d}\tau \geq C_\ast A^{7}_4 T_1^{\frac12}e^{- \frac{A_5^{4}R^2}{T_1} }.
\end{align}
for all $A_4 N_0^{-2} \leq T_1 \leq A_4^{-1}, R\geq A_5 T_1^{\frac{1}{2}}$.
Denote by $I''\triangleq [t'_1-T_2, t'_1]$ for convenience, and introduce a new 15-component vector $W=(H, \omega, H_{x_1}, H_{x_2}, H_{x_3})$, where $H_{x_k}~(k=1,~2,~3)$ satisfy the  system
\begin{equation}\label{Hxkeq}
 \partial_tH_{x_k}-\Delta H_{x_k}=(H_{x_k}\cdot\nabla)v
 +(H\cdot\nabla)v_{x_k}-(v_{x_k}\cdot\nabla)H-(v\cdot\nabla)H_{x_k}.
\end{equation}
To obtain the desired result, we take the following scaling transformations
$$(v_\lambda, H_\lambda)(t,x)=\lambda (v,H)(t'_1-\lambda^2t,x_*+\lambda x),
\quad  (\omega_\lambda,J_\lambda) (t,x)=\lambda^2 (\omega,J)(t'_1-\lambda^2t,x_*+\lambda x),$$
$$((v_\lambda)_{x_k},(H_\lambda)_{x_k}) (t,x)=\lambda^2 (v_{(x_*+\lambda x)_k}, H_{(x_*+\lambda x)_k})(t'_1-\lambda^2t,x_*+\lambda x),$$
with $\lambda=\sqrt{T_2}$.
It is clear that $(v_\lambda, H_\lambda)$ is also a solution of system $\eqref{mhdeq}$ in $[0,1]\times \R^3$, and, by $\eqref{mhdeq}_2$, $\eqref{wJeq}_1$, \eqref{Hxkeq} and \eqref{hawt}, fulfils
\begin{align*}
|\partial_t H_\lambda+\Delta H_\lambda|\leq &|\nabla H_\lambda||v_\lambda|+||H_\lambda|\nabla v_\lambda|\leq\frac{1}{8}|\nabla H_\lambda|+\frac{1}{64} |H_\lambda|,\nonumber\\
|\partial_t \omega_\lambda+\Delta \omega_\lambda|\leq &|\nabla \omega_\lambda||v_\lambda|+|\omega_\lambda||\nabla v_\lambda|+|\nabla J_\lambda||H_\lambda|+|J_\lambda||\nabla H_\lambda|\nonumber\\
\leq& \frac{1}{8}|\nabla \omega_\lambda|+\frac{1}{64} |\omega_\lambda|+\frac{1}{512}|H_\lambda|+\frac{1}{64}|\nabla H_\lambda|,
\nonumber\\
|\partial_t (H_\lambda)_{x_k}+\Delta (H_\lambda)_{x_k}|\leq&|\nabla v_\lambda||(H_\lambda)_{x_k}|+|H_\lambda||\nabla (v_\lambda)_{x_k}|+|\nabla H_\lambda||(v_\lambda)_{x_k}|+|v_\lambda||\nabla (H_\lambda)_{x_k}|\nonumber\\
\leq& \frac{1}{64}|(H_\lambda)_{x_k}|+\frac{1}{512}|H_\lambda|+\frac{1}{64}|\nabla H_\lambda|+\frac{1}{64}|\nabla (H_\lambda)_{x_k}|,
\end{align*}
i.e.,
\begin{align*}
|\partial_t W_\lambda+\Delta W_\lambda|\leq \frac{1}{4} |W_\lambda|+\frac{1}{2}|\nabla W_\lambda|\quad\text{on}\quad {[0,1] \times \R^3}.
\end{align*}
with $W_\lambda( t, x )\triangleq(H_\lambda, \omega_\lambda, (H_\lambda)_{x_1}, (H_\lambda)_{x_2}, (H_\lambda)_{x_3})$. Therefore  \eqref{weifen-3} is satisfied with $C_{carl}=4$.
 We now apply Lemma \ref{carl-second} on the slab $[0,1] \times B(0,r)$ with $r \triangleq \frac{A_5 |x_*|}{\sqrt{T_2}}, |x_*| \geq A_5 T_1^{\frac{1}{2}}, t_0 \triangleq \frac{1}{20000}$, and $t_1 \triangleq A^{-4}_5 $,
to concluded that
\begin{align}\label{1-carleman}
&\int_{\frac{1}{20000}}^{\frac{1}{10000}} \int_{|x|\leq \frac{r}{2}}\left(|W_\lambda(t, x)|^2+|\nabla W_\lambda(t, x)|^2\right)e^{-\frac{|x|^2}{4t}}\ \mathrm{d}x \mathrm{d}t\nonumber\\
&\quad\leq C_{\ast} e^{-\frac{40A_5|x_*|^2}{T_2}}\int_0^{1} \int_{|x|\leq r} \left(|W_\lambda( t, x)|^2
+|\nabla W_\lambda( t, x)|^2\right)\ \mathrm{d}x \mathrm{d}t\nonumber\\
&\quad\quad+C_{\ast}A_5^{6} \left(\frac{A_5^4e}{20000}\right)^{\frac{20000C_{\ast}A_5^2|x_*|^2}{T_2}}\int_{|x|\leq r} |W_\lambda(0, x)|^2e^{-\frac{A_5^4|x|^2}{4}}
\ \mathrm{d}x.
\end{align}
Let $y=x_*+\lambda x$ and $\tau=t'_1-\lambda^2 t$, then \eqref{1-carleman} can be rewritten as
\begin{align}\label{ZXY1}
&Z_1 \triangleq \int_{t'_1-\frac{T_2}{10000}}^{t'_1-\frac{T_2}{20000}} \int_{B(x_*, \frac{A_5 |x_*|}{2})} \Big[(T_2)^{-\frac32} |H(\tau,y)|^2
+(T_2)^{-\frac12} |\omega(\tau,y)|^2\nonumber\\
&\quad\quad+(T_2)^{-\frac12} |H_{y_k}(\tau,y)|^2\Big]
e^{-\frac{|y-x_*|^2}{4(t'_1-\tau)}}\ \mathrm{d}y \mathrm{d}\tau\nonumber\\
&\quad\leq C_\ast e^{-\frac{40A_5^2|x_*|^2}{T_2}}X_1+C_\ast A_5^{6} \left(\frac{A_5^4e}{20000}\right)^{\frac{20000C_{\ast}A_5^2|x_*|^2}{T_2}}Y_1\nonumber\\
&\quad\leq C_\ast e^{-\frac{40A_5^2|x_*|^2}{T_2}}X_1+C_\ast e^{\frac{A_5^{\frac52}|x_*|^2}{T_2}}Y_1,
\end{align}
where
\begin{align*}
&X_1\triangleq \int_{t'_1-T_2}^{t'_1} \int_{B(x_*, A_5 |x_*|)}
\Big[T_2^{-\frac12}\big(T_2^{-1} |H(\tau,y)|^2+|\omega( \tau,y)|^2+|H_{y_k}(\tau,y)|^2\big)\\
&\quad\quad\quad+(T_2)^{-\frac12}|\nabla H(\tau,y)|^2
+(T_2)^{\frac12}|\nabla \omega(\tau,y)|^2+(T_2)^{\frac12}| \nabla H_{y_k}(\tau,y)|^2\Big]\ \mathrm{d}y \mathrm{d}\tau\nonumber\\
&Y_1\triangleq\int_{B(x_*, A_5 |x_*|)}
\Big[(T_2)^{-\frac12}|H(t'_1,y)|^2+(T_2)^{\frac12}|\omega(t'_1,y)|^2+(T_2)^{\frac12}|H_{y_k}(t'_1 ,y)|^2\Big]e^{-\frac{A_5^4|y-x_*|^2}{4T_2}}\ \mathrm{d}y.
\end{align*}
First, from \eqref{stip}
\begin{align*}
Z_1 &\geq (T_2)^{-\frac12}\int_{t'_1-\frac{T_2}{10000}}^{t'_1-\frac{T_2}{20000}} \int_{B(x_*, 2|x_*|)}\Big[|\omega(\tau,y)|^2+| \nabla H(\tau,y)|^2+T_2^{-1}|H(\tau,y)|^2\Big] e^{-\frac{|y-x_*|^2}{4(t'_1-\tau)}}\ \mathrm{d}y \mathrm{d}\tau\nonumber\\
&\geq (T_2)^{-\frac12}e^{-\frac{10000|x_*|^2}{T_2}}
\int_{t'_1-\frac{T_2}{10000}}^{t'_1-\frac{T_2}{20000}}
\int_{B(0, A_4^{4}T_1^{\frac{1}{2}})}  \Big[|\omega(\tau,y)|^2+| \nabla H(\tau,y)|^2+T_2^{-1}|H(\tau,y)|^2\Big] \ \mathrm{d}y \nonumber\\
&\geq T_2^{-\frac{1}{2}}\cdot\frac{T_2}{20000}\cdot C_\ast A_4^{\frac{17}2}T_1^{-\frac{1}{2}}e^{- \frac{10000|x_*|^2}{T_2} }
\geq C_\ast A_4^{8}e^{- \frac{10000|x_*|^2}{T_2} },
\end{align*}
secondly, by \eqref{hawt}
\begin{align*}
C_\ast e^{-\frac{40A_5^2|x_*|^2}{T_2}}X_1 &\leq C_\ast e^{-\frac{40|A_5x_*|^2}{T_2}}\left(\frac{|A_5x_*|}{\sqrt{T_2}}\right)^3 \leq\frac{C_\ast A_4^{8}}{2}e^{-\frac{10000|x_*|^2}{T_2}}.
\end{align*}
Therefore, by \eqref{ZXY1} and \eqref{hawt}, we conclude that
$ Y_1 \geq C_\ast A_4^{8}e^{- \frac{A_5^3|x_*|^2}{T_2} },$
 and then
\begin{align*}
&C_\ast A_4^{8}e^{- \frac{A_5^3|x_*|^2}{T_2} }\\
&\quad\leq
\int_{B(x_*,A_5|x_*|)} |\widetilde{W}(t'_1,y)|e^{-\frac{A_5^{4} |y-x_*|^2 } {4 T_2}}\ \mathrm{d}y\nonumber\\
&\quad\leq\int_{B(x_*,\frac{|x_*|}{2})} |\widetilde{W}(t'_1,y)|\ \mathrm{d}y
+\int_{B(x_*,A_5|x_*|) \backslash B(x_*,\frac{|x_*|}{2})} |\widetilde{W}(t'_1,y)|e^{-\frac{A_5^{4} |y-x_*|^2 } {4 T_2}}\ \mathrm{d}y\nonumber\\
&\quad\leq \int_{B(x_*,\frac{|x_*|}{2})} |\widetilde{W}(t'_1,y)|\ \mathrm{d}y +C_\ast \Big(\frac{A_5^{2}|x_*|^2}{T_2}\Big)^{\frac32}e^{- \frac{A_5^{4} |x_*|^2}{16T_2} }\nonumber\\
&\quad\leq T_2^{\frac{1}{2}}\int_{B(x_*,\frac{|x_*|}{2})} |\omega(t'_1,y)|^2+|\nabla H(t'_1,y)|^2+T_2^{-1}|H(t'_1,y)|^2|\ \mathrm{d}y +\frac{1}{2}C_\ast A_4^{8}e^{- \frac{A_5^3|x_*|^2}{T_2} },
\end{align*}
with
\begin{align*}
&|\widetilde{W}(t'_1,y)|\triangleq
T_2^{\frac12}(|\omega(t'_1,y)|^2+|\nabla H(t'_1,y)|^2+T_2^{-1}|H(t'_1,y)|^2).
\end{align*}
Thus,
\EQS{\label{2R-R-0}
\int_{B(x_*,\frac{|x_*|}{2})}|\omega(t'_1,y)|^2+|\nabla H(t'_1,y)|^2+T_2^{-1}|H(t'_1,y)|^2\ \mathrm{d}y\geq \frac{C_\ast}{2} A_4^{8}e^{- \frac{A_5^3|x_*|^2}{T_2} }T_2^{-\frac{1}{2}}.
}
Now for any $\tau\in [t'_1-\frac{T_2}{8},t'_1]$, repeating the above procedure verbatim with $t'_1, I''$ replaced by $\tau,  [t'_1-T_2, \tau]$, respectively, we can also derive the estimate \eqref{2R-R-0}. Denote by $|x_*|=R$,  one has for any $\tau\in[t'_1-\frac{T_2}{8},t'_1]$
\begin{align}\label{2R-R}
 \int_{B(0,2R) \backslash B(0,\frac{R}{2})} |\omega(\tau,y)|^2+|\nabla H(\tau,y)|^2+T_2^{-1}|H(\tau,y)|^2\ \mathrm{d}y\geq\frac{C_{\ast}}{2}A_4^{8}T_2^{-\frac12}e^{- \frac{A_5^3|x_*|^2}{T_2}}
\end{align}
due to
$$
B(x_*,\frac{|x_*|}{2})\subset B(0,2R) \backslash B(0,\frac{R}{2}).
$$
On the other hand, since $[t'_1-T_2,t'_1]\subset[-T_1,-A_3^{-3}T_1]$, we have
$$-T_1\leq-T_1+T_2\leq t'_1\leq -A_3^{-3}T_1\leq-A_4^{-1}T_1.$$
Then, by integrating \eqref{2R-R} respect to $\tau$ on $[-T_1,-A_4^{-1}T_1]$, we finally conclude \eqref{step1main}.

\medskip
{\bf Step 3}.\quad {\em Forward propagation of concentration via first Carleman inequality.} The goal of this step and Step 4 below is to prove the following estimate
\EQS{\label{W0x2}
 \int_{5 \widetilde{R} \leq |x| \leq \frac{3A_6 \widetilde{R}}{10}} T_3^{-1}|H(0,\theta)|^2
+|\omega(0,\theta)|^2+|H_{\theta_k}(0,\theta)|^2\ \mathrm{d}\theta \geq C_{\ast}e^{-e^{A_6^{10}}} T_3^{-\frac{1}{2}}
}
for all
\begin{equation*}
 A_4^2 N_0^{-2} \leq T_3 \leq A_4^{-1}.
\end{equation*}
Notices that by Proposition \ref{vi}, there exist absolute constants $\varepsilon'_\ast=\frac1{4A_6^6}$, $R_0=\frac{A_6}{16}$ and $\mu'=\frac{\log(4A_6^6)}{\log A}$
such that on the cylindrical annulus
$$ \Omega := \Big\{ (t,x) \in [-T_3,0] \times \R^3: \widetilde{R}\leq |x|\leq A_6^6 \widetilde{R} \Big\},$$
one has the estimates
\begin{equation}\label{annv}
\begin{aligned}
\|\nabla^j(v,H)(t,x) \|_{L^\infty_tL^\infty_x(\Omega)}\leq C_{\ast} A_6^{-2} T_3^{-\frac{j+1}{2}}\quad\text{for}\quad j=0,1,2,
 \end{aligned}
\end{equation}
with
\begin{equation}\label{rb0}
 A_6 T_3^{\frac{1}{2}} \leq \widetilde{R} \leq e^{A_6^{7}} T_3^{\frac{1}{2}}.
\end{equation}
As Step 1, we take a transformation
\EQs{
&(v_\mu,H_\mu) (t,x)=\mu (v,H)(-\mu^2t,\mu x), \quad(\omega_\mu,J_\mu) (t,x)=\mu^2 (\omega,J)(-\mu^2t,\mu x),\\&
((v_\mu)_{x_k}, (H_\mu)_{x_k} )(t,x)=\mu^2 (v_{\mu x_k},H_{\mu x_k})(-\mu^2t,\mu x),\quad \mu=\sqrt{T_3}.
}
It is clear that $v_\mu,H_\mu$ is a solution of system $\eqref{mhdeq}$ in $[0,1]\times \R^3$ such that by \eqref{annv}
\EQs{
|\partial_t H_\mu+\Delta H_\mu|\leq |\nabla H_\mu||v_\mu|+||H_\mu|\nabla v_\mu|
\leq C_{\ast}A_6^{-2}(|\nabla H_\mu|+|H_\mu|);}
\EQs{
|\partial_t \omega_\mu+\Delta \omega_\mu|&\leq |\nabla \omega_\mu||v_\mu|+|\omega_\mu||\nabla v_\mu|+|\nabla J_\mu||H_\mu|+|J_\mu||\nabla H_\mu|
\\&\leq C_{\ast}A_6^{-2}(|\nabla \omega_\mu|+ |\omega_\mu|+|H_\mu|+|\nabla H_\mu|);
}
and
\EQs{
|\partial_t (H_\mu)_{x_k}+\Delta (H_\mu)_{x_k}|&\leq|\nabla v_\mu||(H_\mu)_{x_k}|+|H_\mu||\nabla (v_\mu)_{x_k}|+|\nabla H_\mu||(v_\mu)_{x_k}|+|v_\mu||\nabla (H_\mu)_{x_k}|
\\&\leq C_{\ast}A_6^{-2}( |(H_\mu)_{x_k}|+|H_\mu|+|\nabla H_\mu|+|\nabla (H_\mu)_{x_k}|),
}
for $(t,x)\in [0,1]\times \Big(B(0,\frac{A_6 \widetilde{R}}{10\sqrt{T_3}}) \backslash B(0,\frac{10 \widetilde{R}}{\sqrt{T_3}})\Big)$. That is
\begin{align*}
|\partial_tW_\mu+\Delta W_\mu|\leq C_{\ast}A_6^{-1}|\nabla W_\mu|+C_{\ast}^2A_6^{-2}|W_\mu|
\end{align*}
with
$$
W_\mu( t, x )\triangleq(H_\mu, \omega_\mu, (H_\mu)_{x_1}, (H_\mu)_{x_2}, (H_\mu)_{x_3}).
$$
We now apply Lemma \ref{carl-first} on the slab $[0,\frac1{A_2}] \times (B(0,r_+) \backslash B(0,r_-))$ with
$C_{carl}=A_2$, $r_- \triangleq \frac{10 \widetilde{R}}{\sqrt{T_3}}$, $r_+ \triangleq\frac{A_6 \widetilde{R}}{10\sqrt{T_3}}$, to
 conclude that
\begin{align}\label{1-carleman-1}
&\int_{0}^{\frac{1}{4A_2}} \int_{\frac{100 \widetilde{R}}{\sqrt{T_3}}\leq|x|\leq \frac{ A_6 \widetilde{R}}{20\sqrt{T_3}}}|W_\mu(t, x)|^2+|\nabla W_\mu(t, x)|^2\ \mathrm{d}x \mathrm{d}t\nonumber\\
&\quad\leq C_{\ast}A_2^2 e^{-\frac{A_6\widetilde{R}^2}{4T_3}}\Big(\int_0^{\frac1{A_2}} \int_{\frac{10\widetilde{R}}{\sqrt{T_3}}\leq|x|\leq\frac{A_6 \widetilde{R}}{10\sqrt{T_3}} }e^{2|x|^2} \left(A_2|W_\mu( t, x)|^2
+|\nabla W_\mu( t, x)|^2\right)\ \mathrm{d}x\mathrm{d}t\nonumber\\
&\quad\quad+e^{\frac{A_6^{2}\widetilde{R}^2}{50T_3}} \int_{\frac{10 \widetilde{R}}{\sqrt{T_3}}\leq|x|\leq\frac{A_6 \widetilde{R}}{10\sqrt{T_3}} } |W_\mu(0, x)|^2\ \mathrm{d}x\Big).
\end{align}
Let $z=\mu x$ and $s=-\mu^2 t$, then \eqref{1-carleman-1} can be rewritten as
\begin{align}\label{Z1X1Y1}
&Z_2 \triangleq \int_{-\frac{T_3}{4A_2}}^{0} \int_{100\widetilde{R}\leq|z|\leq\frac{A_6\widetilde{R}}{20}} T_3^{-1} |H(s,z)|^2
+ |\omega(s,z)|^2+ |H_{z_k}(s,z)|^2
\ \mathrm{d}z \mathrm{d}s\nonumber\\
&\quad\leq C_{\ast}A_2^{3} e^{-\frac{A_6\widetilde{R}^2}{4T_3}}X_2+C_{\ast}e^{e^{A_6^{9}}}T_3Y_2,
\end{align}
where
\begin{align*}
&X_2\triangleq \int_{-\frac{T_3}{A_2}}^{0} \int_{10\widetilde{R}\leq|z|\leq \frac{A_6\widetilde{R}}{10}}
e^{\frac{2|z|^2}{T_3}} \Big(T_3^{-1} |H(s, z)|^2+|\omega(s, z)|^2\nonumber\\
&\quad\quad+|H_{z_k}( s,z)|^2+|\nabla H(s, z)|^2
+T_3|\nabla \omega(s, z)|^2+T_3| \nabla H_{z_k}(s, z)|^2\Big)\ \mathrm{d}z \mathrm{d}s,\nonumber\\
&Y_2\triangleq\int_{10\widetilde{R}\leq|z|\leq \frac{A_6\widetilde{R}}{10}}
T_3^{-1}|H(0,z)|^2+|\omega(0,z)|^2+|H_{z_k}(0,z)|^2\ \mathrm{d}z.
\end{align*}
From \eqref{step1main} with $R=200\widetilde{R}$ and $\widetilde{T}_1=\frac{T_3}{4A_2}$, we have
 \begin{align*}
Z_2 &\geq \frac{1}{4} A_2^{-1}\int_{-\widetilde{T}_1}^{0} \int_{\frac{R}{2} \leq |z| \leq \frac{A_6R}{4000}}\widetilde{T}_1^{-1} |H(s,z)|^2+ |\omega(s,z)|^2+ |H_{z_k}(s,z)|^2
\ \mathrm{d}z \mathrm{d}s\nonumber\\
&\geq\frac14A_2^{-1}
 \int_{-\widetilde{T}_1}^{-A_4^{-1}\widetilde{T}_1} \int_{B(0,2R) \backslash B(0,\frac{R}{2})} \widetilde{T}_1^{-1} |H(s,z)|^2+ |\omega(s,z)|^2+ |H_{z_k}(s,z)|^2
\ \mathrm{d}z \mathrm{d}s\nonumber\\
&\geq \frac{C_\ast}{4}A_2^{-1}A_4^7\widetilde{T}_1^{\frac12}e^{- \frac{A_5^{4}R^2}{\widetilde{T}_1} }\geq C_\ast T_3^{\frac12}e^{- \frac{A_6^{\frac12}\widetilde{R}^2}{T_3} }.
 \end{align*}
This together with \eqref{Z1X1Y1} yields
$$
C_{\ast}T_3^{\frac12}e^{- \frac{A_6^{\frac12}\widetilde{R}^2}{T_3} }\leq
C_{\ast}A_2^{3} e^{-\frac{A_6\widetilde{R}^2}{4T_3}}X_2+C_{\ast}e^{e^{A_6^{9}}}T_3Y_2.
$$
Thus we either have
\begin{equation}\label{option-1}
X_2\geq C_{\ast} e^{\frac{A_6^{\frac12} \widetilde{R}^2}{T_3}} T_3^{\frac{1}{2}},
\end{equation}
or
\begin{equation}\label{option-2}
Y_2 \geq C_{\ast}e^{-e^{A_6^{10}}} T_3^{-\frac{1}{2}}.
\end{equation}
It is clear that \eqref{option-2} implies our desired estimate \eqref{W0x2}. Therefore, we only consider the case that the bounded \eqref{option-1} holds which can be rewritten as
\begin{align*}
C_{\ast} e^{\frac{A_6^{\frac12} \widetilde{R}^2}{T_3}} T_3^{\frac{1}{2}}
&\leq  \sum_{k=0}^{\lceil\mathrm{log}_2\frac{A_6}{200}\rceil}\int_{-\frac{T_3}{A_2}}^0 \int_{(10 \widetilde{R})\cdot2^k \leq |z| \leq (10\widetilde{R})\cdot2^{k+1}} e^{\frac{2|z|^2}{T_3}} \Big(T_3^{-1} |H(s, z)|^2+|\omega(s, z)|^2\nonumber\\
&\quad\quad+|H_{z_k}( s,z)|^2+|\nabla H(s, z)|^2
+T_3|\nabla \omega(s, z)|^2+T_3| \nabla H_{z_k}(s, z)|^2\Big)\ \mathrm{d}z \mathrm{d}s,
\end{align*}
From the pigeonhole principle, there exists $k_0\in\{0,1,\ldots,\lceil\mathrm{log}_2\frac{A_6}{200}\rceil\}$
and $10\widetilde{R}\leq \widetilde{R}_1=10\widetilde{R}\cdot2^{k_0}\leq \frac{A_6 \widetilde{R}}{10}$ such that
\begin{align*}
 &\int_{-\frac{T_3}{A_2}}^0 \int_{\widetilde{R}_1 \leq |z| \leq 2\widetilde{R}_1} e^{\frac{2|z|^2}{T_3}} \Big(T_3^{-1} |H(s, z)|^2+|\omega(s, z)|^2\nonumber\\
&\quad+|H_{z_k}( s,z)|^2+|\nabla H(s, z)|^2
+T_3|\nabla \omega(s, z)|^2+T_3| \nabla H_{z_k}(s, z)|^2\Big)\ \mathrm{d}z \mathrm{d}s,\nonumber\\
&\geq\frac{C_{\ast}}{\lceil\mathrm{log}_2\frac{A_6}{200}\rceil}
T_3^{\frac{1}{2}} e^{\frac{A_6^{\frac{1}{2}} \widetilde{R}^2}{T_3}} \geq C_{\ast}
T_3^{\frac{1}{2}} e^{\frac{A_6^{-\frac{3}{2}} \widetilde{R}_1^2}{T_3}} ,
 \end{align*}
due to $\widetilde{R}\geq\frac{10\widetilde{R}_1}{A_6}$. From this, we immediately derive
 \begin{align}\label{R2R}
 &\int_{-\frac{T_3}{A_2}}^0 \int_{\widetilde{R}_1 \leq |x| \leq 2\widetilde{R}_1}  \Big(T_3^{-1} |H(s, z)|^2+|\omega(s, z)|^2+|H_{z_k}( s,z)|^2+|\nabla H(s, z)|^2\nonumber\\
&\quad
+T_3|\nabla \omega(s, z)|^2+T_3| \nabla H_{z_k}(s, z)|^2\Big)\ \mathrm{d}z \mathrm{d}s\geq C_{\ast}
T_3^{\frac{1}{2}} e^{-\frac{10\widetilde{R}_1^2}{T_3}},
 \end{align}
because of $e^{\frac{2|z|^2}{T_3}}\leq e^{\frac{8 \widetilde{R}_1^2}{T_3}}$. On the other hand, from \eqref{annv}
\begin{align*}
&\int_{-T_3e^{-\frac{20{\widetilde{R}_1}^2}{T_3}}}^0 \int_{\widetilde{R}_1 \leq |x| \leq 2\widetilde{R}_1} \Big(T_3^{-1} |H(s, z)|^2+|\omega(s, z)|^2+|H_{z_k}( s,z)|^2+|\nabla H(s, z)|^2\nonumber\\
&\quad
+T_3|\nabla \omega(s, z)|^2+T_3| \nabla H_{z_k}(s, z)|^2\Big)\ \mathrm{d}z \mathrm{d}s\nonumber\\
&\leq C_{\ast}A_6^{-2} \Big(\frac{{\widetilde{R}_1}^2}{T_3}\Big)^{\frac32}
{T_3^\frac12}e^{-\frac{20{\widetilde{R}_1}^2}{T_3}}\leq \frac12C_{\ast}
T_3^{\frac{1}{2}} e^{-\frac{10\widetilde{R}_1^2}{T_3}}.
\end{align*}
Hence, from \eqref{R2R}
\begin{align*}
&\frac12C_{\ast}
T_3^{\frac{1}{2}} e^{-\frac{10\widetilde{R}_1^2}{T_3}}\nonumber\\
 &\quad\leq
 \int_{-\frac{T_3}{A_2}}^{-T_3e^{-\frac{20{\widetilde{R}_1}^2}{T_3}}} \int_{\widetilde{R}_1 \leq |x| \leq 2\widetilde{R}_1} \Big(T_3^{-1} |H(s, z)|^2+|\omega(s, z)|^2+|H_{z_k}( s,z)|^2\nonumber\\
&\quad\quad+|\nabla H(s, z)|^2
+T_3|\nabla \omega(s, z)|^2+T_3| \nabla H_{z_k}(s, z)|^2\Big)\ \mathrm{d}z \mathrm{d}s\nonumber\\
 &\quad\leq \sum_{\lambda=0}^{\lceil\mathrm{log}_2({A_2}^{-1}e^{
 \frac{20{\widetilde{R}_1}^2}{T_3}})-1\rceil}
 \int_{-[T_3e^{-\frac{20{\widetilde{R}_1}^2}{T_3}}]\cdot2^{\lambda+1}}
 ^{-[T_3e^{-\frac{20{\widetilde{R}_1}^2}{T_3}}]\cdot2^\lambda} \int_{\widetilde{R}_1 \leq |x| \leq 2\widetilde{R}_1}\Big(T_3^{-1} |H(s, z)|^2\nonumber\\
&\quad\quad+|\omega(s, z)|^2+|H_{z_k}( s,z)|^2+|\nabla H(s, z)|^2
+T_3|\nabla \omega(s, z)|^2+T_3| \nabla H_{z_k}(s, z)|^2\Big)\ \mathrm{d}z \mathrm{d}s .
\end{align*}
by a further application of the pigeonhole principle,  there exists $\lambda_0\in\{0,1,\ldots,\lceil\mathrm{log}_2(A_2^{-1}
e^{\frac{20{\widetilde{R}_1}^2}{T_3}})-1\rceil\}$ and a locate time scale $t_3=e^{-\frac{20{\widetilde{R}_1}^2}{T_3}}T_3\cdot2^{\lambda_0}$ such that
\begin{equation}\label{tang}
e^{\frac{-20\widetilde{R}_1^2}{T_3}}T_3 \leq t_3 \leq \frac{T_3}{A_2},
\end{equation}
then
\begin{align*}
 &\int_{-2t_3}^{-t_3} \int_{\widetilde{R}_1 \leq |x| \leq 2\widetilde{R}_1} \Big(T_3^{-1} |H(s, z)|^2+|\omega(s, z)|^2+|H_{z_k}( s,z)|^2+|\nabla H(s, z)|^2\nonumber\\
&\quad
+T_3|\nabla \omega(s, z)|^2+T_3| \nabla H_{z_k}(s, z)|^2\Big)\ \mathrm{d}z \mathrm{d}s \geq C_{\ast}T_3^{\frac{1}{2}}e^{- \frac{15\widetilde{R}_1^2}{T_3}}.
\end{align*}
We cover the annulus $B(0,2\widetilde{R}_1) \backslash B(0,\widetilde{R}_1)$ with
$$C_{\ast}\frac{8\widetilde{R}_1^3-\widetilde{R}_1^3}{t_3^{\frac32}}\leq
7C_{\ast}\frac{\widetilde{R}_1^3}{T_3^{\frac32}}e^{\frac{30\widetilde{R}_1^2}{T_3}}
\leq
7C_{\ast}e^{\frac{31\widetilde{R}_1^2}{T_3}},$$
 balls of radius $t_3^{\frac{1}{2}}$ and apply the pigeonhole principle to find that there exists $x_3\in \{x~:~\widetilde{R}_1 \leq |x| \leq 2\widetilde{R}_1\}$ such that
 \begin{align}\label{cor}
 &\int_{-2t_3}^{-t_3} \int_{B(x_3, t_3^{\frac{1}{2}})} T_3^{-1} |H|^2+|\omega|^2+|\nabla H|^2+T_3\Big(|\nabla \omega|^2+| \nabla H_{z_k}|^2\Big)\ \mathrm{d}z \mathrm{d}s \geq C_{\ast}T_3^{\frac{1}{2}}e^{- \frac{46\widetilde{R}_1^2}{T_3}}.
\end{align}
In the Step 4 below, we continue to  apply the Carleman inequality to transfer the above low bound to the time 0.

{\bf Step 4}.\quad {\em The concentration continues to propagate on a large-scale
by using second Carleman inequality.}\quad
In this step, we will continue to use the second Carleman inequality to derive \eqref{W0x2} if the case \eqref{option-1} holds.
As Step 1, we take a transformation
$$(v_\nu, H_\nu) (t,x)=\nu (v,H)(-\nu^2t,x_3+\nu x),
\quad (\omega_\nu, J_\nu)(t,x)=\nu^2 (\omega,J)(-\nu^2t,x_3+\nu x),
$$
$$((v_\nu)_{x_k},(H_\nu)_{x_k}) (t,x)=\nu^2 (v_{(x_3+\nu x)_k},H_{(x_3+\nu x)_k})(-\nu^2t,x_3+\nu x),\quad\nu=\sqrt{20000t_3}.$$
It is clear that $v_\nu, H_\nu$ is a solution of system \ref{mhdeq} in $[0,1]\times \R^3$.
Notice that due to \eqref{tang} , one has
\begin{align*}
&[-\nu^2,0]=[-20000t_3,0]\subset [-\frac{20000T_3}{A_2},0]\subset [-T_3,0],\\
&\nu r=\frac{A_2^{\frac{1}{4}}\widetilde{R}_1\sqrt{20000t_3}}{\sqrt{T_3}}\leq
\frac{A_2^{\frac{1}{4}}\sqrt{20000}\widetilde{R}_1}{A_2^{\frac{1}{2}}}\leq \frac{\widetilde{R}_1}{2}\leq \frac{|x_3|}{2},\quad \text{with}\quad r \triangleq  \frac{A_2^{\frac{1}{4}}\widetilde{R}_1}{\sqrt{T_3}},
\\
&B(x_3,\nu r)\subset B(x_3,\frac{|x_3|}{2})\subset \Big\{\frac{\widetilde{R}_1}{2}\leq |y|\leq 3\widetilde{R}_1\Big\}\subset \Big\{5\widetilde{R}\leq |y|\leq \frac{3A_6\widetilde{R}}{10}\Big\}
\subset \Big\{\widetilde{R}\leq |y|\leq A_6^6\widetilde{R}\Big\},
\end{align*}
which implies
\begin{align*}
&\|\nabla^j(v_\nu,H_\nu)(t,x)\|_{L^\infty_tL^\infty_x([0,1]\times B(0,r))}\\
&\quad=\nu^{j+1}
\|\nabla^j(v,H)(s,y)\|_{L^\infty_sL^\infty_y(([-\nu^2,0]\times B(x_3,\nu r))}\\
&\quad\leq \nu^{j+1}
\|\nabla^j(v,H)(s,y)\|_{L^\infty_sL^\infty_y(\Omega)}\leq
 C_\ast A_6^{-2}A_2^{-\frac{j+1}{2}},\quad j=0,1,2.
\end{align*}
Thus, we have
\EQs{
|\partial_t H_\nu+\Delta H_\nu|\leq |\nabla H_\nu||v_\nu|+||H_\nu|\nabla v_\nu|
\leq C_{\ast}A_6^{-2}A_2^{-\frac12}(|\nabla H_\nu|+|H_\nu|);}
\EQs{
|\partial_t \omega_\nu+\Delta \omega_\nu|&\leq |\nabla \omega_\nu||v_\mu|+|\omega_\nu||\nabla v_\nu|+|\nabla J_\nu||H_\nu|+|J_\nu||\nabla H_\nu|
\\&\leq C_{\ast}A_6^{-2}A_2^{-\frac12}(|\nabla \omega_\nu|+ |\omega_\nu|+|H_\nu|+|\nabla H_\nu|);
}
and
\EQs{
|\partial_t (H_\nu)_{x_k}+\Delta (H_\nu)_{x_k}|&\leq|\nabla v_\nu||(H_\nu)_{x_k}|+|H_\nu||\nabla (v_\nu)_{x_k}|+|\nabla H_\nu||(v_\nu)_{x_k}|+|v_\nu||\nabla (H_\nu)_{x_k}|
\\&\leq C_{\ast}A_6^{-2}A_2^{-\frac12}( |(H_\nu)_{x_k}|+|H_\nu|+|\nabla H_\nu|+|\nabla (H_\nu)_{x_k}|),
}
for $(t,x)\in [0,1]\times B(0,r) $. That is
$$
|\partial_tW_\nu+\Delta W_\nu|\leq C_{\ast}A_6^{-1}A_2^{-\frac14}|\nabla W_\mu|+C_{\ast}^2A_6^{-2}A_2^{-\frac12}|W_\mu|,
$$
with
$$
W_\nu( t, x )\triangleq(H_\nu, \omega_\nu, (H_v)_{x_1}, (H_\nu)_{x_2}, (H_\nu)_{x_3}).
$$
We now apply Lemma \ref{carl-second} on the slab $[0,1] \times B(0,r)$ with
$C_{carl}=1$,  $t_0=t_1=\frac{1}{20000}$, to conclude that
\begin{equation*}\label{zpp}
\begin{aligned}
&\int_{\frac{1}{20000}}^{\frac{1}{10000}} \int_{|x|\leq\frac{r}{2}} \Big[ |W_\nu(t,x)|^2 + |\nabla W_\nu(t,x)|^2\Big] e^{-\frac{|x|^2}{4t}}\ \mathrm{d}x \mathrm{d}t\\
&\quad\leq C_{\ast} e^{-\frac{40A_2^{\frac12}\widetilde{R}_1^2}{T_3}}\int_{0}^{1} \int_{|x|\leq r} \Big[|W_\nu(t,x)|^2 + |\nabla W_\nu(t,x)|^2\Big] \ \mathrm{d}x \mathrm{d}t\\
&\quad\quad +C_{\ast}e^{\frac{A_2\widetilde{R}_1^2}{T_3}} \int_{|x|\leq r} |W_\nu(0,x)|^2 e^{-5000|x|^2}\ \mathrm{d}x.
\end{aligned}
\end{equation*}
Scaling back to the original variables leads to (i.e.,$\theta=x_3+\nu x$ and $\sigma=-\nu^2 t$)
\begin{align}\label{zpp}
&Z_3\triangleq\int_{-2t_3}^{-t_3} \int_{B(x_3, t_3^{\frac{1}{2}})} \Big(t_3^{-1} |H(\sigma,\theta)|^2
+ |\omega(\sigma,\theta)|^2+ |H_{\theta_k}(\sigma,\theta)|^2\nonumber\\
&\quad\quad+|\nabla H(\sigma,\theta)|^2
+ t_3|\nabla \omega(\sigma,\theta)|^2+t_3 |\nabla H_{\theta_k}(\sigma,\theta)|^2
\Big) e^{\frac{|\theta-x_3|^2}{4\sigma}}\ \mathrm{d}\theta \mathrm{d}\sigma\nonumber\\
&\quad\leq C_{\ast}\Big(e^{-\frac{40A_2^{\frac{1}{2}} \widetilde{R}_1^2}{ T_3}}X_3+t_3e^{\frac{A_2\widetilde{R}_1^2}{T_3}} Y_3\Big),
\end{align}
where we have used the fact due to  \eqref{rb0}
\begin{align*}
\frac{\nu r}{2}=\frac{\sqrt{20000t_3}A_2^{\frac{1}{4}}\widetilde{R}_1}{2\sqrt{T_3}}\geq \frac{10\sqrt{20000}A_2^{\frac{1}{4}}\widetilde{R}}{2\sqrt{T_3}}t_3^{\frac12}
\geq 5\sqrt{20000}A_6A_2^{\frac{1}{4}}t_3^{\frac12} \geq t_3^{\frac12}.
\end{align*}
Here
\begin{align*}
&X_3\triangleq \int_{-20000t_3}^{0} \int_{B(x_3,\frac{|x_3|}{2})}
t_3^{-1} |H(\sigma,\theta)|^2
+ |\omega(\sigma,\theta)|^2+ |H_{\theta_k}(\sigma,\theta)|^2\\
&\quad\quad+|\nabla H(\sigma,\theta)|^2
+ t_3|\nabla \omega(\sigma,\theta)|^2+t_3 |\nabla H_{\theta_k}(\sigma,\theta)|^2
\ \mathrm{d}\theta \mathrm{d}\sigma,\nonumber\\
&Y_3\triangleq\int_{B(x_3,\frac{|x_3|}{2})}
\Big(t_3^{-1}|H(0,\theta)|^2+|\omega(0,\theta)|^2+|H_{\theta_k}(0,\theta)|^2
\Big)e^{-\frac{|\theta-x_3|^2}{4t_3}}\ \mathrm{d}\theta.
\end{align*}
From \eqref{tang} and \eqref{cor},  one has
\begin{align}\label{Z-3est}
Z_3 &\geq \int_{-2t_3}^{-t_3} \int_{B(x_3, t_3^{\frac{1}{2}})} \Big(A_2T_3^{-1} |H(\sigma,\theta)|^2
+ |\omega(\sigma,\theta)|^2+ |H_{\theta_k}(\sigma,\theta)|^2\nonumber\\
&\quad\quad+|\nabla H(\sigma,\theta)|^2
+ T_3e^{-\frac{20\widetilde{R}_1^2}{T_3}}\Big(|\nabla \omega(\sigma,\theta)|^2+ |\nabla H_{\theta_k}(\sigma,\theta)|^2
\Big) e^{\frac{|\theta-x_3|^2}{4\sigma}}\ \mathrm{d}\theta \mathrm{d}\sigma\nonumber\\
& \geq e^{-\frac18}e^{-\frac{20\widetilde{R}_1^2}{T_3}}
 \int_{-2t_3}^{-t_3} \int_{B(x_3, t_3^{\frac{1}{2}})}  T_3^{-1} |H|^2+|\omega|^2+|\nabla H|^2+T_3\Big(|\nabla \omega|^2+| \nabla H_{z_k}|^2\Big)\ \mathrm{d}\theta \mathrm{d}\sigma\nonumber\\
& \geq C_{\ast}T_3^{\frac{1}{2}}e^{- \frac{66\widetilde{R}_1^2}{T_3}}.
 \end{align}
From \eqref{annv} and \eqref{tang} we have
$$ C_{\ast}e^{-\frac{40A_2^{\frac{1}{2}} \widetilde{R}_1^2}{T_3}}X_3\leq C_{\ast} T_3^{\frac12}\Big(\frac{|x_3|^2}{T_3}\Big)^{\frac{3}{2}} e^{-\frac{40A_2^{\frac{1}{2}}\widetilde{R}_1^2}{T_3}} e^{\frac{20\widetilde{R}_1^2}{T_3}}\leq \frac{C_{\ast}}{2}e^{- \frac{66\widetilde{R}_1^2}{T_3}}T_3^{\frac{1}{2}},
$$
which along with \eqref{zpp} and \eqref{Z-3est}, yields
$$ Y_3 \geq \frac{C_{\ast}}{2} t_3^{-1}T_3^{\frac{1}{2}}e^{- \frac{66\widetilde{R}_1^2}{T_3}} e^{-\frac{A_2\widetilde{R}_1^2}{T_3} } \geq C_{\ast}T_3^{-\frac{1}{2}} e^{-\frac{A_2^{\frac32}\widetilde{R}_1^2}{T_3} }.$$
Denote by
$$W_c(0,\theta)=T_3^{-1}|H(0,\theta)|^2
+|\omega(0,\theta)|^2+|H_{\theta_k}(0,\theta)|^2,$$
we have by using \eqref{tang}
\begin{align*}
e^{\frac{20\widetilde{R}_1^2}{T_3}}\int_{5 \widetilde{R} \leq |\theta| \leq \frac{3A_6 \widetilde{R}}{10}} W_c(0, \theta)\ \mathrm{d}\theta  \geq
Y_3 \geq C_{\ast}T_3^{-\frac{1}{2}} e^{-\frac{A_2^{\frac32}\widetilde{R}_1^2}{T_3} }.
 \end{align*}
This, together with $10A_6T_3^{1/2}\leq \widetilde{R}_1\leq \frac{e^{A_6}A_6T_3^{1/2}}{10}$, implies \eqref{W0x2}.
 \medskip

{\bf Step 5}.\quad {\em Conclusion: summing of scales to derive the upper bound for $N_0$.}
 First, we note that  the volume of the annulus $\{x:~5\widetilde{R} \leq |x| \leq \frac{3A_6 \widetilde{R}}{10}\}$ is bounded by $T_3^{\frac{3}{2}}e^{e^{A_6^8}}$ by \eqref{rb0}, which enables us find  a point $\tilde{x}\in (B(0,\frac{3A_6 \widetilde{R}}{10}) \backslash B(0,5\widetilde{R}))$ such that
  $$
  T_3^{-\frac12}|H(0,\tilde{x})|+|\omega(0,\tilde{x})|+|H_{\theta_k}(0,\tilde{x})|
  \gtrsim e^{-e^{A_6^{11}}} T_3^{-1}
  $$
 due to the pigeonhole principle and \eqref{W0x2}. This, together with \eqref{annv}, yields
\begin{align}
 T_3^{-\frac12}\Big|\int_{\R^3} H(0, \tilde{x} - \widetilde{r} y) \xi(y)\mathrm{d}y\Big|&\geq T_3^{-\frac12}\Big|\int_{\R^3} H(0, \tilde{x}) \xi(y)\mathrm{d}y\Big|\nonumber\\
 &\quad-T_3^{-\frac12}\Big|\int_{\R^3} (H(0, \tilde{x})-H(0, \tilde{x} - \widetilde{r} y)) \xi(y)\mathrm{d}y\Big|\nonumber\\
 &\gtrsim T_3^{-\frac12}|H(0,\tilde{x})|-e^{-e^{A_6^{11}}}A_6^{-2} T_3^{-1},\label{TH-eat}\\
\Big|\int_{\R^3} \omega(0, \tilde{x} - \widetilde{r} y) \eta(y)\mathrm{d}y\Big|&\geq \Big|\int_{\R^3} \omega(0, \tilde{x}) \eta(y)\mathrm{d}y\Big|\nonumber\\
 &\quad-\Big|\int_{\R^3} (\omega(0, \tilde{x})-\omega(0, \tilde{x} - \widetilde{r} y)) \eta(y)\mathrm{d}y\Big|\nonumber\\
 &\gtrsim |\omega(0,\tilde{x})|-e^{-e^{A_6^{11}}}A_6^{-2} T_3^{-1},\label{w-eat}\\
 \Big|\int_{\R^3} H_{\theta_k}(0, \tilde{x} - \widetilde{r} y) \varphi(y)\mathrm{d}y\Big|&\geq \Big|\int_{\R^3}H_{\theta_k}(0, \tilde{x}) \varphi(y)\mathrm{d}y\Big|\nonumber\\
 &\quad-\Big|\int_{\R^3} (H_{\theta_k}(0, \tilde{x})-H_{\theta_k}(0, \tilde{x} - \widetilde{r} y)) \varphi(y)\mathrm{d}y\Big|\nonumber\\
 &\gtrsim |H_{\theta_k}(0,\tilde{x})|-e^{-e^{A_6^{11}}}A_6^{-2} T_3^{-1}\label{pH-eat}
 \end{align}
with $\widetilde{r} = e^{-e^{A_6^{11}}} T_3^{\frac12}$. Here, the bump function $\Phi=(\xi,\eta,\varphi)$ is smooth in $\R^3$ with compact support such that $\Phi\equiv1$ on $B(0,1)$, and~$\xi,\eta,\varphi$ are the 3-component vector, respectively. By adding \eqref{TH-eat}-\eqref{pH-eat} and integrating by parts, we conclude that
\begin{align*}
e^{-2e^{A_6^{11}}} T_3^{-\frac{1}{2}}&\lesssim
\Big|\int_{\R^3} H(0, \tilde{x} - \widetilde{r} y) \xi(y)\ \mathrm{d}y\Big| +\Big|\int_{\R^3} v(0, \tilde{x} - \widetilde{r} y) \nabla \times \eta(y)\ \mathrm{d}y\Big|\\
&\quad\quad+\Big|\int_{\R^3} H(0, \tilde{x} - \widetilde{r} y) \partial_{y_i}\varphi_i(y)\ \mathrm{d}y\Big|,
\end{align*}
and hence by H\"older's inequality
\begin{equation*}\label{tango}
\begin{aligned}
 e^{-9e^{A_6^{11}}} &\lesssim
\int_{B(\tilde{x}, \widetilde{r})} |(v,H)(0,x)|^3\ \mathrm{d}x \lesssim
\int_{5\widetilde{R}-\widetilde{r}\leq |x|\leq \frac{3A_6\widetilde{R}}{10}+\widetilde{r}} |(v,H)(0,x)|^3\ \mathrm{d}x \\
&\lesssim\int_{T_3^{\frac{1}{2}} \leq |x| \leq (e^{A_7} T_3)^{\frac{1}{2}}} |(v,H)(0,x)|^3\ \mathrm{d}x.
\end{aligned}
\end{equation*}
for all $A_4^2N_0^{-2}\leq T_3\leq A_4^{-1}$. Summing over a set of such scales $T_3$ increasing geometrically at ratio $e^{A_7}$, we conclude that
\begin{align*}
 &A_7^{-1}\log(A_4^{-3} N_0^2)e^{-9e^{A_6^{11}}} \\
 &\quad\lesssim \Big(\int\limits_{A_4N_0^{-1} \leq |x| \leq e^{\frac{A_7}{2}} A_4N_0^{-1}} +\cdots+\int\limits_{e^{\frac{A_7}{2}\cdot (m-1)}A_4N_0^{-1} \leq |x| \leq e^{\frac{A_7}{2}}A_4^{-\frac12}}\Big) |(v,H)(0,x)|^3\ \mathrm{d}x\\
 &\quad\lesssim\sum_{m=0}^{\lceil\mathrm{log}(A_4^{-3}N_0^2)\rceil} \int\limits_{e^{\frac{A_7}{2}\cdot m}A_4N_0^{-1} \leq |x| \leq  e^{\frac{A_7}{2}\cdot (m+1)}A_4N_0^{-1}}|(v,H)(0,x)|^3\ \mathrm{d}x.\\
 &\quad\lesssim
 \int_{\R^3}|(v,H)(0,x)|^3\ \mathrm{d}x\lesssim A,
 \end{align*}
which finally leads to
$$ N_0^2 \leq e^{e^{e^{A_6^{12}}}}.$$
\end{proof}
\begin{remark}\label{from-exp}
The triply exponential nature of the bounds in Proposition \ref{main-est} can be explained as follows. The first exponential factor originates from Proposition \ref{vi}, which helps identify an appropriate spatial scale $\widetilde{R}$. The second exponential factor is derived from the quantitative Carleman inequalities. Finally, the third exponential factor results from the need to identify a sufficient number of disjoint spatial scales to contradict \eqref{able}.
\end{remark}

\section{Appendix A. Proof the Lemma \ref{theo.higher}}
\renewcommand{\theequation}{A.\arabic{equation}}
\renewcommand{\thethm}{A.\arabic{thm}}
\label{Sec5}

\label{app.a}

The following lemma plays a key role in the proof of Lemma \ref{theo.higher}.
\begin{lem}\label{lem.vwcontr}
Let $B=B_1(0)\subset \mathbb{R}^3$, $B'=B_{1-\delta}(0)$ with $0<\delta<1$. Let $v\in L^2(B)$ be divergence-free and $\omega\triangleq \nabla\times v$.  Then, for $k=1,2,\ldots$ we have
\begin{align*}
\|D^kv\|_{L^p(B')}\leq c(\|D^{k-1}\omega\|_{L^p(B)}+\| v\|_{L^2(B)}+\|\omega\|_{L^p(B)})
\end{align*}
and
\begin{align}\label{vwcontr2}
\|D^kv\|_{C^{0,\alpha}(B')}\leq c(\|D^{k-1}\omega\|_{C^{0,\alpha}(B)}+\|v\|_{L^2(B)})
\end{align}
with $1<p<\infty$, $0<\alpha<1$. Here $c$ denotes the generic constant depending only on $\delta,~p$ and $\alpha$.
\end{lem}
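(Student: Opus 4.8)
The plan is to prove the estimate by a standard elliptic-regularity bootstrap, using the fact that the Biot--Savart operator $v \mapsto (-\Delta)^{-1}\nabla\times\omega$ recovers $v$ from $\omega$ up to a harmonic correction, and that the harmonic part is controlled by any norm of $v$ thanks to interior estimates for harmonic functions. Concretely, fix a sequence of nested balls $B' = B_{r_0} \subset B_{r_1} \subset \cdots \subset B_{r_k} = B$ with $r_0 = 1-\delta$ and intermediate radii chosen so that each containment has a definite gap. For each step one uses a cutoff $\chi_i \in C_c^\infty(B_{r_{i+1}})$ with $\chi_i \equiv 1$ on $B_{r_i}$, writes $\chi_i v$ as a solution of a Poisson-type problem, and invokes the Calder\'on--Zygmund estimates (the $W^{k,p}$ theory) and the Schauder estimates (the $C^{k,\alpha}$ theory), both quoted from \cite{MR3616490}, to pass from control of $\nabla^{k-1}\omega$ on the larger ball to control of $\nabla^k v$ on the smaller ball.

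First I would make the Biot--Savart representation precise on a ball. Since $v$ is divergence-free, on $\mathbb{R}^3$ one has $v = \nabla\times(-\Delta)^{-1}\omega$; on the ball $B$, for a cutoff $\chi$ equal to $1$ on a slightly smaller ball, $\chi v$ satisfies $-\Delta(\chi v) = \chi\,\nabla\times\omega + (\text{terms involving } \nabla\chi \text{ and } v)$, where the error terms are supported in the annulus where $\nabla\chi \neq 0$ and involve only $v$ and $\omega$ themselves (no derivatives of $v$), because $-\Delta(\chi v) = -\chi\Delta v - 2\nabla\chi\cdot\nabla v - (\Delta\chi)v$ and $-\Delta v = \nabla\times\omega$ (using $\operatorname{div} v = 0$), while the first-order term $\nabla\chi\cdot\nabla v$ can be integrated by parts against the Newtonian potential to move the derivative onto $\nabla\chi$. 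Applying $\|\nabla^k (-\Delta)^{-1} F\|_{L^p} \lesssim \|F\|_{W^{k-2,p}}$-type bounds (Calder\'on--Zygmund) and the analogous Schauder bounds then gives, on the smaller ball,
$$\|\nabla^k v\|_{L^p(B_{r_i})} \lesssim \|\nabla^{k-1}\omega\|_{L^p(B_{r_{i+1}})} + \|v\|_{L^p(B_{r_{i+1}})} + \|\omega\|_{L^p(B_{r_{i+1}})},$$
and correspondingly with $C^{0,\alpha}$ norms in the second displayed inequality. A final interpolation/absorption step converts the intermediate $\|v\|_{L^p}$ on $B$ into $\|v\|_{L^2(B)}$ via the embedding chain combined with the lower-order $\nabla^j v$ bounds already obtained (or directly, for the $C^{0,\alpha}$ statement, using that $C^{0,\alpha}$ control of $\nabla^{k-1}\omega$ plus $L^2$ control of $v$ dominates the $L^p(\omega)$ and $L^p(v)$ terms after shrinking balls).

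The main obstacle I expect is bookkeeping rather than conceptual difficulty: carefully handling the commutator (cutoff) terms so that no derivative of $v$ of order $k$ or higher ever appears on the right-hand side, and tracking that every constant depends only on $\delta$, $p$, $\alpha$ (and $k$), not on $v$. The cleanest route is an induction on $k$: the base case $k=1$ is exactly the CZ/Schauder estimate for $-\Delta(\chi v) = \chi\nabla\times\omega + \text{l.o.t.}(v)$, and the inductive step applies the $k=1$ bound to the (still divergence-free) vector field $\nabla^{k-1} v$, whose curl is $\nabla^{k-1}\omega$, on a slightly smaller ball, absorbing the lower-order terms $\|\nabla^{k-1}v\|_{L^2}$ etc.\ into the previously established estimates. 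One must also note the harmless point that $\nabla^{k-1} v$ is divergence-free componentwise, so the identity $-\Delta(\nabla^{k-1}v) = \nabla\times(\nabla^{k-1}\omega)$ holds, which is what makes the induction close.
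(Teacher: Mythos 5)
Your plan is correct and follows essentially the same route as the paper: decompose $v$ via a cutoff into a Newtonian-potential/Biot--Savart part handled by the Calder\'on--Zygmund and Schauder estimates, plus a remainder that is harmonic (equivalently, given by convolution with a smooth kernel) on the inner ball and whose derivatives are therefore controlled through a weak norm of $v$ by interior estimates --- which is exactly where the $\|v\|_{L^2(B)}$ term comes from. The only differences are organizational (you cut off $v$ and induct on $k$, whereas the paper cuts off $\omega$ and treats $D^k$ directly), and your ``interpolation/absorption'' step for converting the $L^p$ norm of $v$ on the annulus into $\|v\|_{L^2(B)}$ is cleaner if you observe that, thanks to the definite gap between $\operatorname{supp}\nabla\chi_i$ and the evaluation ball, those error terms are only ever seen through their $L^1$ norms.
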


The proof this lemma is standard which is well-known consequence of the classical $L^p$- and $C^{\alpha}$-estimates for the Laplace equation. However, since
we cannot find it in the literature, we give a full proof for reader's convenience.

\begin{proof}[The proof of Lemma \ref{lem.vwcontr}.]
 we first notice that  $\Delta v=-\nabla\times\omega$ due to $\div v=0$.  To derive the desired result,  we set $\tau$ be a smooth cut-off function that equals $1$ in $B'\subset\subset B$ and vanishes outside $B$, and then
$$D^kv=D^{k-1}(-\Delta)^{-1}\partial_{x_i}(\nabla\times(\omega\tau))+D^kA,$$
where $A$ is harmonic on $B'$. From the elliptic regularity for harmonic functions and Calder\'{o}n-Zygmund inequality, we have:

 Case 1.\quad $1<p<3$.
\begin{align*}
\|D^k A\|_{L^\infty(B')}&\leq c\|A\|_{L^1(B'')}\leq c(\|(-\Delta)^{-1}(\nabla\times(\omega\tau))\|_{L^{1}(B'')}+\|v\|_{L^1(B'')})\\
&\leq c(\|(-\Delta)^{-1}(\nabla\times(\omega\tau))\|_{L^{p^\ast}(\R^3)}+\|v\|_{L^2(B)})\\
&\leq c(\|\nabla(-\Delta)^{-1}(\nabla\times(\omega\tau))\|_{L^{p}(\R^3)}+\|v\|_{L^2(B)})\\
&\leq c(\|\omega\|_{L^{p}(B)}+\|v\|_{L^2(B)}),
\end{align*}
where $p^\ast=\frac{3p}{3-p}>1$, $B'\subset\subset B''\subset\subset B$.

 Case 2.\quad $1<\widetilde{p}<3\leq p<\infty$.
\begin{align*}
\|D^k A\|_{L^\infty(B')}&\leq c\|A\|_{L^1(B'')}\leq c(\|(-\Delta)^{-1}(\nabla\times(\omega\tau))\|_{L^{\widetilde{p}^\ast}(B'')}
+\|v\|_{L^1(B'')})\\
&\leq c(\|(-\Delta)^{-1}(\nabla\times(\omega\tau))\|_{L^{\widetilde{p}^\ast}(\R^3)}
+\|v\|_{L^2(B)})\\
&\leq c(\|\nabla(-\Delta)^{-1}(\nabla\times(\omega\tau))\|_{L^{\widetilde{p}}(\R^3)}
+\|v\|_{L^2(B)})\\
&\leq c(\|\omega\|_{L^{p}(B)}+\|v\|_{L^2(B)})
\end{align*}
where $\widetilde{p}^\ast=\frac{3\widetilde{p}}{3-\widetilde{p}}>1$. This, along with the Calder\'{o}n-Zygmund inequality, yields
$$
\|D^k v\|_{L^p(B')}\leq c(\|D^{k-1}\omega\|_{L^{p}(B)}+\|v\|_{L^2(B)}+\|\omega\|_{L^p(B)}).
$$
which is a desired result. Now we turn to the proof of \eqref{vwcontr2}. In fact,
\begin{align*}
\|D^k A\|_{C^{0,\alpha}(B')}&\leq\|D^k A\|_{C^{1}(B')}
\leq c\|D^{k+1}A\|_{L^\infty(B')}\leq c\|A\|_{L^1(B'')} \\
&\leq c(\|\omega\|_{L^{p}(B)}+\|v\|_{L^2(B)})\leq c(\|\omega\|_{L^{\infty}(B)}+\|v\|_{L^2(B)}).
\end{align*}
This, together with the classical Schauder estimate (for example, see \cite{MR1406091})
$$
\|(-\Delta)^{-1}\partial_{x_i}(\nabla\times(\omega\tau))\|_{C^{0,\alpha}(\R^3)}\leq C \|\omega\tau\|_{C^{0,\alpha}(\R^3)}
$$
derives
\begin{align*}
\|D^k v\|_{C^{0,\alpha}(B')}&\leq \|D^k (-\Delta)^{-1}(\nabla\times(\omega\tau))\|_{C^{0,\alpha}(B')}+
\|D^k A\|_{C^{0,\alpha}(B')}\\
&\leq c(\|D^{k-1}\omega\|_{C^{0,\alpha}(B)}+\|v\|_{L^2(B)})
\end{align*}
which completes the proof of Lemma \ref{lem.vwcontr}.
\end{proof}

\begin{proof}[The proof of Lemma \ref{theo.higher}.]
In order to obtain the desired result, we consider equation $\eqref{wJeq}_1$, $\eqref{Hxkeq}$ as a heat equation with force term
$$
(\omega\cdot\nabla)v-(J\cdot\nabla)H-(v\cdot\nabla)\omega+(H\cdot\nabla)J,$$
and
$$(H_{x_k}\cdot\nabla)v
 +(H\cdot\nabla)v_{x_k}-(v_{x_k}\cdot\nabla)H-(v\cdot\nabla)H_{x_k},$$
respectively, which has some known regularity.
Smoothing properties of the heat operator enable us to improve the
regularity of $\omega$ and $\nabla H$.

{\bf Step 1.} {\em Localisation.}  To derive the $L_t^\infty L_x^\infty$ interior estimates of $\omega$ and $\nabla H$,  we introduce  a cutoff function $\chi_1(t,x)\in C_c^\infty(\R^4)$ such that $0\leq\chi_1(t,x)\leq1$,
\begin{align*}
\chi_1(t,x)=\begin{cases}
1, & (t,x)\in Q_{\frac{5}{12}}(0,0),\\
0, & (t,x)\in \mathbb{R}^4\backslash Q_{\frac{11}{24}}(0,0).
\end{cases}
\end{align*}
By a simple calculation, we see that the  $\chi\omega_i$ satisfies
\begin{align}\label{wi}
&(\partial_t-\Delta)(\chi_1\omega_i)\nonumber\\
&\quad=\partial_t(\chi_1\omega_i)-\partial_{jj}(\chi_1\omega_i)\nonumber\\
&\quad=\omega_i\partial_t\chi_1+\chi_1\partial_t\omega_i
-(\omega_i\partial_{jj}\chi_1
+\partial_{j}\chi_1\partial_{j}\omega_i)
-(\partial_{j}\chi_1\partial_{j}\omega_i+\chi_1\partial_{jj}\omega_i)\nonumber\\
&\quad=\chi_1(\partial_t\omega_i-\partial_{jj}\omega_i)+(\partial_t\chi_1
-\partial_{jj}\chi_1)\omega_i-2\partial_{j}\chi_1\partial_{j}\omega_i\nonumber\\
&\quad=\chi_1(\omega_j\partial_{j}v_i+H_j\partial_{j}J_i
-v_j\partial_{j}\omega_i-J_j\partial_{j}H_i)
+\omega_i\partial_{t}\chi_1+\omega_i\partial_{jj}\chi_1
-2\partial_{j}\Big(\omega_i\partial_{j}\chi_1\Big)\nonumber\\
&\quad=\partial_{j}\Big(\chi_1\omega_jv_i-\chi_1\omega_iv_j\Big)+
\partial_{j}\Big[\chi_1(H_jJ_i-H_iJ_j)\Big]\nonumber\\
&\quad\quad -\partial_{j}\chi_1
\Big(\omega_iv_j+J_jH_i
-\omega_jv_i-J_iH_j\Big) +(\partial_t\chi_1+\Delta\chi_1)\omega_i
-2\partial_j(\partial_j\chi_1\omega_i)\nonumber\\
&\quad
\triangleq F_i^{(1)},
\end{align}
and $\chi_1 (H_{x_k})_i$ fulfills
\begin{align}\label{Hxki}
(\partial_t-\Delta)(\chi_1 (H_{x_k})_i)
&=\partial_{j}\Big(\chi_1 (H_{x_k})_jv_i-\chi_1 (H_{x_k})_iv_j\Big)+
\partial_{j}\Big[\chi_1(H_j(v_{x_k})_i-H_i(v_{x_k})_j)\Big]\nonumber\\
&\quad-\partial_{j}\chi_1
\Big[H_i(v_{x_k})_j+(H_{x_k})_iv_j
-(v_{x_k})_iH_j-v_i(H_{x_k})_j\Big]\nonumber\\
&\quad +
(\partial_t\chi_1+\Delta\chi_1)(H_{x_k})_i
-2\partial_j(\partial_j\chi_1(H_{x_k})_i)\nonumber\\
&\triangleq G_i^{(1)},
\end{align}
where $\omega_i, (H_{x_k})_i$ are the  $i$th component of $\omega, H_{x_k}$, respectively. By the uniqueness of the solution to the the heat operator (see the page 393 of \cite{MR3616490})), one has
$$
(\chi_1\omega_i,\chi_1 (H_{x_k})_i)=(\partial_t-\Delta)^{-1}(F_i^{(1)}, G_i^{(1)}).
$$

\medskip

{\bf Step 2.} {\em Bootstrapping arguments: from $L^2$ to $L^{\infty}$.} First, by using the Lemma \ref{lem.heat1}-\ref{lem.heat2} and \eqref{small-v-H}, it is clear that
\begin{align*}
&\|(\partial_t-\Delta)^{-1}F_i^{(1)}\|_{L^3_tL^3_x(\R^4)}\\
&\quad\lesssim\|\chi_1\omega_jv_i-\chi_1\omega_iv_j\|_{L^2_tL^2_x(Q_{\frac{11}{24}}(0,0))}
+\|\chi_1 H_jJ_i-\chi_1 H_iJ_j\|_{L^2_tL^2_x(Q_{\frac{11}{24}}(0,0))}
\\&\quad\quad+\|(\partial_{j}\chi_1)
(\omega_iv_j+J_jH_i-\omega_jv_i-J_iH_j)\|_{L^2_tL^2_x(Q_{\frac{11}{24}}(0,0))}
+\|(\partial_t\chi_1+\Delta\chi_1)\omega_i\|_{L^2_tL^2_x(Q_{\frac{11}{24}}(0,0))}\\
&\quad\lesssim\|v\|_{L^\infty_tL^\infty_x(Q_{\frac12}(0,0))}+\|H\|_{L^\infty_tL^\infty_x
(Q_{\frac12}(0,0))}+\|\omega\|_{L^2_tL^2_x(Q_{\frac12}(0,0))}
+\|J\|_{L^2_tL^2_x(Q_{\frac12}(0,0))},
\end{align*}
similarly,
\begin{align*}
&\|(\partial_t-\Delta)^{-1}G_i^{(1)}\|_{L^3_tL^3_x(\R^4)}\\
&\quad\lesssim\|v\|_{L^\infty_tL^\infty_x(Q_{\frac12}(0,0))}
+\|H\|_{L^\infty_tL^\infty_x(Q_{\frac12}(0,0))}
+\|\omega\|_{L^2_tL^2_x(Q_{\frac12}(0,0))}
+\|J\|_{L^2_tL^2_x(Q_{\frac12}(0,0))}.
\end{align*}
Here the Lemma \ref{lem.vwcontr} has been used in the last inequality. Then, one has
\begin{align*}
&\|(\omega_i,(H_{x_k})_i)\|_{L^3_tL^3_x(Q_{\frac5{12}}(0,0))}\\&
\quad\leq \|(\chi_1\omega_i,\chi_1 (H_{x_k})_i)\|_{L^3_tL^3_x(\R^4)}\\&
\quad\lesssim\|v\|_{L^\infty_tL^\infty_x(Q_{\frac12}(0,0))}
+\|H\|_{L^\infty_tL^\infty_x(Q_{\frac12}(0,0))}+\|\omega\|
_{L^2_tL^2_x(Q_{\frac12}(0,0))}+\|J\|_{L^2_tL^2_x(Q_{\frac12}(0,0))}.
\end{align*}
Running the localisation argument above again, we take a cutoff function $\chi_2(t,x)\in C_c^\infty(\R^4)$ such that $0\leq\chi_2(t,x)\leq1$,
\begin{align*}
\chi_2(t,x)=\begin{cases}
1, & (t,x)\in Q_{\frac{1}{3}}(0,0),\\
0, & (t,x)\in \mathbb{R}^4\backslash Q_{\frac{3}{8}}(0,0),
\end{cases}
\end{align*}
then
\begin{align*}
&\|(\partial_t-\Delta)^{-1}F_i^{(2)}\|_{L^6_tL^6_x(\R^4)}\\
&\quad\lesssim\|\chi_2\omega_jv_i
-\chi_2\omega_iv_j\|_{L^3_tL^3_x(Q_{\frac{3}{8}}(0,0))}
+\|\chi_2H_jJ_i-\chi_2H_iJ_j\|_{L^3_tL^3_x(Q_{\frac{3}{8}}(0,0))}
\\&\quad\quad+\|(\partial_{j}\chi_2)
(\omega_iv_j+J_jH_i-\omega_jv_i-J_iH_j)\|_{L^3_tL^3_x(Q_{\frac{3}{8}}(0,0))}
+\|(\partial_t\chi_2+\Delta\chi_2)\omega_i\|_{L^3_tL^3_x(Q_{\frac{3}{8}}(0,0))}\\
&\quad\lesssim\|v\|_{L^\infty_tL^\infty_x(Q_{\frac12}(0,0))}
+\|H\|_{L^\infty_tL^\infty_x(Q_{\frac12}(0,0))}
+\|\omega\|_{L^2_tL^2_x(Q_{\frac12}(0,0))}+\|J\|_{L^2_tL^2_x(Q_{\frac12}(0,0))},
\end{align*}
Similarly, by using Lemma \ref{lem.heat1}-\ref{lem.heat2}, Lemma \ref{lem.vwcontr} and \eqref{small-v-H}, we get
\begin{align*}
&\|(\partial_t-\Delta)^{-1}G_i^{(2)}\|_{L^6_tL^6_x(\R^4)}\\
&\quad\lesssim\|\nabla H\|_{L^3_tL^3_x(Q_{\frac5{12}}(0,0))}
+\|\nabla v\|_{L^3_tL^3_x(Q_{\frac38}(0,0))}\nonumber\\
&\quad\lesssim\|\nabla H\|_{L^3_tL^3_x(Q_{\frac5{12}}(0,0))}+\|\omega\|_{L^3(Q_{\frac5{12}}(0,0))}
+\|v\|_{L^2_tL^2_x(Q_{\frac5{12}}(0,0))}\\
&\quad\lesssim\|v\|_{L^\infty(Q_{\frac12}(0,0))}+\|H\|_{L^\infty(Q_{\frac12}(0,0))}
+\|\omega\|_{L^2_tL^2_x(Q_{\frac12}(0,0))}+\|J\|_{L^2_tL^2_x(Q_{\frac12}(0,0))}.
\end{align*}
We use  Lemma \ref{lem.heat1} again to derive that
\begin{align*}
&\|(\omega_i,(H_{x_k})_i)\|_{L^6_tL^6_x(Q_{\frac13}(0,0))}\\&
\quad\leq \|(\chi_2\omega_i,\chi_2 (H_{x_k})_i)\|_{L^6_tL^6_x(\R^4)}\\&
\quad\lesssim\|v\|_{L^\infty_tL^\infty_x(Q_{\frac12}(0,0))}
+\|H\|_{L^\infty_tL^\infty_x(Q_{\frac12}(0,0))}+\|\omega\|_
{L^2_tL^2_x(Q_{\frac12}(0,0))}+\|J\|_{L^2_tL^2_x(Q_{\frac12}(0,0))}.
\end{align*}
Now, to finish our proof of this step, we take $\chi_3(t,x)\in C_c^\infty(\R^4)$ such that $0\leq\chi_3(t,x)\leq1$,
\begin{align*}
\chi_3(t,x)=\begin{cases}
1, & (t,x)\in Q_{\frac{1}{4}}(0,0),\\
0, & (t,x)\in \mathbb{R}^4\backslash Q_{\frac{7}{24}}(0,0),
\end{cases}
\end{align*}
then
\begin{align*}
&\|(\partial_t-\Delta)^{-1}F_i^{(3)}\|_{L^\infty_tL^\infty_x(\R^4)}\nonumber\\
&\quad\lesssim\|\chi_3\omega_jv_i-\chi_3\omega_iv_j\|
_{L^6_tL^6_x(Q_{\frac7{24}}(0,0))}
+\|\chi_3H_jJ_i-\chi_3H_iJ_j\|_{L^6_tL^6_x(Q_{\frac7{24}}(0,0))}
\nonumber\\
&\quad\quad +\|(\partial_{j}\chi_3)
(\omega_iv_j+J_jH_i-\omega_jv_i-J_iH_j)\|_{L^6_tL^6_x(Q_{\frac7{24}}(0,0))}\nonumber\\
&\quad\quad+\|
(\partial_t\chi_3+\Delta\chi_3)\omega_i\|
_{L^6_tL^6_x(Q_{\frac7{24}}(0,0))}
+\|(\partial_j\chi_3)\omega_i\|
_{L^6_tL^6_x(Q_{\frac7{24}}(0,0))}\nonumber\\
&\quad\lesssim\|\omega\|_{L^6_tL^6_x(Q_{\frac1{3}}(0,0))}+
\|\nabla H\|_{L^6_tL^6_x(Q_{\frac1{3}}(0,0))}\nonumber\\
&\quad\lesssim\|v\|_{L^\infty_tL^\infty_x(Q_{\frac12}(0,0))}
+\|H\|_{L^\infty_tL^\infty_x(Q_{\frac12}(0,0))}
+\|\omega\|_{L^2_tL^2_x(Q_{\frac12}(0,0))}+\|J\|_{L^2_tL^2_x(Q_{\frac12}(0,0))},
\end{align*}
and we used Lemma \ref{lem.vwcontr}, this suggests that
\begin{align*}
&\|(\partial_t-\Delta)^{-1}G_i^{(3)}\|_{L^\infty_tL^\infty_x(\R^4)}\nonumber\\
&\quad\lesssim\|\nabla H\|_{L^6_tL^6_x(Q_{\frac7{24}}(0,0))}
+\|\nabla v\|_{L^6_tL^6_x(Q_{\frac{7}{24}}(0,0))}\nonumber\\
&\quad\lesssim\|\nabla H\|_{L^6_tL^6_x(Q_{\frac1{3}}(0,0))}
+\|\omega\|_{L^6_tL^6_x(Q_{\frac1{3}}(0,0))}
+\|v\|_{L^2_tL^2_x(Q_{\frac1{3}}(0,0))}
\nonumber\\
&\quad\lesssim\|v\|_{L^\infty_tL^\infty_x(Q_{\frac12}(0,0))}
+\|H\|_{L^\infty_tL^\infty_x(Q_{\frac12}(0,0))}
+\|\omega\|_{L^2_tL^2_x(Q_{\frac12}(0,0))}+\|J\|_{L^2_tL^2_x(Q_{\frac12}(0,0))}.
\end{align*}
Together with Lemma \ref{lem.heat1}, we can obtain
\begin{align}\label{whiL3}
&\|(\omega_i,(H_{x_k})_i)\|_{L^\infty_tL^\infty_x(Q_{\frac1{4}}(0,0))}\nonumber\\
&\quad\leq \|(\chi_3\omega_i,\chi_3 (H_{x_k})_i)\|_{L^\infty_tL^\infty_x(\R^4)}\nonumber\\
&\quad\lesssim\|v\|_{L^\infty_tL^\infty_x(Q_{\frac12}(0,0))}
+\|H\|_{L^\infty_tL^\infty_x(Q_{\frac12}(0,0))}
+\|\omega\|_{L^2_tL^2_x(Q_{\frac12}(0,0))}+\|J\|_{L^2_tL^2_x(Q_{\frac12}(0,0))}.
\end{align}
{\bf Step 3.} {\em From $L^{\infty}$ to $C^{1}$}.
Let us apply these results to equation \eqref{wi} and \eqref{Hxki}, which we
rewrites as
\begin{align*}
(\partial_t-\Delta)(\eta_1\omega_i)=\partial_{j}f_1+g_1,
\end{align*}
and
\begin{align*}
(\partial_t-\Delta)(\eta_2(H_{x_k})_i)=\partial_{j}f_2+g_2,
\end{align*}
where $\eta_1(t,x),\eta_2(t,x)\in C_c^\infty(\R^4)$ satisfies $0\leq\eta_1, \eta_2\leq1$ with
\begin{align*}
\eta_1=\begin{cases}
1, & (t,x)\in Q_{\frac{1}{4}-[\frac{1}{2^3}(1-\frac12)]}(0,0),\\
0, & (t,x)\in \mathbb{R}^4\backslash Q_{\frac{1}{4}}(0,0),
\end{cases}\quad
\eta_2=\begin{cases}
1, & (t,x)\in Q_{\frac{1}{4}-[\frac{1}{2^3}(1-\frac1{2^3})]}(0,0),\\
0, & (t,x)\in \mathbb{R}^4\backslash Q_{\frac{1}{4}-[\frac{1}{2^3}(1-\frac1{2^2})]}(0,0),
\end{cases}
\end{align*}
and
\begin{align*}
&f_1=\eta_1\omega_jv_i-\eta_1\omega_iv_j+\eta_1H_jJ_i-\eta_1 H_iJ_j-2(\partial_j\eta_1)\omega_i,\\
&f_2=\eta_2(H_{x_k})_jv_i-\eta_2(H_{x_k})_iv_j+
\eta_2H_j(v_{x_k})_i-\eta_2H_i(v_{x_k})_j)-2(\partial_j\eta_2)(H_{x_k})_i,\\
&g_1=\partial_{j}\eta_1
\Big(\omega_iv_j+J_jH_i
-\omega_jv_i-J_iH_j\Big)+(\partial_t\eta_1+\Delta\eta_1)\omega_i\\
&g_2=\partial_{j}\eta_2
\Big[H_i(v_{x_k})_j+(H_{x_k})_iv_j
-(v_{x_k})_iH_j-v_i(H_{x_k})_j\Big]+
(\partial_t\eta_2+\Delta\eta_2)(H_{x_k})_i.
\end{align*}
Based on the result of the Step $2$, we know that $v,~H,~\omega,~\nabla H\in L^\infty_tL^\infty_x(Q_{\frac14}(0,0))$. This, along with the Lemma \ref{lem.heat2-1}, \eqref{small-v-H} and \eqref{whiL3}, yields for any $0<\alpha<1$,
\begin{align*}
 &\|\omega\|_{L_t^\infty C^{0,\alpha}_x(Q_{\frac{1}{4}-[\frac{1}{2^3}(1-\frac12)]}(0,0))}\nonumber\\
 &\quad\lesssim \|f_1\|_{L_t^{\infty}L_x^{\infty}(Q_{\frac14}(0,0))}+\|g_1\|
_{L_t^{\infty}L_x^{\infty}(Q_{\frac14}(0,0))}\nonumber\\
&\quad\lesssim  \|\omega\|_{L_t^{\infty}L_x^{\infty}(Q_{\frac14}(0,0))}
+ \|\nabla H\|_{L_t^{\infty}L_x^{\infty}(Q_{\frac14}(0,0))}\nonumber\\
&\quad\lesssim \|v\|_{L_t^{\infty}L_x^{\infty}(Q_{\frac12}(0,0))}
+\|H\|_{L_t^{\infty}L_x^{\infty}(Q_{\frac12}(0,0))}
+\|\omega\|_{L^2_tL^2_x(Q_{\frac12}(0,0))}
+\|J\|_{L^2_tL^2_x(Q_{\frac12}(0,0))}.
\end{align*}
By using Lemma \ref{lem.vwcontr} again, one has
\begin{align*}
&\|\nabla v\|_{L_t^\infty L^{\infty}_x(Q_{\frac{1}{4}-[\frac{1}{2^3}(1-\frac1{2^2})]}(0,0))}\nonumber\\
&\quad\lesssim\|\nabla v\|_{L_t^\infty C^{0,\alpha}_x(Q_{\frac{1}{4}-[\frac{1}{2^3}(1-\frac1{2^2})]}(0,0))}\nonumber\\
&\quad\lesssim\|\omega\|_{L_t^\infty C^{0,\alpha}_x(Q_{\frac{1}{4}-[\frac{1}{2^3}(1-\frac1{2})]}(0,0))}
+\|v\|_{L_t^\infty L^\infty_x(Q_{\frac1{2}}(0,0))}\nonumber\\
&\quad\lesssim \|v\|_{L_t^{\infty}L_x^{\infty}(Q_{\frac12}(0,0))}
+\|H\|_{L_t^{\infty}L_x^{\infty}(Q_{\frac12}(0,0))}
+\|\omega\|_{L^2_tL^2_x(Q_{\frac12}(0,0))}
+\|J\|_{L^2_tL^2_x(Q_{\frac12}(0,0))}.
\end{align*}
On the other hand, Lemma \ref{lem.heat2-1} gives
\begin{align*}
 &\|\nabla H\|_{L_t^\infty C^{0,\alpha}_x(Q_{\frac{1}{4}-[\frac{1}{2^3}(1-\frac1{2^3})]}(0,0))}\nonumber\\
 &\quad\lesssim\|f_2\|_{L_t^{\infty}L_x^{\infty}
 (Q_{\frac{1}{4}-[\frac{1}{2^3}(1-\frac1{2^2})]}(0,0))}
 +\|g_2\|_{L_t^{\infty}L_x^{\infty}
(Q_{\frac{1}{4}-[\frac{1}{2^3}(1-\frac1{2^2})]}(0,0))}\nonumber\\
&\quad\lesssim  \|\nabla H\|_{L_t^{\infty}L_x^{\infty}(Q_{\frac{1}{4}-[\frac{1}{2^3}(1-\frac1{2^2})]}(0,0))}
+ \|\nabla v\|_{L_t^\infty L^{\infty}_x(Q_{\frac{1}{4}-[\frac{1}{2^3}(1-\frac1{2^2})]}(0,0))}\nonumber\\
&\quad\lesssim \|v\|_{L_t^{\infty}L_x^{\infty}(Q_{\frac12}(0,0))}
+\|H\|_{L_t^{\infty}L_x^{\infty}(Q_{\frac12}(0,0))}
+\|\omega\|_{L^2_tL^2_x(Q_{\frac12}(0,0))}+\|J\|_{L^2_tL^2_x(Q_{\frac12}(0,0))}.
\end{align*}

{\bf Step 4.} {\em Conclusion by induction.} We conclude the proof by induction on $k$ in this step.
 Now, let's assume that the following estimates hold up to the $m$-order derivative, i.e. for any $2\leq k\leq m$
\begin{align}\label{v-H-est-k}
&\|\nabla^k (v,H)\|_{L_t^\infty L^{\infty}_x(Q_{\frac{1}{4}-[\frac{1}{2^3}(1-\frac1{2^{6k-6}})]}(0,0))}\nonumber\\
&\quad\lesssim \|v\|_{L_t^{\infty}L_x^{\infty}(Q_{\frac12}(0,0))}
+\|H\|_{L_t^{\infty}L_x^{\infty}(Q_{\frac12}(0,0))}
+\|\omega\|_{L^2_tL^2_x(Q_{\frac12}(0,0))}
+\|J\|_{L^2_tL^2_x(Q_{\frac12}(0,0))}.
\end{align}
We will prove the following estimates
\begin{align*}
&\|\nabla^{m+1} (v,H)\|_{L_t^\infty L^{\infty}_x(Q_{\frac{1}{4}-[\frac{1}{2^3}(1-\frac1{2^{6m}})]}(0,0))}\\
&\quad\lesssim \|v\|_{L_t^{\infty}L_x^{\infty}(Q_{\frac12}(0,0))}
+\|H\|_{L_t^{\infty}L_x^{\infty}(Q_{\frac12}(0,0))}
+\|\omega\|_{L^2_tL^2_x(Q_{\frac12}(0,0))}
+\|J\|_{L^2_tL^2_x(Q_{\frac12}(0,0))}.
\end{align*}
By using \eqref{v-H-est-k}, we can obtain
 \begin{align*}
&\|\nabla^{m-1} (\omega_i,(H_{x_k})_i)\|_{L_t^\infty L^{\infty}_x(Q_{\frac{1}{4}-[\frac{1}{2^3}(1-\frac1{2^{6m-6}})]}(0,0))}\nonumber\\
&\quad\lesssim \|v\|_{L_t^{\infty}L_x^{\infty}(Q_{\frac12}(0,0))}
+\|H\|_{L_t^{\infty}L_x^{\infty}(Q_{\frac12}(0,0))}
+\|\omega\|_{L^2_tL^2_x(Q_{\frac12}(0,0))}
+\|J\|_{L^2_tL^2_x(Q_{\frac12}(0,0))}.
\end{align*}
Next, according to the calculation process of equation \eqref{wi}-\eqref{Hxki}, we get
\begin{align*}
(\partial_t-\Delta)(\varphi_1\partial_{x_r}^{m-1}\omega_i)=\partial_{j}F_1+G_1,
\end{align*}
and
\begin{align*}
(\partial_t-\Delta)[\varphi_2\partial_{x_r}^{m-1}(H_{x_k})_i]=\partial_{j}F_2+G_2,
\end{align*}
where $\varphi_1(t,x),\varphi_2(t,x)\in C_c^\infty(\R^4)$ satisfies $0\leq\varphi_1, \varphi_2\leq1$ with
\begin{align*}
\varphi_1(t,x)=\begin{cases}
1, & (t,x)\in Q_{\frac{1}{4}-[\frac{1}{2^3}(1-\frac1{2^{6m-5}})]}(0,0),\\
0, & (t,x)\in \mathbb{R}^4\backslash Q_{\frac{1}{4}-[\frac{1}{2^3}(1-\frac1{2^{6m-6}})]}(0,0),
\end{cases}
\end{align*}
\begin{align*}
\varphi_2(t,x)=\begin{cases}
1, & (t,x)\in Q_{\frac{1}{4}-[\frac{1}{2^3}(1-\frac1{2^{6m-3}})]}(0,0),\\
0, & (t,x)\in \mathbb{R}^4\backslash Q_{\frac{1}{4}-[\frac{1}{2^3}(1-\frac1{2^{6m-4}})]}(0,0),
\end{cases}
\end{align*}
and
\begin{align*}
F_1&=\sum_{s=0}^{m-1}\binom{m-1}{s} \Big[\varphi_1(\partial_{x_r}^s\omega_j)
(\partial_{x_r}^{m-1-s}v_i)
+\varphi_1(\partial^s_{x_r}H_j)(\partial_{x_r}^{m-1-s}J_i) \\ &\quad\quad\quad-\varphi_1 (\partial_{x_r}^sv_j)(\partial_{x_r}^{m-1-s}\omega_i)
-\varphi_1(\partial^s_{x_r}J_j)(\partial_{x_r}^{m-1-s}H_i)\Big]- 2(\partial_j\varphi_1)(\partial_{x_r}^{m-1}\omega_i),\\
F_2&=\sum_{s=0}^{m-1}\binom{m-1}{s}\Big[\varphi_2\partial_{x_r}^s(H_{x_k})_j
(\partial_{x_r}^{m-1-s}v_i)
+\varphi_2(\partial^s_{x_r}H_j)\partial_{x_r}^{m-1-s}(v_{x_k})_i\\ &\quad\quad\quad-\varphi_2 \partial_{x_r}^s(v_{x_k})_j(\partial_{x_r}^{m-1-s}H_i) -\varphi_2(\partial^s_{x_r}v_j)
\partial_{x_r}^{m-1-s}(H_{x_k})_i\Big]- 2(\partial_j\varphi_2)(\partial_{x_r}^{m-1}(H_{x_k})_i),\\
G_1&=\partial_{j}\varphi_1\Big[\sum_{s=0}^{m-1}\binom{m-1}{s}\Big(
\partial_{x_r}^s\omega_j\partial_{x_r}^{m-1-s}v_i+\partial_{x_r}^sH_j
\partial_{x_r}^{m-1-s}J_i
-\partial_{x_r}^sv_j\partial_{x_r}^{m-1-s}\omega_i\\
&\quad\quad\quad-\partial_{x_r}^sJ_j\partial_{x_r}^{m-1-s}H_i
\Big)\Big]+(\partial_t\varphi_1+\Delta\varphi_1)(\partial_{x_r}^{m-1}\omega_i)\\
G_2&=\partial_{j}\varphi_2\Big[\sum_{s=0}^{m-1}\binom{m-1}{s}\Big(
\partial_{x_r}^s(H_{x_k})_j\partial_{x_r}^{m-1-s}v_i
+\partial_{x_r}^sH_j\partial_{x_r}^{m-1-s}(v_{x_k})_i
\\
&\quad\quad\quad-\partial_{x_r}^s(v_{x_k})_j\partial_{x_r}^{m-1-s}H_i
-\partial_{x_r}^sv_j\partial_{x_r}^{m-1-s}(H_{x_k})_i
\Big)\Big]
+(\partial_t\varphi_2+\Delta\varphi_2)[\partial_{x_r}^{m-1}(H_{x_k})_i].
\end{align*}
Then, following the process of the Step 3 of the above proof, we can obtain
\begin{align*}
&\quad\quad\quad\boxed{\mbox{the estimate}\, \|\nabla^{m-1} (\omega_i,(H_{x_k})_i)\|_{L_t^\infty L^{\infty}_x(Q_{\frac{1}{4}-[\frac{1}{2^3}(1-\frac1{2^{6m-6}})]}(0,0))}}\\&
\quad\overset{(1)}{\Longrightarrow}\boxed{\mbox{the estimate}\,\|\nabla^{m-1}\omega\|_{L_t^\infty C^{0,\alpha}_x(Q_{\frac{1}{4}-[\frac{1}{2^3}(1-\frac1{2^{6m-5}})]}(0,0))}}\\&
\quad\overset{(2)}{\Longrightarrow}\boxed{\mbox{the estimate}\,\|\nabla^{m}v\|_{L_t^\infty C^{0,\alpha}_x(Q_{\frac{1}{4}-[\frac{1}{2^3}(1-\frac1{2^{6m-4}})]}(0,0))}}\\&
\quad\overset{(3)}{\Longrightarrow}\boxed{\mbox{the estimate}\,\|\nabla^{m}H\|_{L_t^\infty C^{0,\alpha}_x(Q_{\frac{1}{4}-[\frac{1}{2^3}(1-\frac1{2^{6m-3}})]}(0,0))}}\\
&\quad\overset{(4)}{\Longrightarrow}\boxed{\mbox{the estimate}\,\|(\nabla^{m}\omega,\nabla^{m+1}H)\|_{L_t^\infty L^\infty_x(Q_{\frac{1}{4}-[\frac{1}{2^3}(1-\frac1{2^{6m-2}})]}(0,0))}}\\&
\quad\overset{(5)}{\Longrightarrow}\boxed{\mbox{the estimate}\,\|\nabla^{m}\omega\|_{L_t^\infty C^{0,\alpha}_x(Q_{\frac{1}{4}-[\frac{1}{2^3}(1-\frac1{2^{6m-1}})]}(0,0))}}\\&
\quad\overset{(6)}{\Longrightarrow}\boxed{\mbox{the estimate}\,\|\nabla^{m+1} v\|_{L_t^\infty L^{\infty}_x(Q_{\frac{1}{4}-[\frac{1}{2^3}(1-\frac1{2^{6m}})]}(0,0))}}.
\end{align*}
Notices that
\begin{align*}
\frac14-\Big(\frac1{2^4}+\frac1{2^5}+\cdots+\frac1{2^{6m+3}}\Big)
=\frac{1}{4}-[\frac{1}{2^3}(1-\frac1{2^{6m}})]
=\frac{1}{8}+\frac1{2^{6m+3}}>\frac18,
\end{align*}
which, along with Lemma \ref{theo.higher},  leads to that for any $m\geq 0$
\begin{align*}
&\|\nabla^m (v,H)\|_{L_t^\infty L^{\infty}_x(Q_{\frac{1}{8}}(0,0))}\nonumber\\
&\quad\lesssim \|v\|_{L_t^{\infty}L_x^{\infty}(Q_{\frac12}(0,0))}
+\|H\|_{L_t^{\infty}L_x^{\infty}(Q_{\frac12}(0,0))}
+\|\omega\|_{L^2_tL^2_x(Q_{\frac12}(0,0))}
+\|J\|_{L^2_tL^2_x(Q_{\frac12}(0,0))}.
\end{align*}
This concludes the proof of Lemma \ref{theo.higher}.
\end{proof}

\section*{Acknowledgments} The research of BL was partially supported by NSFC-$12371202$ and Hunan provincial NSF-
2022jj10032, 22A0057.

\addcontentsline{toc}{section}{\protect\numberline{}{References}}
\bibliographystyle{abbrv}\bibliography{Green2021}
\end{document}